\numberwithin{equation}{section}
\newtheorem{Theorem}{Theorem}[section]
\newtheorem*{Theorem*}{Theorem}
\newtheorem{Claim}[Theorem]{Claim}
 { \theoremstyle{definition}

\newtheorem*{Notation}{Notation}

\newtheorem{Remark}[Theorem]{Remark}

}
\newcommand{\chebpts}{K}
\pgfplotsset{compat=1.18}
\begin{document}

\allowdisplaybreaks

\renewcommand{\thefootnote}{}

\newcommand{\arXivNumber}{2304.04951}

\renewcommand{\PaperNumber}{005}

\FirstPageHeading

\ShortArticleName{Computing the Tracy--Widom Distribution for Arbitrary $\beta>0$}

\ArticleName{Computing the Tracy--Widom Distribution\\ for Arbitrary $\boldsymbol{\beta>0}$\footnote{This paper is a~contribution to the Special Issue on Evolution Equations, Exactly Solvable Models and Random Matrices in honor of Alexander Its' 70th birthday. The~full collection is available at \href{https://www.emis.de/journals/SIGMA/Its.html}{https://www.emis.de/journals/SIGMA/Its.html}}}

\Author{Thomas TROGDON and Yiting ZHANG}

\AuthorNameForHeading{T.~Trogdon and Y.~Zhang}

\Address{Department of Applied Mathematics, University of Washington, Seattle, Washington, USA}
\Email{\href{mailto:trogdon@uw.edu}{trogdon@uw.edu}, \href{mailto:yitinz91@uw.edu}{yitinz91@uw.edu}}

\ArticleDates{Received April 19, 2023, in final form January 03, 2024; Published online January 13, 2024}

\Abstract{We compute the Tracy--Widom distribution describing the asymptotic distribution of the largest eigenvalue of a large random matrix by solving a boundary-value problem posed by Bloemendal in his Ph.D.~Thesis (2011). The distribution is computed in two ways. The first method is a second-order finite-difference method and the second is a highly accurate Fourier spectral method. Since $\beta$ is simply a parameter in the boundary-value problem, any $\beta> 0$ can be used, in principle. The limiting distribution of the $n$th largest eigenvalue can also be computed. Our methods are available in the \textsc{Julia} package \texttt{TracyWidomBeta.jl}.}

\Keywords{numerical differential equation; Tracy--Widom distribution; Fourier transformation}

\Classification{65M06; 60B20; 60H25}

\renewcommand{\thefootnote}{\arabic{footnote}}
\setcounter{footnote}{0}

\section{Introduction}

Tracy and Widom~\cite{TRACY1993115,cmp/1104254495,cmp/1104286442} introduced the Tracy--Widom distribution that gives the limiting distribution of the rescaled largest eigenvalue of a random matrix taken from an appropriate symmetry class. More precisely, the largest eigenvalue $\lambda_{\max}$ satisfies the following fundamental limit
\[
\lim_{n\to \infty}\mathbb {P}\big(n^{1/6}(\lambda_{\max}(A_{n})-2\sqrt{n})\leq x\big)= F_{\beta}(x) \qquad \text{for all} \ x\in \mathbb {R},
\]
where $F_{\beta}$ is the Tracy--Widom distribution and $\beta=1$, $2$, $4$ if $A_{n}\sim$ Gaussian orthogonal ensemble, Gaussian unitary ensemble, Gaussian symplectic ensemble, respectively.

For an $n\times n$ Gaussian ensemble $A_n$ with ordered eigenvalues $\{\lambda_{j}\}_{j=1}^{n}$, the joint probability density function (jpdf) for its eigenvalues is given by
\begin{equation}
 \rho\left(\lambda_{1},\lambda_{2},\dots,\lambda_{n}\right)=Z^{-1}_{n,\beta}\prod_{i}{\rm e}^{-\beta \lambda^{2}_{i}/4}\prod_{i<j}\left|\lambda_{j}-\lambda_{i}\right|^{\beta},\qquad \lambda_{1}\geq \lambda_{2}\geq \cdots\geq \lambda_{n},
 \label{eqn:density}
\end{equation}
where $Z_{n,\beta}$ is the partition function. For $\beta =1,2,4$,~\eqref{eqn:density} is solvable: all correlation functions in finite dimensions can be explicitly calculated using Hermite polynomials. This provides definitive local limit theorems and establishes clear limits for the random points~\cite{Meh2004}. Importantly, \eqref{eqn:density}~also describes a one-dimensional Coulomb gas at inverse temperature $\beta$ for any $\beta > 0$. If $\beta\neq 1,2,4,6$, there are no known explicit formulae which appear amenable to asymptotic analysis (see, for example,~\cite{Grava2016,Li2018OnTO,Rumanov2016} for $\beta = 6$). One, in general, must resort to numerical computations, see~\cite{Bornemann2009OnTN} for $\beta=1,2,4$.

Dumitriu and Edelman~\cite{article} established that a family of symmetric tridiagonal (Jacobi) matrix models have~\eqref{eqn:density} as their eigenvalue density for any $\beta> 0$. More specifically, the matrix given~by%
\begin{equation}
 H_{n}^{\beta}=\frac {1}{\sqrt{\beta}}\begin{bmatrix}
 g & \chi_{(n-1)\beta} & & & \\
 \chi_{(n-1)\beta} & g & \chi_{(n-2)\beta} & & \\
 & \chi_{(n-2)\beta} & g & \ddots & \\
 & & \ddots & \ddots & \chi_{\beta}\\
 & & & \chi_{\beta} & g
 \end{bmatrix},
 \label{eqn:matrix}
\end{equation}
has~\eqref{eqn:density} as the jpdf of its eigenvalues. Here the entries are independent random variables, up to symmetry, $g\sim N(0, 2)$ and $\chi_{k}\sim$ Chi($k$). We call $H^{\beta}_{n}$ the $\beta$-Hermite ensemble.

Sutton and Edelman~\cite{article2, article3} then presented an argument\footnote{A version of this argument was first presented by Edelman at the SIAM Conference on Applied Linear Algebra held at the College of William $\&$ Mary in 2003.} describing how the spectrum of the rescaled operator
\[
 \tilde{H}_{n}^{\beta}=n^{1/6}\big(2\sqrt{n}I-H_{n}^{\beta}\big),
\]
where $I$ is the $n\times n$ identity matrix, should be described by the spectrum of the stochastic Airy operator
\[
 \mathcal {H}_{\beta}=-\frac {{\rm d}^2}{{\rm d}x^2}+x+\frac {2}{\sqrt{\beta}}b'_{x},
\]
as $n\to \infty$, where $b'_{x}$ is standard Gaussian white noise.

As a result, the following eigenvalue problem was considered in~\cite{10.2307/23072161}
\[
 \mathcal {H}_{\beta}f=\Lambda f \qquad \text{on} \ L^{2}(\mathbb {R}_{+})
\]
with a Dirichlet boundary condition $f(0)=0$. Ram\'irez, Rider, and Vir\'ag~\cite{10.2307/23072161} proved the following theorem.
\begin{Theorem}
 With probability one, for each $k\geq 0$, the set of eigenvalues of $\mathcal {H}_{\beta}$ has a well-defined $(k+1)$st lowest element $\Lambda_{k}$. Moreover, let $\lambda_{1}\geq \lambda_{2}\geq \cdots$ denote the eigenvalues of ${H}^{\beta}_{n}$. Then the vector
\begin{align*}
 \big(n^{1/6}(2\sqrt{n}-\lambda_{l})\big)_{l=1,\dots,k}
\end{align*}
converges in distribution to $(\Lambda_{0},\Lambda_{1},\dots,\Lambda_{k-1})$ as $n\to \infty$.
\end{Theorem}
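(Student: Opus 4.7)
I would proceed in two stages: first, realize $\mathcal{H}_\beta$ as a self-adjoint operator on $L^2(\mathbb{R}_+)$ whose spectrum is almost surely discrete and bounded below, so that the eigenvalues $\Lambda_0 < \Lambda_1 < \cdots$ are well-defined; second, identify $\tilde{H}_n^\beta$ with a discrete analogue of $\mathcal{H}_\beta$ on the mesh $x_i = i n^{-1/3}$ and establish joint convergence in distribution of the finitely many lowest (i.e., top, after the sign flip) eigenvalues via matching variational characterizations.

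For stage one, the operator $\mathcal{H}_\beta$ cannot be defined pointwise because $b_x'$ is a distribution, so I would work entirely with the associated quadratic form. On smooth compactly supported test functions $f$ with $f(0)=0$, formally integrating the noise term by parts against the continuous Brownian path yields
\[
\langle f,\mathcal{H}_\beta f\rangle = \int_0^\infty \bigl((f'(x))^2 + x f(x)^2\bigr)\, \mathrm{d}x - \frac{4}{\sqrt{\beta}}\int_0^\infty f(x)f'(x)\, b_x\, \mathrm{d}x,
\]
the last integral being an ordinary Lebesgue integral. I would take the form domain $\mathcal{D}$ to be the completion of such test functions in the norm $\|f\|_{\mathcal{D}}^2 = \|f'\|_{L^2}^2 + \int_0^\infty (1+x) f(x)^2\, \mathrm{d}x$, and use the almost-sure path bound $|b_x|\leq C_\omega (1+x)^{1/2+\varepsilon}$ to control the noise term by $\varepsilon\|f\|_{\mathcal{D}}^2 + C_{\omega,\varepsilon}\|f\|_{L^2}^2$. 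This yields closedness and semiboundedness of the form, hence a self-adjoint operator with compact resolvent. The min-max principle then defines
\[
\Lambda_k = \inf_{\substack{S\subset\mathcal{D}\\ \dim S = k+1}}\ \sup_{\substack{f\in S\\ \|f\|_{L^2}=1}} \langle f,\mathcal{H}_\beta f\rangle,
\]
so almost surely $\Lambda_k$ exists for every $k$.

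For stage two, I would write the eigenvalue equation for $\tilde H_n^\beta$ on the rescaled grid $x_i = i n^{-1/3}$, $\Delta x = n^{-1/3}$. A Taylor expansion of $\chi_{(n-i)\beta}/\sqrt{\beta}$ about its mean splits each off-diagonal entry into a deterministic part $\sqrt{n-i} = \sqrt{n} - x_i/(2\sqrt n) + O(n^{-1})$ and a fluctuation of variance $1/(2\beta)$. Substituting this into the Rayleigh quotient on a piecewise-affine interpolation of $(v_1,\dots,v_n)$, the deterministic part produces the discrete Dirichlet energy plus the potential $\int x f(x)^2\,\mathrm{d}x$, while Donsker's invariance principle applied to the partial sums of the Chi fluctuations produces the Brownian motion $b_x$ in the noise term. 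I would then prove (i) tightness of near-minimizing discrete eigenfunctions in the form norm, and (ii) a deterministic Agmon-type decay estimate showing that both discrete and continuous near-eigenfunctions decay like $\exp(-c x^{3/2})$ on a tail $[L,\infty)$ on an event of probability $1-\varepsilon$. Truncating both problems to a common compact interval $[0,L]$ and Skorokhod-coupling the noise then yields matching $\limsup$ and $\liminf$ bounds on each min-max value, giving joint distributional convergence of $(\Lambda_0^{(n)},\dots,\Lambda_{k-1}^{(n)})$ to $(\Lambda_0,\dots,\Lambda_{k-1})$.

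The main obstacle is the uniform tail control just described: the linear potential $x$ only just dominates the noise fluctuations $b_x \sim x^{1/2+\varepsilon}$, so standard Agmon estimates do not apply verbatim, and one must exploit cancellations (for instance by multiplying the eigenfunction equation by an Airy-type exponential weight and integrating) to obtain tail bounds that are simultaneously uniform in $n$ for the discrete problem and in the noise realization on a high-probability event. Once this tail decay is in place, the remainder of the argument consists of routine variational-convergence arguments combined with continuous mapping through the min-max functional.
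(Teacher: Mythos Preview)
The paper does not give its own proof of this theorem: it is quoted verbatim as a result of Ram\'irez, Rider, and Vir\'ag~\cite{10.2307/23072161}, and the paper's contribution lies elsewhere (the numerical solution of the Bloemendal--Vir\'ag boundary-value problem). So there is no in-paper argument for you to match.

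That said, your plan is essentially the strategy of~\cite{10.2307/23072161}: realize $\mathcal{H}_\beta$ through the closed, semibounded quadratic form obtained by integrating the white-noise term by parts, use the almost-sure growth bound on $b_x$ to make the form a relatively bounded perturbation of $\int((f')^2+xf^2)$, deduce compact resolvent and the min--max description of $\Lambda_k$, and then prove convergence of the rescaled tridiagonal eigenvalues by matching discrete and continuous variational problems together with a tightness/tail-decay argument. Your identification of the uniform Agmon-type tail control as the crux is accurate; in~\cite{10.2307/23072161} this is handled not by an exponential weight but through oscillation/Riccati arguments and a careful comparison of the discrete and continuous Rayleigh quotients on a common high-probability event, which you may find cleaner than the weighted-integration route you suggest. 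Either way, the outline is sound and aligned with the original source the paper cites.
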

Ram\'irez, Rider, and Vir\'ag showed that the distribution of $-\Lambda_{0}$ is a consistent definition of Tracy--Widom($\cdot$) for general $\beta>0$.
Based on~\cite{10.2307/23072161, article6}, Bloemendal and Vira\'g~\cite{Bloemendal2011FiniteRP,article7} considered a generalized eigenvalue problem
\[
 \mathcal {H}_{\beta}f=\Lambda f \qquad \text{on} \ L^{2} (\mathbb {R}_{+} )
\]
with boundary condition $f'(0)=\omega f(0)$, where $\omega\in \mathbb {R}$ represents a scaling parameter. Let~$\mathcal {H}_{\beta,\omega}$ denote $\mathcal {H}_{\beta}$ together with this boundary condition. According to~\cite{10.2307/23072161}, the distribution $F_{\beta,\omega}$ of~$-\Lambda_{0}$ in the case $\omega=+\infty$ thus coincides with Tracy--Widom for general $\beta>0$.

Bloemendal and Vir\'ag~\cite{article7} further showed that $F=F_{\beta,\omega}$ can be characterized using the solution of a boundary value problem, as we now discuss in the following section. We also point out that Bloemendal~\cite{Bloemendal2011FiniteRP} provided \textsc{Mathematica} code to approximate $F$. The goal of the current work is to expand and improve upon this scheme.

This paper is laid out as follows. In Section~\ref{s:description}, we outline our two algorithms to compute the Tracy--Widom distribution. In Section~\ref{s:valid}, we validate and compare our methods. In Section~\ref{s:results}, we present a number of additional numerical results. We also include two appendices to discuss some nuances in the numerical algorithms (Appendix~\ref{a:2}) and to discuss the large $\beta$ limit (Appendix~\ref{a:1}). Code to produce all figures in this paper can be found here~\cite{tracywidombeta}.

\section{Algorithm description}\label{s:description}

The Tracy--Widom distribution function $F_{\beta}$ can be characterized as follows~\cite{Bloemendal2011FiniteRP}. Consider
\begin{equation}
\frac {\partial F}{\partial x}+\frac {2}{\beta}\frac {\partial^{2} F}{\partial \omega^{2}}+\big(x-\omega^2\big)\frac {\partial F}{\partial \omega}=0\qquad \text{for} \ (x,\omega)\in \mathbb {R}^{2},
\label{eqn:bv}
\end{equation}
with boundary conditions given by
\begin{align*}
&F(x,\omega)\to 1\qquad \text{as} \ x,\omega\to \infty \ \text{together,}\\
&F(x,\omega)\to 0\qquad \text{as} \ \omega\to -\infty \ \text{with}x \ \text{bounded above.}
\end{align*}
Then~\cite[Theorem 1.7]{article7}
\begin{equation}
 F_{\beta}(x)=\lim_{\omega\to \infty}F(x,\omega).
 \label{eqn:9}
\end{equation}
Using $\omega=-\cot \theta$, we rewrite~\eqref{eqn:bv} as
\begin{equation}
\frac {\partial H}{\partial x}+\left(\frac {2}{\beta}\sin^{4} \theta\right)\frac {\partial^{2} H}{\partial \theta^{2}}+\left[\left(x+\frac {2}{\beta}\sin 2\theta\right)\sin^{2}\theta-\cos^{2}\theta\right]\frac {\partial H}{\partial \theta}=0,
\label{eqn:bv1}
\end{equation}
with boundary condition
\begin{align*}
H(x,0)=0.
\end{align*}
To address the boundary condition as $x\to\infty$ and $\theta\to \pi$, we truncate the domain at a finite value, denoted as $x=x_{0}>0$. We then employ the Gaussian asymptotics established by Bloemendal~\cite[Theorem 4.1.1]{Bloemendal2011FiniteRP} to obtain the following approximate asymptotic initial con\-di\-tion~\mbox{\cite[p.~103]{Bloemendal2011FiniteRP}}:
\begin{align}
H(x_{0},\theta)=\begin{cases}
\Phi\left(\dfrac {x_{0}-\cot^{2}\theta}{\sqrt{(4/\beta)\cot \theta}}\right), &0\leq \theta\leq \pi/2,\\
1, &\pi/2\leq \theta.
\end{cases}\label{initialc}
\end{align}
Here $\Phi$ denotes the standard normal distribution function. Note that $H(x_{0},\theta)$ is continuous in both $x_{0}>0$ and $\theta\geq 0$. One then approximates the undeformed Tracy--Widom distribution~$F_{\beta}(x)$ at $\theta=\pi$ by $\omega=-\cot \theta$ and~\eqref{eqn:9}, i.e., $F_{\beta}(x)\approx H(x,\pi)$.

\subsection{Finite-difference discretization}
One way to solve this boundary-value problem is by discretizing it using finite differences, see, for example,~\cite{10.5555/1355322}. For $0\leq n\leq N$, $0\leq m\leq M$, define
\[
x_{n}=x_{0}+n\Delta x,\qquad \theta_m=mh.
\]
Given that $\beta>0$, one integrates equation~\eqref{eqn:bv1} backward in ``time'' with respect to the time-like variable $x$ to guarantee its well-posedness: $\Delta x < 0$. Denote the approximation of~$H(x,\theta_{m})$ by~$H_{m}(x)$. We then replace partial derivatives of $H$ with respect to $\theta$ by centered differences, with grid spacing $h$. The method of lines formulation reads
\begin{equation}
 \frac {\partial \pmb{H}_{M}}{\partial x}=-\bigg(\frac {2\sin^{4} \theta}{\beta h^2}\bigg)\check T\pmb{H}_{M}+\frac {\big[\big(x+\frac {2}{\beta}\sin 2\theta\big)\sin^{2}\theta-\cos^{2}\theta\big]}{2h}\check U\pmb{H}_{M},
 \label{eqn:fd}
\end{equation}
where $\pmb{H}_{M}(x)= [H_{1}(x),H_{2}(x),\dots,H_{M}(x) ]^{\textsf{T}}$, $Mh=\theta_{M}$, and
\begin{equation}
 \check T=\begin{bmatrix}
-2 & \hphantom{-} 1\\
\hphantom{-}1 & -2 & \hphantom{-}1 \\
& \hphantom{-}1 & -2 & 1\\
&& \ddots & \ddots & \ddots \\
&&&1 & -2 & \hphantom{-}1 \\
&&&& \hphantom{-}1 & -2 \end{bmatrix}, \qquad \check U=\begin{bmatrix}
0 & -1\\
1 & \hphantom{-}0 & -1 \\
& \hphantom{-}1 & \hphantom{-}0 & -1\\
&& \ddots & \ddots & \ddots \\
&&&\hphantom{-}1 & 0 & -1 \\
&&&-1 & 4 & -3 \end{bmatrix}.
\label{eqn:mm}
\end{equation}
$\pmb{H}_{M}$ excludes $H_{0}$ since $H_{0}(x)=0$. The coefficients in the last row of $\check U$ come from the parameters of the two-step backward difference formula (BDF2). Note that there is no need to replace the last row of $\check T$ using a backward difference formula since $\theta_{M}=\pi$ and, mathematically, there is no need to set an additional boundary condition since~\eqref{eqn:fd} has vanishing diffusivity for $\theta = \pi$.

Let
\begin{equation}
 T(\beta,\pmb{\theta}_{M},h):=-\bigg(\frac {2\sin^{4} \pmb{\theta}_{M}}{\beta h^2}\bigg)\check T=\begin{bmatrix}
\dfrac{-2\sin^{4} \theta_{1}}{\beta h^2} & &\\
 & \ddots & \\
& & \dfrac{-2\sin^{4} \theta_{M}}{\beta h^2} \end{bmatrix}\check T,
 \label{eqn:matri}
\end{equation}
and
\begin{align}
 &U(\beta,x,\pmb{\theta}_{M},h):=\frac {\big[\big(x+\frac {2}{\beta}\sin 2\pmb{\theta}_{M}\big)\sin^{2}\pmb{\theta}_{M}-\cos^{2}\pmb{\theta}_{M}\big]}{2h}\check U \nonumber \\
 &=\begin{bmatrix}
\dfrac{(x+\frac {2}{\beta}\sin 2\theta_1)\sin^{2}\theta_{1}-\cos^{2}\theta_{1}}{2h} & &\\
 & \ddots & \\
& & \dfrac{(x+\frac {2}{\beta}\sin 2\theta_M)\sin^{2}\theta_{M}-\cos^{2}\theta_{M}}{2h} \end{bmatrix}\check U,
 \label{tt2}
\end{align}
where $\pmb{\theta}_{M}= [\theta_{1},\theta_{2},\dots,\theta_{M} ]^{\textsf{T}}$. Then
\begin{equation}
\frac {\partial \pmb{H}_{M}}{\partial x}=\left[T(\beta,\pmb{\theta}_{M},h)+U(\beta,x,\pmb{\theta}_{M},h)\right]\pmb{H}_{M}.
 \label{eqn:fi}
\end{equation}
Noting that $x$ is the time-like variable, we apply the trapezoidal rule with time step $\Delta x<0$ to~\eqref{eqn:fi} yielding
\begin{gather*}
 \frac {\pmb{H}^{n+1}_{M}-\pmb{H}^{n}_{M}}{\Delta x}= \frac {1}{2}\big[T(\beta,\pmb{\theta}_{M},h)\pmb{H}^{n}_{M}+U(\beta,x_{n},\pmb{\theta}_{M},h)\pmb{H}^{n}_{M}\big]\\
 \hphantom{\frac {\pmb{H}^{n+1}_{M}-\pmb{H}^{n}_{M}}{\Delta x}=}{}
 +\frac {1}{2}\big[T(\beta,\pmb{\theta}_{M},h)\pmb{H}^{n+1}_{M}+U(\beta,x_{n+1},\pmb{\theta}_{M},h)\pmb{H}^{n+1}_{M}\big],
\end{gather*}
where $\pmb{H}^{n}_{M}\approx \pmb{H}_{M}(x_{n})$. Upon rearranging, this gives
\begin{gather}
 \bigg[I-\frac {\Delta x}{2}T(\beta,\pmb{\theta}_{M},h)-\frac {\Delta x}{2}U(\beta,x_{n+1},\pmb{\theta}_{M},h) \bigg] \pmb{H}^{n+1}_{M} \nonumber\\
 \quad\qquad= \bigg[I+\frac {\Delta x}{2}T(\beta,\pmb{\theta}_{M},h)+\frac {\Delta x}{2}U(\beta,x_{n},\pmb{\theta}_{M},h)\bigg]\pmb{H}^{n}_{M}.\label{trap2}
\end{gather}
Finally, we obtain $F_{\beta}(x)\approx \tilde{F}_{\beta}(x):=H_{M}(x) \approx H^n_M$, $n=0,1,\dots, N$. Below, we also explore other time integration methods, in addition to the trapezoidal method. We use the trapezoidal method as our default due to its A-stability, but it can be outperformed by BDF methods in this context.

\subsection{Spectral discretization}
To obtain better accuracy, we apply a Fourier spectral method. As before, for $0\leq n\leq N$, define $x_{n}=x_{0}+n\Delta x$. Suppose $H(x,\theta)=\int_{0}^{\theta} \rho(x,\theta')\, {\rm d}\theta'$, then we rewrite~\eqref{eqn:bv1} as
\[
 \frac {\partial H}{\partial x}+\bigg(\frac {2}{\beta}\sin^{4} \theta\bigg)\frac {\partial \rho}{\partial \theta}+\bigg[\bigg(x+\frac {2}{\beta}\sin 2\theta\bigg)\sin^{2}\theta-\cos^{2}\theta\bigg]\rho=0.
\]
Upon taking the derivative with respect to $\theta$ on both sides, we arrive at a partial differential equation for $\rho(x,\theta)$,
\begin{align}
\frac {\partial \rho}{\partial x}&+\bigg(\frac {8}{\beta}\sin^{3}\theta \cos\theta\bigg)\frac {\partial \rho}{\partial \theta}+\bigg(\frac {2}{\beta}\sin^{4} \theta\bigg)\frac {\partial^{2} \rho}{\partial \theta^{2}} \nonumber\\
&+\bigg[(2x+2)\sin \theta \cos \theta+\frac {2}{\beta}\big(2\sin^{2}\theta \cos {2\theta}+2\sin \theta\cos \theta \sin {2\theta}\big)\bigg]\rho \nonumber\\
&+\bigg[\bigg(x+\frac {2}{\beta}\sin 2\theta\bigg)\sin^{2}\theta-\cos^{2}\theta\bigg]\frac {\partial \rho}{\partial \theta}=0.\label{spect}
\end{align}
Now, suppose
\begin{equation}
 \rho(x,\theta)\approx \sum_{m=-M}^{M}a_{m}(x){\rm e}^{2{\rm i}m\theta/l},\qquad \theta\in[0,l\pi),\quad \theta_{M}=l\pi.
 \label{eqn:perio}
\end{equation}
Substituting~\eqref{eqn:perio} in~\eqref{spect} gives a system of ordinary differential equations for $a_{m}(x)$, $m=-M,\dots,M$, after truncation
\begin{equation}
\frac {{\rm d} \pmb{a}_{M}(x)}{{\rm d} x}=(A+xB)\pmb{a}_{M}(x),\qquad\pmb{a}_{M}(x)=\begin{bmatrix}
a_{-M}(x)\\
\vdots\\
a_{M}(x) \end{bmatrix},
\label{eqn:bdf}
\end{equation}
where
\begin{align}
A={}&-\frac {2}{\beta}\bigg(\frac {3}{8}I-\frac {1}{4}S_{-l}-\frac {1}{4}S_{+l}+\frac {1}{16}S_{+2l}+\frac {1}{16}S_{-2l}\bigg)D_{2} \nonumber \\
&{}+\bigg[\frac {1}{2}I+\frac {1}{4}S_{-l}+\frac {1}{4}S_{+l}-\frac {2}{\beta}\bigg(\frac {1}{2{\rm i}}S_{-l}-\frac {1}{2{\rm i}}S_{+l}\bigg)\bigg(\frac {1}{2}I-\frac {1}{4}S_{+l}-\frac {1}{4}S_{-l}\bigg)\bigg]D_{1} \nonumber\\
&{}-\bigg[\frac {8}{\beta}\bigg(\frac {1}{-8{\rm i}}S_{+3l/2}+\frac {1}{8{\rm i}}S_{-3l/2}+\frac {3}{8{\rm i}}S_{-l/2}-\frac {3}{8{\rm i}}S_{+l/2}\bigg)\bigg(\frac {1}{2}S_{-l/2}+\frac {1}{2}S_{+l/2}\bigg)\bigg]D_{1}\nonumber\\
&{}-\frac {4}{\beta}\bigg(\frac {1}{2}I-\frac {1}{4}S_{+l}-\frac {1}{4}S_{-l}\bigg)\bigg(\frac {1}{2}S_{+l}+\frac {1}{2}S_{-l}\bigg)\nonumber\\
&{}-\frac {4}{\beta}\bigg(\frac {1}{2{\rm i}}S_{-l/2}-\frac {1}{2{\rm i}}S_{+l/2}\bigg)\bigg(\frac {1}{2}S_{+l/2}+\frac {1}{2}S_{-l/2}\bigg)\bigg(\frac {1}{2{\rm i}}S_{-l}-\frac {1}{2{\rm i}}S_{+l}\bigg)\nonumber\\
&{}-2\bigg(\frac {1}{2{\rm i}}S_{-l/2}-\frac {1}{2{\rm i}}S_{+l/2}\bigg)\bigg(\frac {1}{2}S_{+l/2}+\frac {1}{2}S_{-l/2}\bigg),\label{A}
\end{align}
and
\begin{equation}
B=\bigg(-\frac {1}{2}I+\frac {1}{4}S_{-l}+\frac {1}{4}S_{+l}\bigg)D_{1}-2\bigg(\frac {1}{2{\rm i}}S_{-l/2}-\frac {1}{2{\rm i}}S_{+l/2}\bigg)\bigg(\frac {1}{2}S_{+l/2}+\frac {1}{2}S_{-l/2}\bigg).
\label{eqn:B}
\end{equation}
Here $S_{\pm k}$ represent the (Fourier modes) shift matrices, $S_{\pm k}=S^{k}_{\pm 1}$, where
\begin{align*}
 S_{+1}=\begin{bmatrix}
0 & 1 & & & & 0\\
& 0 & 1 \\
& & 0 & 1\\
&& & \ddots & \ddots \\
&&& & 0 & 1 \\
1&&&& & 0 \end{bmatrix},\qquad S_{-1}=\begin{bmatrix}
0 &&&&& 1\\
1 & 0 & &&\\
0 &1 & 0 & &\\
&& \ddots & \ddots & \\
&&&1 & 0 & 0 \\
0&&&& 1 & 0 \end{bmatrix}.
\end{align*}
Also $D_{1}$ and $D_{2}$ represent the first and second-order differentiation matrices in the Fourier space, i.e., $D_{2}=D^{2}_{1}$, where
\begin{align*}
 D_{1}=\begin{bmatrix}
-2{\rm i}M/l &&&& \\
 & -2{\rm i}(M-1)/l & &&\\
&& \ddots & & \\
&&& 2{\rm i}(M-1)/l & \\
&&&& & 2{\rm i}M/l \end{bmatrix}.
\end{align*}
The Fourier coefficients of the initial condition are obtained via
\begin{equation}
 a_{m}(x_{0})=\frac {1}{l\pi}\int_{0}^{l\pi}\rho(x_{0},\theta){\rm e}^{-2{\rm i}m\theta/l}\, {\rm d}\theta.
 \label{eqn:eq2}
\end{equation}
Instead of applying the second-order accurate trapezoidal rule to integrate the system of ODEs for $\rho(x,\theta)$, we suggest the use of a five-step backward differentiation formula method (BDF5), see~\cite[Section 8.4]{10.5555/1355322}. To use BDF5, we need four more starting conditions, i.e., $\pmb{a}_{M}(x_{0}-i\Delta x)$, $i=1,2,3,4$, which can be obtained by~\eqref{initialc} and~\eqref{eqn:eq2}.
We then proceed with BDF5 to solve for $\pmb{a}^{n}_{M}\approx\pmb{a}_{M}(x_{n})$
\begin{align}
 \left[137I-60\Delta x \left(A+x_{n}B\right)\right]\pmb{a}^{n}_{M}= 300\pmb{a}^{n-1}_{M}&-300\pmb{a}^{n-2}_{M}
 +200\pmb{a}^{n-3}_{M}
 -75\pmb{a}^{n-4}_{M}+12\pmb{a}^{n-5}_{M}.\label{spect2}
\end{align}
Other time integration methods can be used, and this will be discussed further below.

Finally, the value of $H(x,\theta)$ can be recovered from
\[
 H(x,\theta)\approx\int_{0}^{\theta}\sum_{m=-M}^{M}a_{m}(x){\rm e}^{2{\rm i}m\theta'/l}\, {\rm d}\theta'=\sum_{m=-M}^{M}a_{m}(x)\int_{0}^{\theta}{\rm e}^{2{\rm i}m\theta'/l}\, {\rm d}\theta'.
\]
The Tracy--Widom distribution can then be approximated by setting $\theta=\pi$,
\[
F_{\beta}(x)\approx\tilde{F}_{\beta}(x)=\sum_{m=-M}^{M}a_{m}(x)\int_{0}^{\pi}{\rm e}^{2{\rm i}m\theta'/l}\, {\rm d}\theta'=\sum_{m=-M}^{M}a_{m}(x)\frac {l}{2{\rm i}m}\big({\rm e}^{2{\rm i}m\pi/l}-1\big),
\]
from which we find
\[
 F_{\beta}(x_n)\approx\tilde F_\beta(x_n) \approx \sum_{m=-M}^{M}a^{n}_{m}\frac {l}{2{\rm i}m}\big({\rm e}^{2{\rm i}m\pi/l}-1\big),\qquad n=0,1,\dots,N.
\]

\section{Algorithm validation and comparison}\label{s:valid}

At this point, we have approximated the values of $F_{\beta}(x)$ on equally-spaced grid points $x_{0},x_{1},\allowbreak \ldots,x_{N}$. To obtain a continuous function, we perform Fourier interpolation. Since Fourier interpolation has high accuracy when the function being interpolated is periodic, we define
\[
 \varphi(x)=\frac {\mathrm{erf}(x)+1}{2},\qquad x\in \mathbb {R},
\]
where $\mathrm{erf}$ denotes the error function~\cite{NIST:DLMF}. Then consider
\[
 G_{\beta}(x):=\tilde{F}_{\beta}(x)-\varphi(x),
\]
which is nearly a periodic function. Note that other functions can also be used instead of the error function $\mathrm{erf}(x)$ in constructing a periodic function. We perform Fourier interpolation to~$G_{\beta}(x)$ on the grid $x_{1},\dots,x_{N}$. The resulting interpolant is then evaluated on a shifted and scaled Chebyshev grid on $[x_{N},x_{0}]$. By adding back in $\varphi(x)$ evaluated on the same Chebyshev grid and then interpolating with Chebyshev polynomials, we obtain a useful high-accuracy approximation of $F_{\beta}(x)$. The number of Chebyshev coefficients is user-decided (we use $10^3$ by default).

We point out that using either~\eqref{eqn:fd} or~\eqref{eqn:bdf}, we can also obtain the approximation of $F'_{\beta}(x)$ on the grid $x_{0},x_{1},\ldots,x_{N}$ at nearly no extra cost. We then perform the same procedure of interpolation to get an approximation of $F'_{\beta}(x)$, without using $\varphi(x)$ since the function being interpolated is already nearly periodic.

Algorithm~\ref{alg:11} shows the pseudocode for the Fourier interpolation, where $\texttt{T}$ is the grid points $x_{N},x_{N-1},\dots,x_{0}$ with grid spacing $\Delta x$, $\chebpts$ is an integer that gives the number of Chebyshev coefficients, $\texttt{F}$ is the value of $\tilde{F}_{\beta}(x_{j})$, $j=1,\dots,N$, and $\texttt{f}$ is the value of $\tilde{F}'_{\beta}(x_{j})$. We think of~\texttt{F},~\texttt{f} as functions on \texttt{T} and need to extend them to all of $[x_{N},x_{0}]$.

\begin{algorithm}
 \caption{\texttt{Fourier}$\underline{~}$\texttt{interp}(\texttt{T}, $\chebpts$, \texttt{F}, \texttt{f})} \label{alg:11}
 \begin{algorithmic}[1]
 \State Set $\texttt{D=}[\min\texttt{T},\max\texttt{T}]$
 \State Set $\texttt{G}(\texttt{T})\texttt{=F}(\texttt{T})\texttt{-}\varphi(\texttt{T})$
\State Perform Fourier interpolation to $\texttt{G}(\texttt{T})$ and \texttt{f}(\texttt{T}) to obtain the interpolant $\texttt{G}(x)$ and $\texttt{f}(x)$, $x\in \texttt {D}$
\State Set $\tilde{\texttt{F}}\texttt{=G}(\bar{x})\texttt{+}\varphi(\bar{x})$ and $\tilde{\texttt{f}}(\bar{x})\texttt{=f}(\bar{x})$, where $\bar{x}$ is a Chebyshev grid on $\texttt{D}$ with $\chebpts$ points
\State Perform Chebyshev interpolation to $\tilde{\texttt{F}}$ and $\tilde{\texttt{f}}$ and return the interpolants
	\end{algorithmic}
\end{algorithm}

For the finite-difference discretization using trapezoidal method to compute either the cumulative distribution function (cdf) or the probability density function (pdf) of $F_{\beta}$ using \texttt{TW}$(\beta)$, as provided in {\tt TracyWidomBeta.jl}, the following default values for the parameters are used:
\begin{equation}
 x_{0}=\bigg\lfloor \frac {13}{\sqrt{\beta}} \bigg\rfloor ,\quad x_{N}=-10,\quad \Delta x=-10^{-3},\quad \theta_{M}=\pi,\quad \chebpts=10^3,\quad M=10^3,
 \label{eqn:paraf}
\end{equation}
where $\lfloor \cdot \rfloor$ denotes the floor function.

\begin{Notation} Throughout we use \texttt{TW}($\beta$; \texttt{params}) to refer to our implementation with different choices of parameters. For example,
\[
\text{\texttt{TW}$(\beta; \texttt{method="finite"}, \texttt{step="trapz"}, \texttt{pdf=true})$},
\]
refers to using the finite-difference discretization, time-stepped with the trapezoidal method, outputting an approximation of $F'_\beta$ with the default parameters~\eqref{eqn:paraf}.
\end{Notation}

The default values of $x_{0}$ and $x_{N}$ are chosen to be $\lfloor 13/\sqrt{\beta} \rfloor$ and $-10$ respectively so that $F_{\beta}(x_{0})\approx 1$ and $F_{\beta}(x_{N})\approx 0$ for $\beta\geq 1$. Though not optimal, larger values of $x_{0}$ and smaller values of $x_{N}$ can also be used. See Section~\ref{s:x0} for a discussion on the selection of the default value for $x_{0}$. For $0<\beta<1$, a larger domain for $x$ should be used. The values of $\Delta x$ and $M$ are chosen so that $M= \lfloor{-1/\Delta x}\rfloor$. In this way, the local truncation error of the trapezoidal method is of the optimal order. Algorithm~\ref{alg:1} shows the pseudocode for \texttt{TW}$(\beta; \texttt{pdf=true})$. Step 6 may be replaced by solving an alternate discretization of~\eqref{eqn:fi}.
\begin{algorithm}
 \caption{\texttt{TW}($\beta$; \texttt{pdf=true})}
 \begin{algorithmic}[1]
 \State Set the values of $x_{0}$, $x_{N}$, $\Delta x$, $\theta_{\text{M}}$, \chebpts, and M as in~\eqref{eqn:paraf}
 \State Set $x\texttt{=}x_{0}:\Delta x:x_{N}$
 \State Set up the initial condition as in~\eqref{initialc}
 \State Set up the matrices as in (\ref{eqn:mm}), (\ref{eqn:matri}), and~\eqref{tt2}
		\For {$\texttt{m=}1,\ldots,\texttt{length}(x)\texttt{-}1$}
			\State Solve~\eqref{trap2} and denote the result by $\texttt{H}$
 \State Use~\eqref{eqn:fd} and denote the result by $\texttt{h}$
		\EndFor
 \State Take the values of $\texttt{H}$ and $\texttt{h}$ for $\theta\texttt{=}\pi$ and denote the results by $\texttt{F}$ and $\texttt{f}$
 \State Perform Algorithm~\ref{alg:11} with $\texttt{F}$, \texttt{f}, and $\texttt{T=}x$
 \State Return the interpolant for $\tilde{\texttt{f}}$
	\end{algorithmic}
 \label{alg:1}
\end{algorithm}

Similarly, for the spectral discretization using BDF5 method to compute either the cdf or the pdf using
\begin{gather*}
\begin{split}
&\text{\texttt{TW}$(\beta; \texttt{method="spectral"}, \texttt{step="bdf5"})$} \quad \text{or}\\
&\text{\texttt{TW}$(\beta; \texttt{method="spectral"}, \texttt{step="bdf5"}, \texttt{pdf=true})$},
\end{split}
\end{gather*}
 the following values for the parameters are used:
\begin{gather}
 x_{0}=\bigg\lfloor \frac {13}{\sqrt{\beta}} \bigg\rfloor,\quad x_{N}=-10,\quad \Delta x=-10^{-3},\quad \theta_{M}=20\pi,\quad \chebpts=10^3,\quad M=8\times 10^{3}.
 \label{eqn:paras}
\end{gather}
One thing to note here is that $\theta_{M}$ is set to be $20\pi$, which is much larger than $\pi$ as used for finite-difference discretization. This setting is necessary since a periodic boundary condition~\eqref{eqn:perio} is imposed on $\rho(x,\theta)$, or equivalently, on $H(x,\theta)$. If the length of the domain, $\theta_{M}$, is not large enough, due to periodicity, the approximation to $H(x,\theta)$ will propagate to the end of the domain and reappear at the lower boundary $\theta=0$. In other words, $\theta_{M}$ depends on the speed of propagation of the approximation to $H(x,\theta)$, and it turns out that setting $\theta_{M}=20\pi$ is sufficient. $M$ is set to be $8\times 10^3$ regardless of the value of $\Delta x$ to make sure we have large enough number of Fourier modes to represent the initial condition. See Section~\ref{s:22} for details on which value of $M$ to use for each method in terms of both accuracy and computation time. Algorithm~\ref{alg:2} shows the pseudocode for \texttt{TW}$(\beta; \texttt{method="spectral"}, \texttt{step="bdf5"}, \texttt{pdf=true})$. As with the finite-difference method, step 11 can be replaced with solving an alternate discretization of~\eqref{eqn:bdf}.
\begin{algorithm}
 \caption{\texttt{TW}($\beta$; \texttt{method="spectral"}, \texttt{step="bdf5"}, \texttt{pdf=true})}
 \begin{algorithmic}[1]
 \State Set the values of $x_{0}$, $x_{N}$, $\Delta x$, $\theta_{\text{M}}$, \chebpts, and M as in~\eqref{eqn:paras}
 \State Set $x\texttt{=}x_{0}:\Delta x:x_{N}$
 \State Set \texttt{D=}$0:h:\theta_{\text{M}}-h$ with $h\texttt{=}\theta_{\text{M}}/\text{M}$
 \State Set up the initial conditions using~\eqref{initialc} and~\eqref{eqn:eq2}.
 \State Construct matrices \texttt{A} and \texttt{B} as in~\eqref{A} and~\eqref{eqn:B}
 \State Set $\texttt{N=-}\texttt{floor}(\text{M}/2):1:\texttt{floor}(({\text{M}-}1)/2)$
\For {$\texttt{m} \in \texttt{N}$}
\State Compute the vector $\texttt{integ=}\frac {l}{2{\rm i}m}\big({\rm e}^{2{\rm i}m\pi/l}\texttt{-}1\big)$ with $l\texttt{=}\theta_{\text{M}}/\pi$
\EndFor
 \For {$\texttt{k=}2:\texttt{length}(x)$}
 \State Solve~\eqref{spect2} to find $\textbf{a}^{n}_{\text{M}}$
 \State Use~\eqref{eqn:bdf} and denote the result by $\textbf{b}^{n}_{\text{M}}$
 \EndFor
 \State Compute \texttt{F=}$\big\langle\textbf{a}^{n}_{\text{M}},\texttt{integ}\big\rangle$ and \texttt{f=}$\big\langle\textbf{b}^{n}_{\text{M}},\texttt{integ}\big\rangle$
 \State Perform Algorithm~\ref{alg:11} with $\texttt{F}$, \texttt{f}, and $\texttt{T=}x$
 \State Return the interpolant for $\tilde{\texttt{f}}$
	\end{algorithmic}
 \label{alg:2}
\end{algorithm}

\begin{Remark}
To optimize the use of the spectral method, one should consider $\theta$ from the interval $[k_1(x) \pi, k_2(x) \pi]$, where $k_1$, $k_2$ are $x$-dependent integers that adapt to where the solution is non-zero, see Figure~\ref{fig:pp}.
\end{Remark}

Note that when $\beta$ is very large, numerical instabilities can occur. This instability arises primarily because the initial condition closely resembles a step function. To get an accurate result in this case, first of all, one needs to use finer grid for $x$ and larger value for $M$, i.e., refinement in both time and space. Also, one needs to use more Chebyshev coefficients. With current values for the parameters, \texttt{TW}$(\beta)$ and
\texttt{TW}$(\beta; \texttt{method="spectral"}, \texttt{step="bdf5"})$ exhibits stability roughly for $1\leq \beta\leq 30$.

\subsection[Eigenvalues of Delta x(T(beta,theta\_M,h)+U(beta,x,theta\_M,h)) for finite-difference discretization]{Eigenvalues of $\boldsymbol{\Delta x[T(\beta,\theta_{M},h)+U(\beta,x,\theta_{M},h)]}$\\ for finite-difference discretization}

\begin{figure}[t]
 \centering\vspace{-2mm}
 \subfloat[\vspace*{-5.5mm}\centering Zoomed-out view] {{\includegraphics[width=0.36\linewidth]{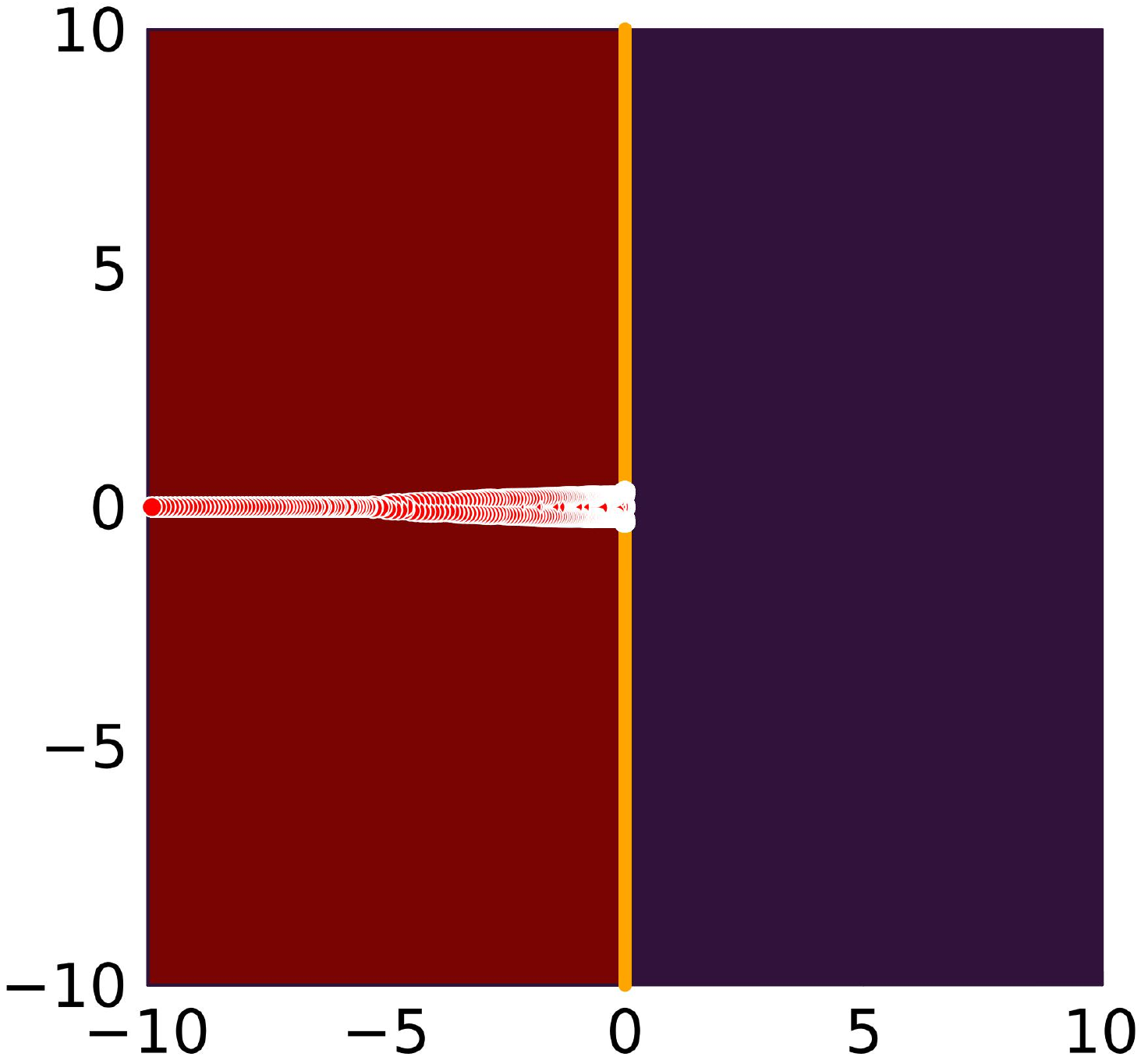} }}%
 \put(-178,67){\rotatebox{90}{\color{black}\small $\operatorname{Im}(z)$}}
 \put(-90,-7){\color{black}\small $\operatorname{Re}(z)$}
 \qquad\qquad
 \vspace*{5mm}
 \subfloat[\vspace*{-5.5mm}\centering Zoomed-in view]{{\includegraphics[width=0.365\linewidth]{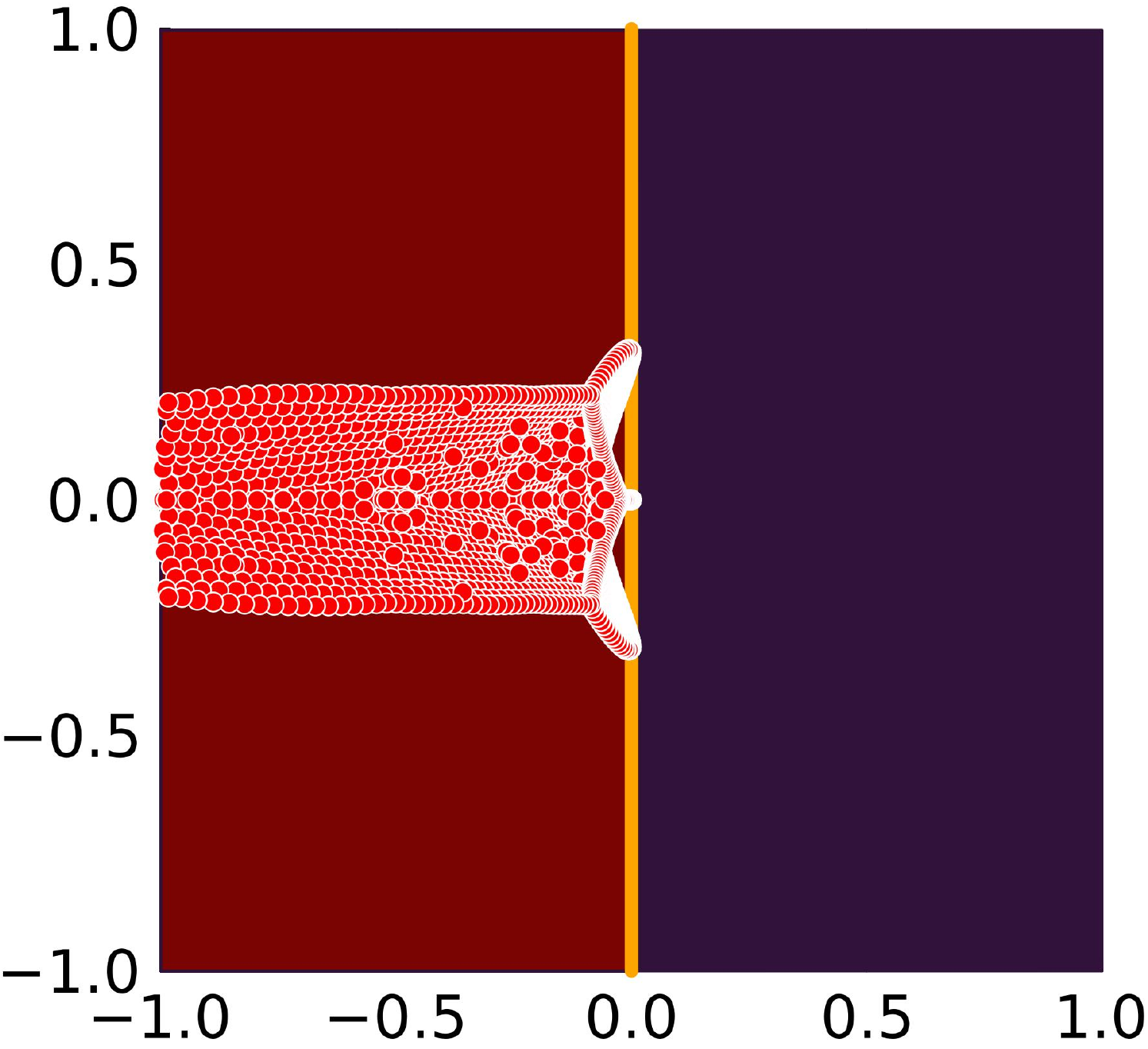} }}%
 \put(-178,67){\rotatebox{90}{\color{black}\small $\operatorname{Im}(z)$}}
 \put(-90,-7){\color{black}\small $\operatorname{Re}(z)$}

 \vspace{-1mm}

 \caption{The absolute stability region of trapezoidal method applied to the finite-difference discretization (the maroon-shaded region along with the orange boundary curve) and the eigenvalues of $\Delta x (T+U)$ (the red dots with white boundary) for $\beta=2$, $x=x_0,x_0-1,\dots,x_{N}$, $\Delta x=-10^{-3}$, and $M=10^3$.} \label{p:bdf5_fd}
\end{figure}

\begin{figure}[t]
 \centering\vspace{-3mm}

 \subfloat[\vspace*{-5.5mm}\centering Zoomed-out view] {{\includegraphics[width=0.36\linewidth]{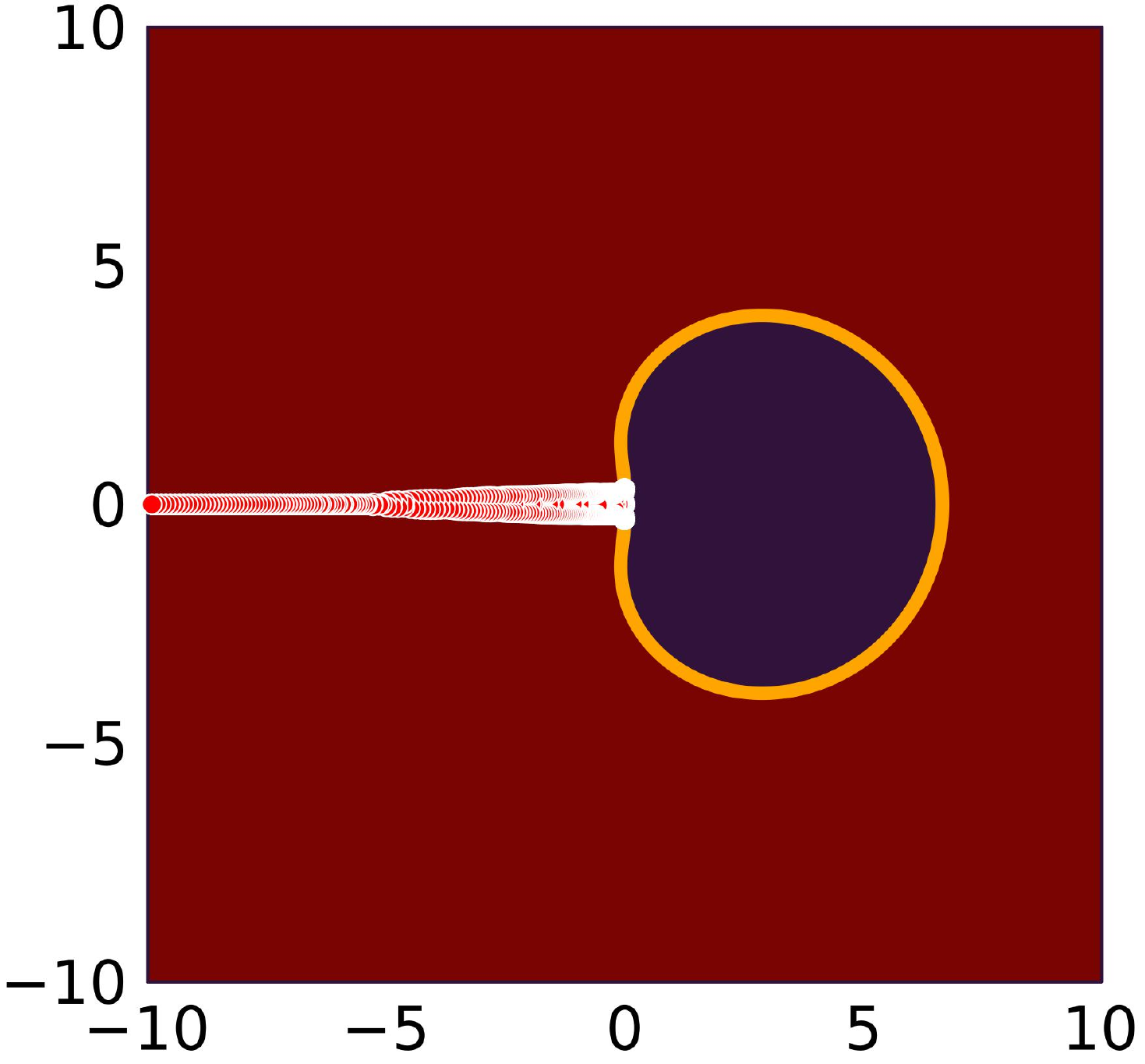} }}%
 \put(-178,67){\rotatebox{90}{\color{black}\small $\operatorname{Im}(z)$}}
 \put(-90,-7){\color{black}\small $\operatorname{Re}(z)$}
 \qquad\qquad
 \vspace*{3mm}
 \subfloat[\vspace*{-5.5mm}\centering Zoomed-in view]{{\includegraphics[width=0.365\linewidth]{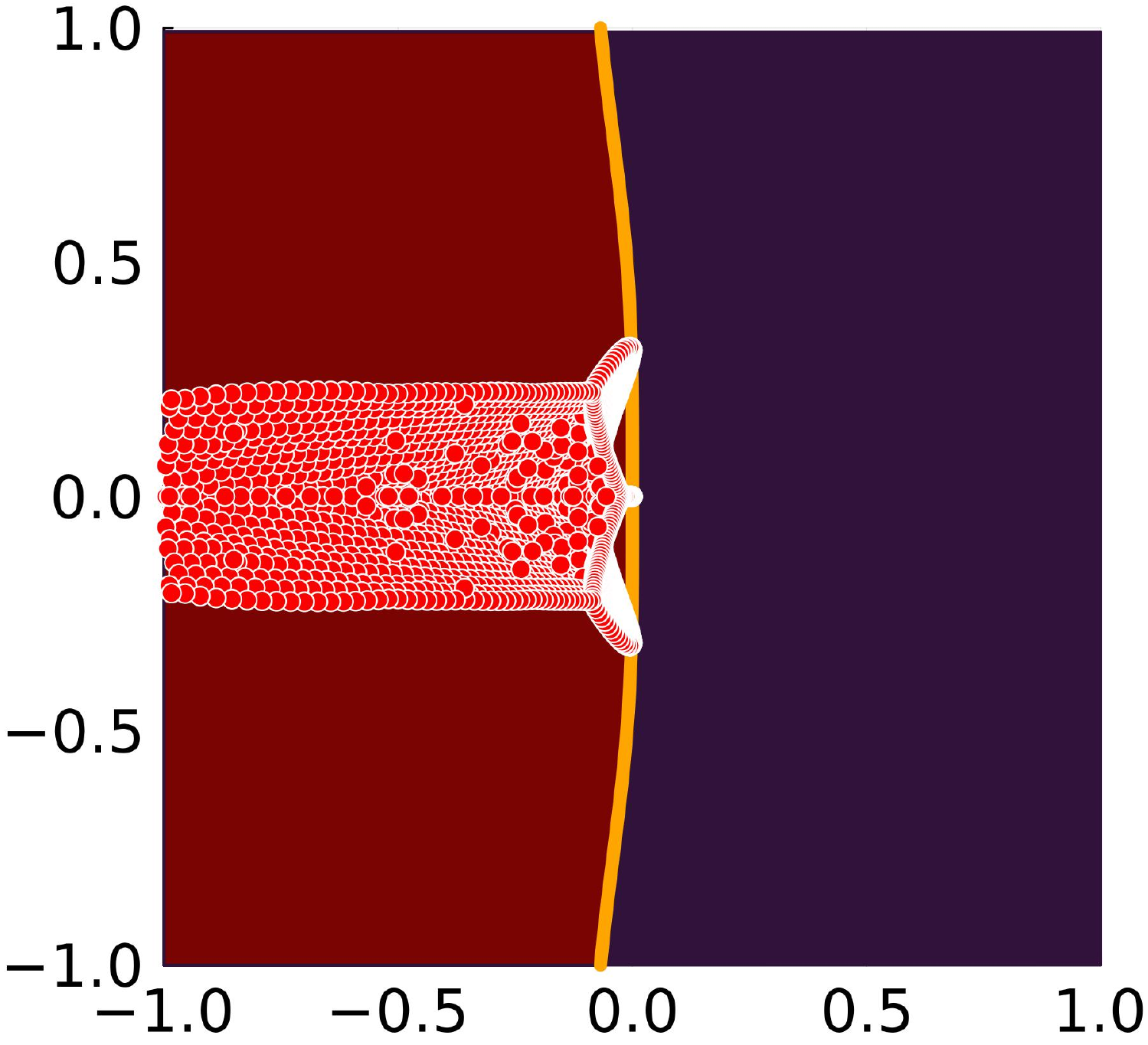} }}%
 \put(-178,67){\rotatebox{90}{\color{black}\small $\operatorname{Im}(z)$}}
 \put(-90,-7){\color{black}\small $\operatorname{Re}(z)$}

  \vspace{-1mm}

 \caption{The absolute stability region of BDF3 applied to the finite-difference discretization (the maroon-shaded region along with the orange boundary curve) and the eigenvalues of $\Delta x (T+U)$ (the red dots with white boundary) for $\beta=2$, $x=x_0,x_0-1,\dots,x_{N}$, $\Delta x=-10^{-3}$, and $M=10^3$.} \label{p:bdf6_fd}
 \vspace{-2mm}
\end{figure}

The default time-stepping routine for the finite-difference discretization is the trapezoidal me\-thod but BDF3, BDF4, BDF5, and BDF6 can also be used on~\eqref{eqn:fd} to solve for $\pmb{H}_{M}$. It turns out that with values for the parameters as in~\eqref{eqn:paraf}, convergence occurs for all these methods. Since with the finite-difference discretization, the trapezoidal method and BDF3 are usually used, Figures~\ref{p:bdf5_fd} and~\ref{p:bdf6_fd} show the absolute stability regions of trapezoidal method and BDF3 along with eigenvalues of $\Delta x( T + U)$ for $\beta=2$, $x=x_0,x_0-1,\dots,x_{N}$, $\Delta x=-10^{-3}$, and $M=10^3$. The eigenvalues in the left-half plane are firmly within the region of absolute stability in each case.\looseness=1

\subsection[Eigenvalues of Delta x(A+xB) for the spectral discretization]{Eigenvalues of $\boldsymbol{\Delta x(A+xB)}$ for the spectral discretization}
\begin{figure}[t]\vspace{-2mm}
 \centering
 \subfloat[\vspace*{-5.5mm}\centering Zoomed-out view] {{\includegraphics[width=0.36\linewidth]{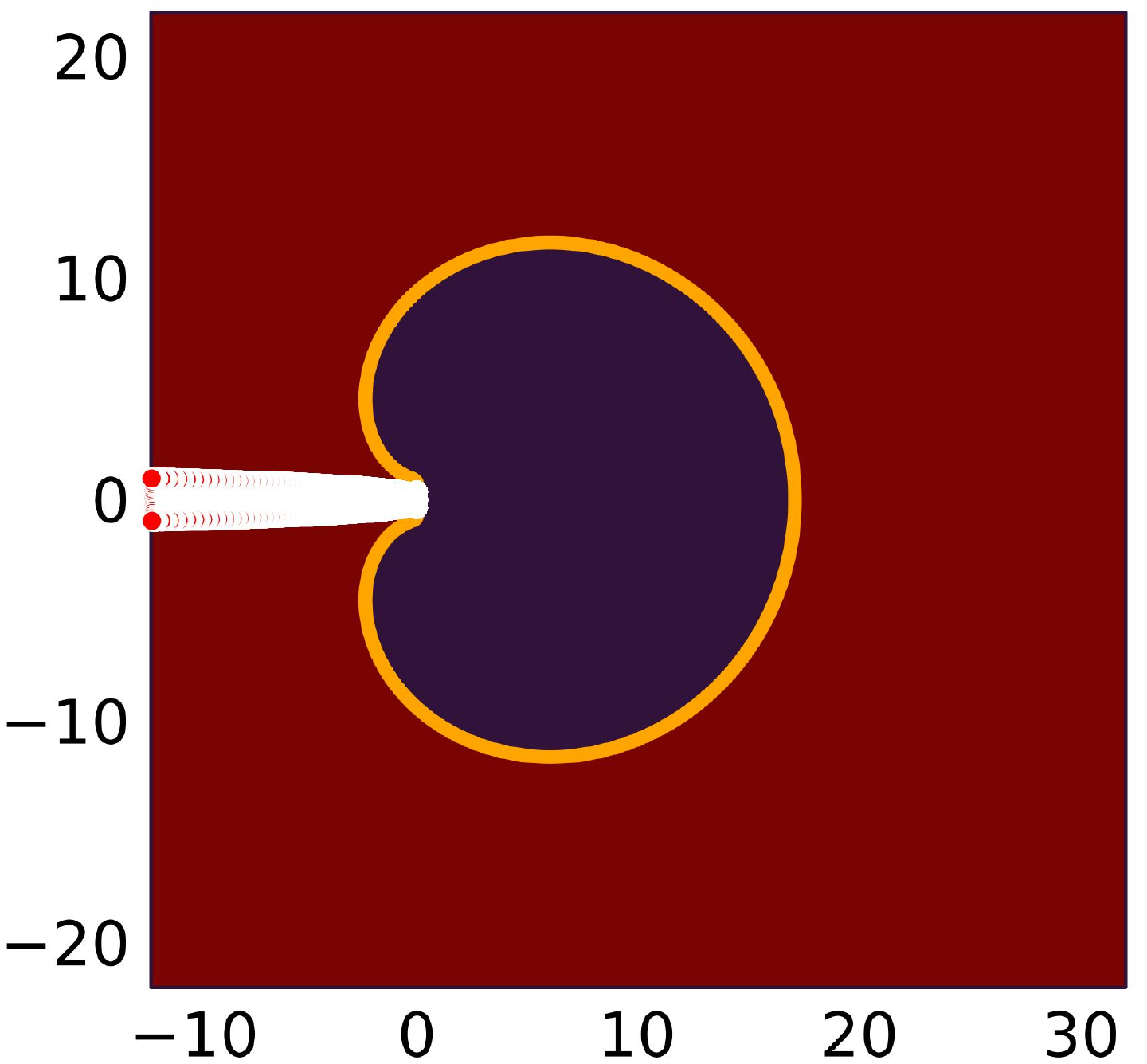} }}%
 \put(-178,67){\rotatebox{90}{\color{black}\small $\operatorname{Im}(z)$}}
 \put(-90,-7){\color{black}\small $\operatorname{Re}(z)$}
 \qquad\qquad
 \vspace*{5mm}
 \subfloat[\vspace*{-5.5mm}\centering Zoomed-in view]{{\includegraphics[width=0.37\linewidth]{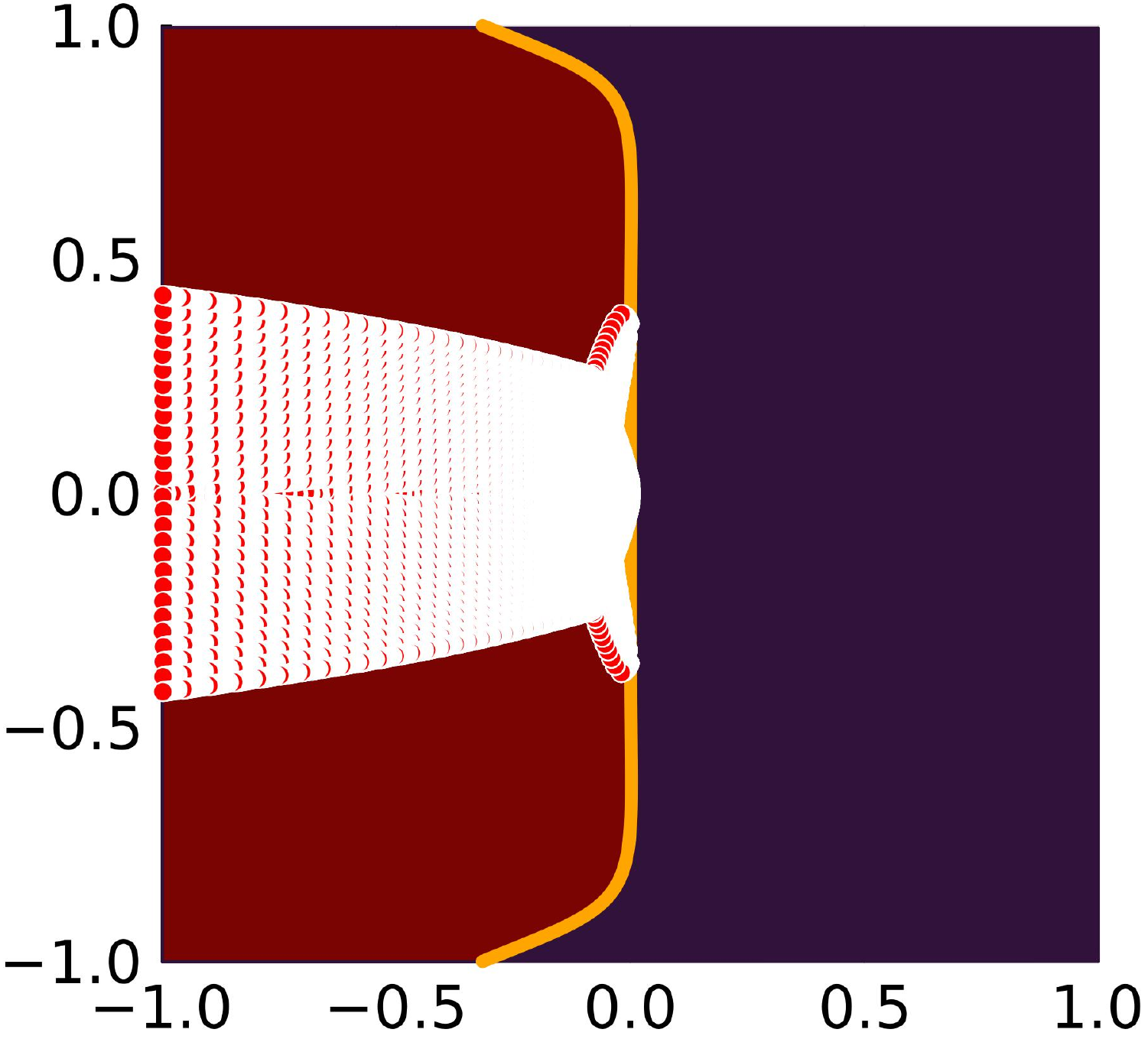} }}%
 \put(-178,67){\rotatebox{90}{\color{black}\small $\operatorname{Im}(z)$}}
 \put(-90,-7){\color{black}\small $\operatorname{Re}(z)$}
 \caption{The absolute stability region of BDF5 applied to the spectral discretization (the maroon-shaded region along with the orange boundary curve) and the eigenvalues of $\Delta x (A+xB)$ (the red dots with white boundary) for $\beta=2$, $x=x_0,x_0-1,\dots,x_{N}$, $\Delta x=-10^{-3}$, and $M=8\times 10^3$.}%
 \label{p:bdf5_sd}%
\end{figure}
\begin{figure}[t]\vspace{-3mm}
 \centering
 \subfloat[\vspace*{-5.5mm}\centering Zoomed-out view] {{\includegraphics[width=0.36\linewidth]{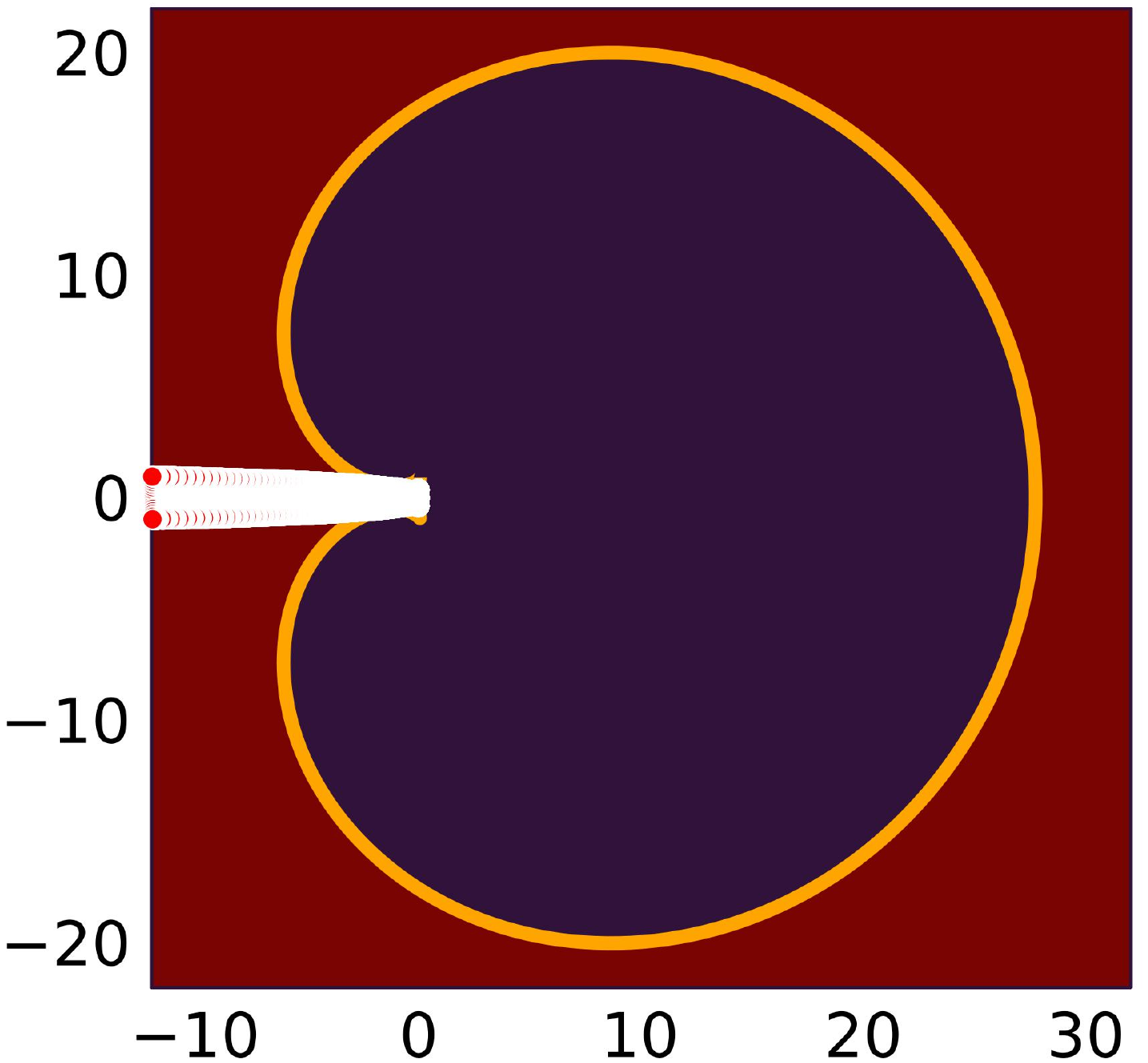} }}%
 \put(-178,67){\rotatebox{90}{\color{black}\small $\operatorname{Im}(z)$}}
 \put(-90,-7){\color{black}\small $\operatorname{Re}(z)$}
 \qquad\qquad
 \vspace*{5mm}
 \subfloat[\vspace*{-5.5mm}\centering Zoomed-in view]{{\includegraphics[width=0.37\linewidth]{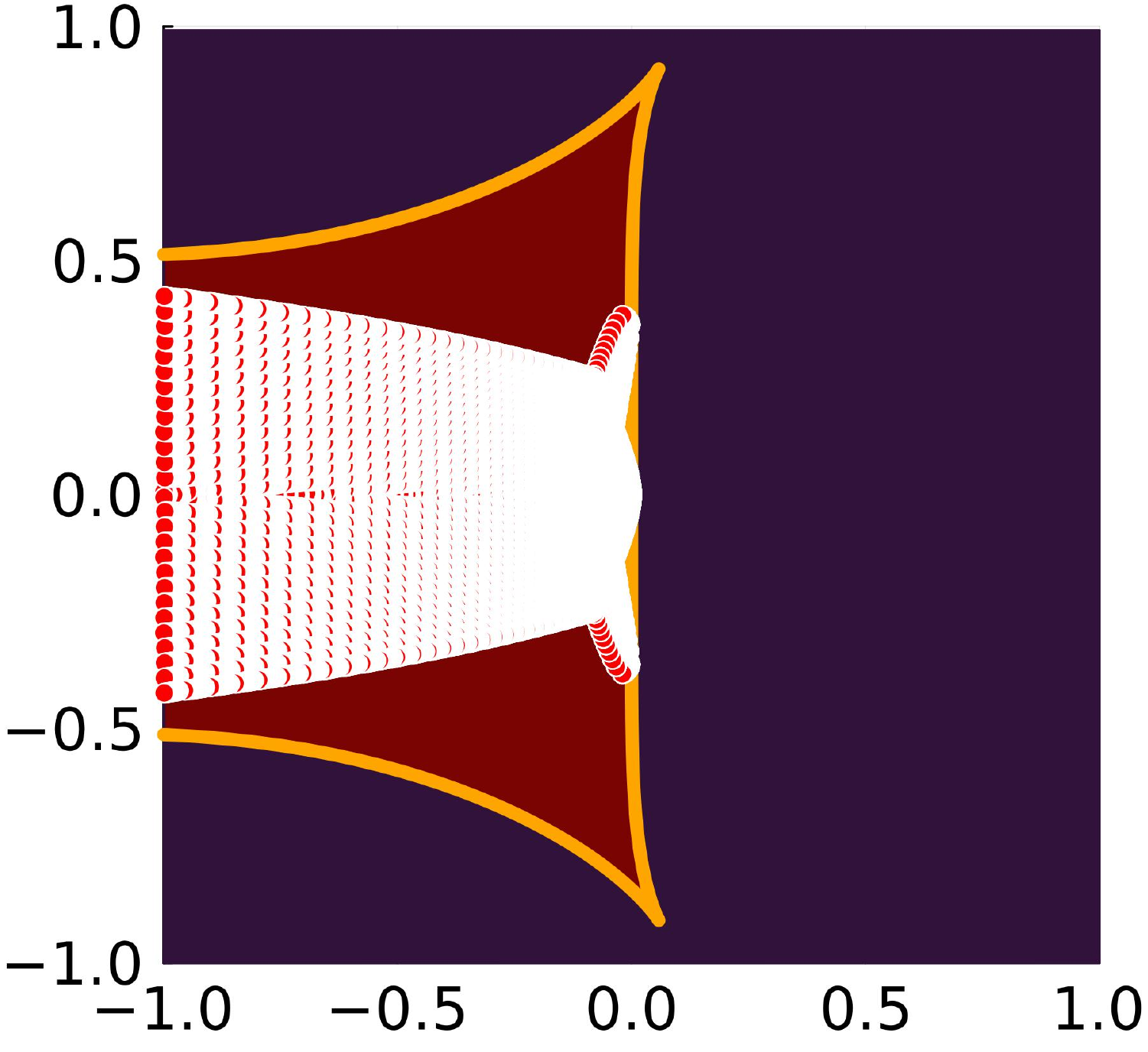} }}%
 \put(-178,67){\rotatebox{90}{\color{black}\small $\operatorname{Im}(z)$}}
 \put(-90,-7){\color{black}\small $\operatorname{Re}(z)$}
 \caption{The absolute stability region of BDF6 applied to the spectral discretization (the maroon-shaded region along with the orange boundary curve) and the eigenvalues of $\Delta x (A+xB)$ (the red dots with white boundary) for $\beta=2$, $x=x_0,x_0-1,\dots,x_{N}$, $\Delta x=-10^{-3}$, and $M=8\times 10^3$.}%
 \label{p:bdf6_sd}\vspace{-1mm}
\end{figure}

Figures~\ref{p:bdf5_sd} and~\ref{p:bdf6_sd} show the absolute stability regions of BDF5 and BDF6 along with eigenvalues of $\Delta x (A + x B)$ for $\beta=2$, $x=x_0,x_0-1,\dots,x_{N}$, $\Delta x=-10^{-3}$, and $M=8\times 10^3$. It is clear that, with the values of the parameters given as in~\eqref{eqn:paras}, the eigenvalues in the left-half plane are firmly within the region of absolute stability for these methods. We choose BDF5 over BDF6 because with the same values of these parameters, the performance is roughly the same yet BDF5 has a larger absolute stability region. Selecting $\Delta x$ is more nuanced than one might think, see Appendix~\ref{a:2} for more details.

\subsection{Error analysis}

We now compare the accuracy of these two algorithms when computing the cdf for ${\beta=1,2,4}$. For reference solutions, we implement the Fredholm determinant representations for $\beta = 1,2,4$ \mbox{\cite{Bornemann2009OnTN,BornemannFredholm, Ferrari2005}} by porting the code in the \textsc{Julia} package \texttt{RandomMatrices} to \textsc{Mathematica} and implementing it in high-precision arithmetic. This ensures that our reference solutions are accurate to beyond the $\approx 10^{-16}$ machine precision for standard double precision.
Observe that for~$F_4(x)$,~$x$ should be divided by a factor of $2^{1/6}$. This difference in variance convention has also been underscored in~\cite[p.~47]{nadalc}, \cite{Mays2021,Nadal2011}, and~\cite[Remark 5.1.4]{Bloemendal2011FiniteRP}.

\subsubsection{Error across the domain}
\begin{figure}[!ht]
 \centering
 \subfloat[\vspace*{-5.5mm}\centering FD (Trapz)] {{\includegraphics[width=0.46\linewidth]{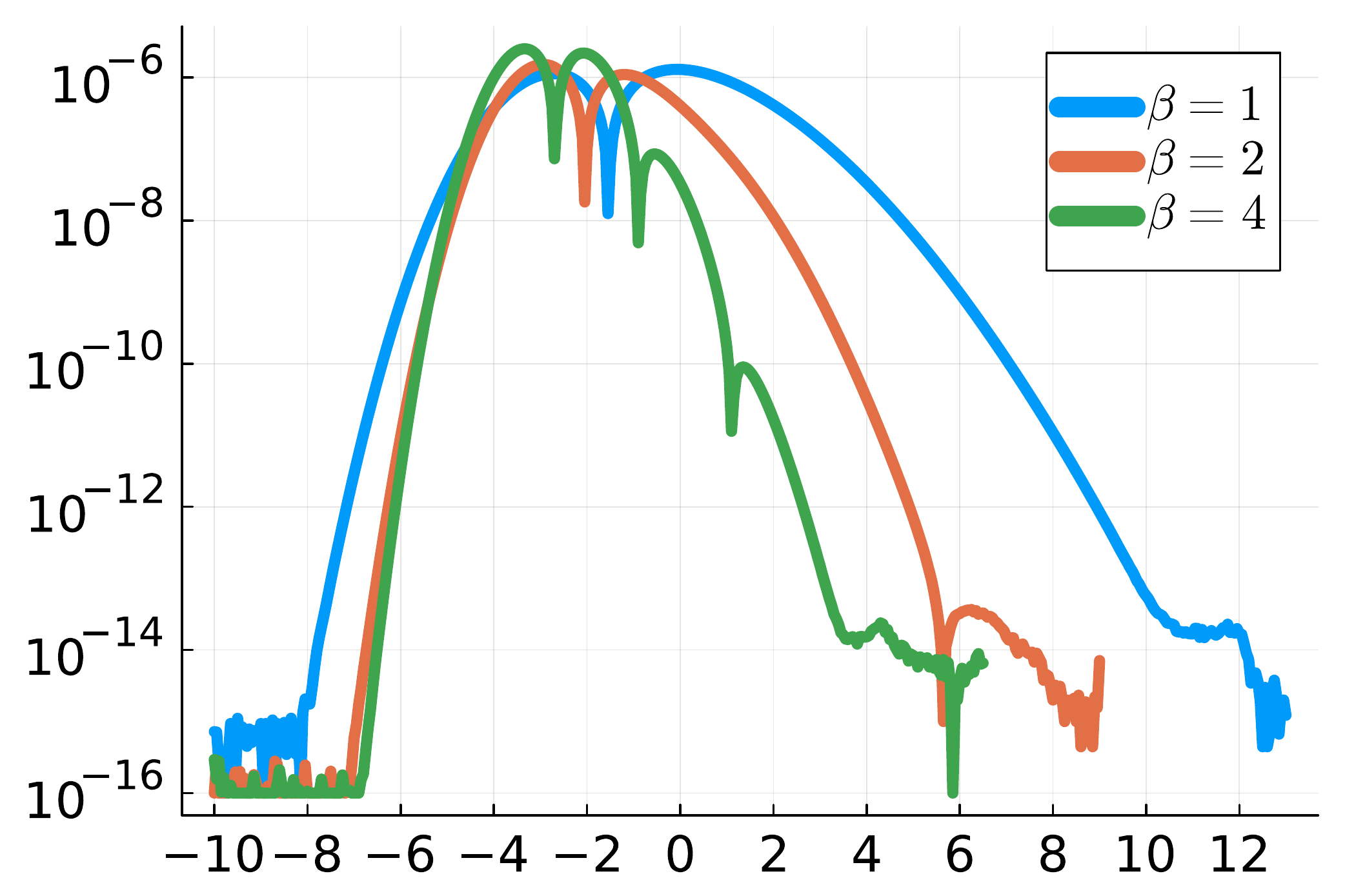} }}%
 \put(-220,63){\rotatebox{90}{\small Error}}
 \put(-103,-6){\color{black}\small $x$}
 \qquad
 \vspace*{5mm}
 \subfloat[\vspace*{-5.5mm}\centering SD (BDF5)]{{\includegraphics[width=0.46\linewidth]{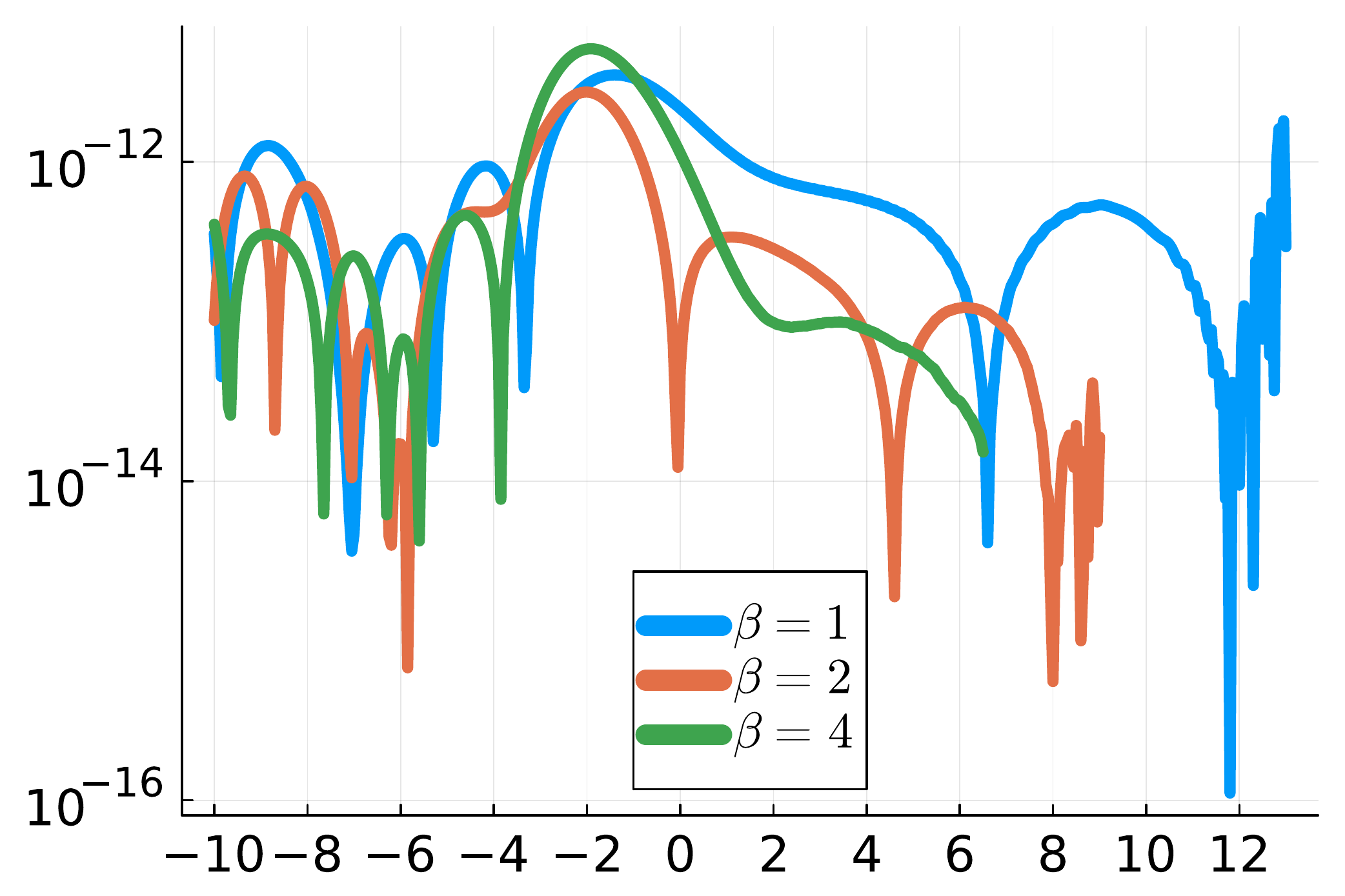} }}%
 \put(-220,63){\rotatebox{90}{\small Error}}
 \put(-103,-6){\color{black}\small $x$}
 \caption{Absolute errors from the two algorithms are presented over the domain $x\in [-10,13]$ for $\beta=1,2,4$ with $x_{0}=\lfloor 13/\sqrt{\beta}\rfloor$ and $\Delta x=-10^{-3}$. For the finite-difference discretization, $M=10^3$ is used, while for the spectral discretization, $M=8\times 10^3$ is employed.}%
 \label{p:erroracross}%
\end{figure}
With values of the parameters given as in~\eqref{eqn:paraf} and~\eqref{eqn:paras}, Figure~\ref{p:erroracross} shows the absolute errors of the two algorithms across the domain. Tables~\ref{tab1} and \ref{tab2} show the absolute errors of some selected $x$-values with same values for the parameters.

For finite-difference discretization, from either Figure~\ref{p:erroracross}\,(a) or Table~\ref{tab1}, the error is roughly on the order of $10^{-7}$ around the peak of the distribution, and it improves when $x$ approaches either end of the domain. This is due to the fact that exact values, $0$ and $1$, for the initial condition are imposed on the endpoints of the domain, and the Dirichlet boundary condition ensures that the solution tends to zero.

For spectral discretization, from either Figure~\ref{p:erroracross}\,(b) or Table~\ref{tab2}, the error is roughly on the order of $10^{-13}$ throughout the entire domain. Regardless of the order of error, the reason for this discrepancy from finite-difference discretization near the edges of the domain is that an error is introduced for the initial condition when we represent $\rho(x,\theta)$ in terms of a truncated Fourier series.
\begin{table}[t]
 \centering
 \begin{tabular}{|c|c|c|c|}
 \hline
 \diagbox[width=5em]{$x$}{$\beta$} &\makebox[9em]{$1$}&\makebox[9em]{$2$} &\makebox[9em]{$4$} \\ \hline
 $-8$ & $1.832(-15)$ & $5.024(-17)$ & $1.950(-17)$\\ \hline
 $-6$ & $7.278(-10)$ & $9.602(-12)$ & $4.766(-16)$ \\ \hline
 $-4$ & $3.715(-7)$ & $3.600(-7)$ & $1.807(-7)$ \\ \hline
 $-2$ & $6.553(-7)$ & $1.027(-7)$ & $2.020(-6)$ \\ \hline
 $0$ & $1.295(-6)$ & $4.014(-7)$ & $3.319(-8)$ \\ \hline
 $2$ & $4.116(-7)$ & $1.137(-8)$ & $6.185(-12)$ \\ \hline
 $4$ & $3.352(-8)$ & $3.286(-11)$ & $1.565(-14)$ \\ \hline
 $6$ & $9.575(-10)$ & $3.186(-14)$ & $5.995(-15)$ \\ \hline
 \end{tabular}

 \caption{Absolute errors of finite-difference discretization using trapezoidal method with values of the parameters as in~\eqref{eqn:paraf}.}
 \label{tab1}
\end{table}
\begin{table}[t]
 \centering
 \begin{tabular}{|c|c|c|c|}
 \hline
 \diagbox[width=5em]{$x$}{$\beta$} &\makebox[9em]{$1$}&\makebox[9em]{$2$} &\makebox[9em]{$4$} \\ \hline
 $-8$ & $5.912(-13)$ & $6.870(-13)$ & $3.509(-13)$\\ \hline
 $-6$ & $3.278(-13)$ & $3.470(-14)$ & $2.021(-13)$ \\ \hline
 $-4$ & $9.057(-13)$ & $4.580(-13)$ & $4.525(-13)$ \\ \hline
 $-2$ & $2.663(-12)$ & $2.740(-12)$ & $4.809(-12)$ \\ \hline
 $0$ & $2.162(-12)$ & $1.090(-13)$ & $1.135(-12)$ \\ \hline
 $2$ & $7.924(-13)$ & $3.514(-13)$ & $9.293(-14)$ \\ \hline
 $4$ & $5.719(-13)$ & $1.423(-13)$ & $7.527(-14)$ \\ \hline
 $6$ & $1.819(-13)$ & $7.094(-14)$ & $6.439(-15)$ \\ \hline
 \end{tabular}

 \caption{Absolute errors of spectral discretization using BDF5 with values of the parameters as in~\eqref{eqn:paras}.}
 \label{tab2}
\end{table}

\subsubsection{Order of error}
\begin{figure}[t]
 \centering
 \subfloat [\vspace*{-6.5mm}\centering FD discretization]{{\includegraphics[width=0.46\linewidth]{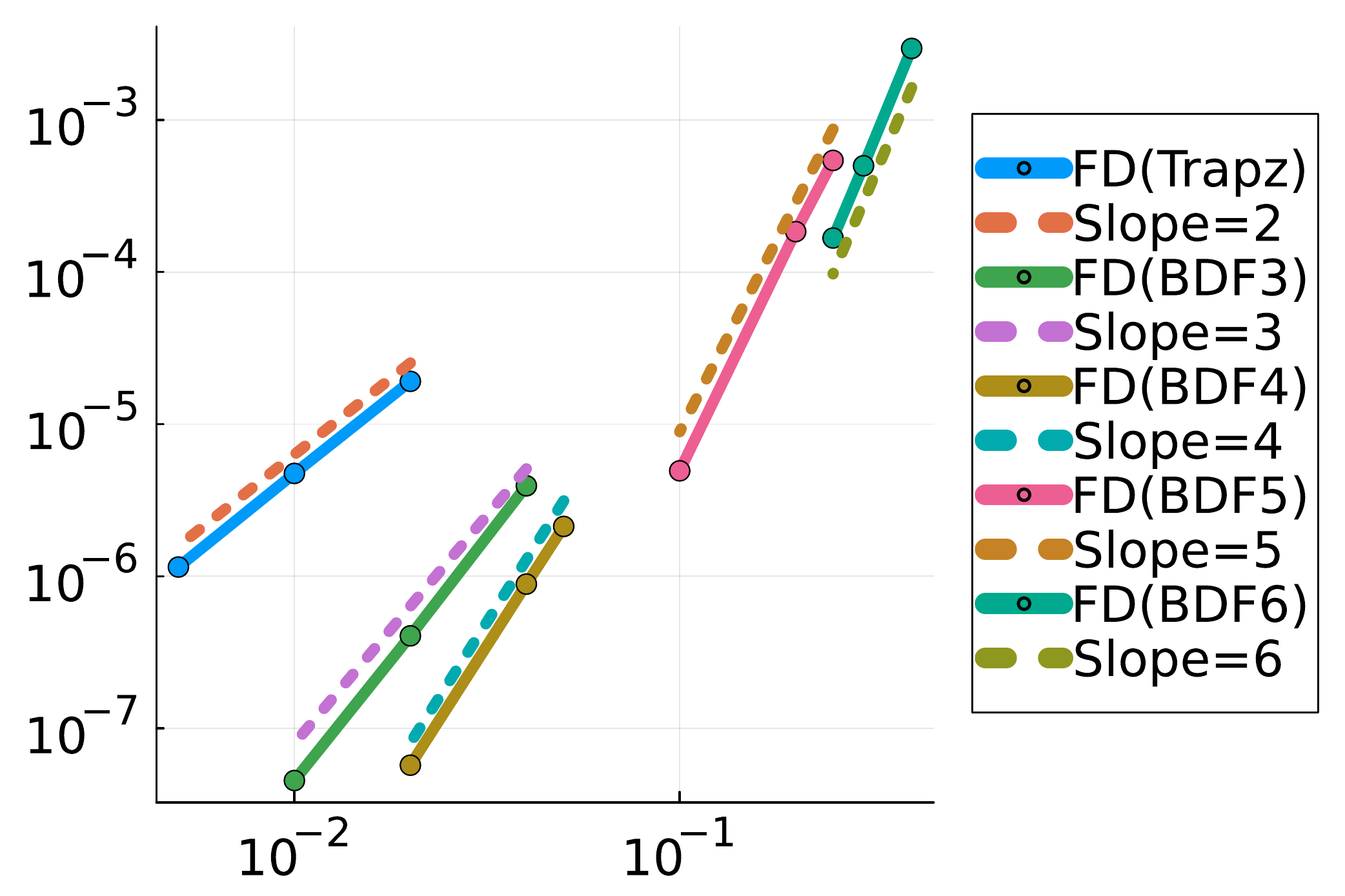} }}%
 \put(-220,63){\rotatebox{90}{\small Error}}
 \put(-120,-8){\color{black}\small $|\Delta x|$}
 \qquad
 \subfloat [\vspace*{-6.5mm}\centering Spectral discretization]{{\includegraphics[width=0.46\linewidth]{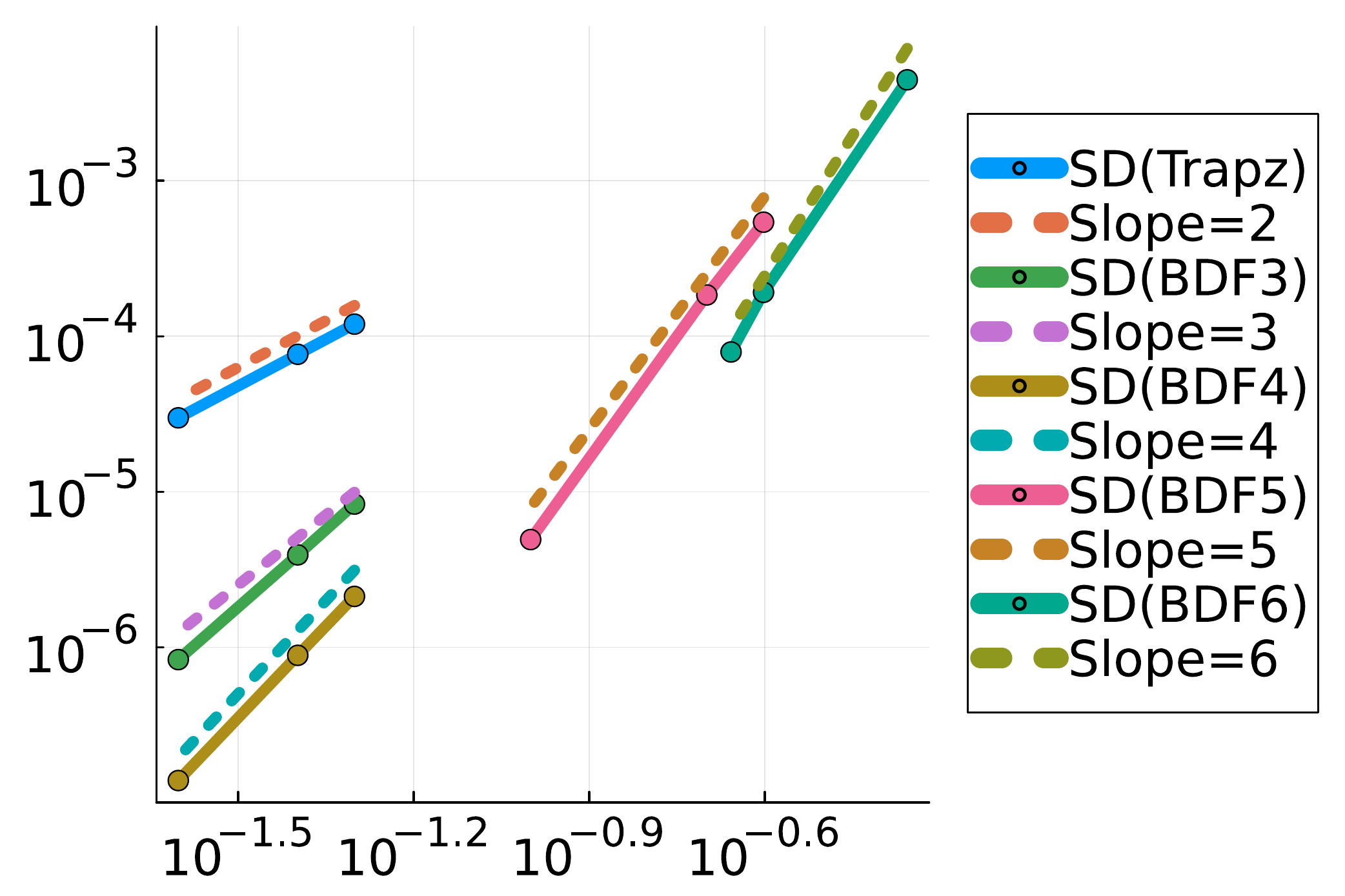} }}%
 \put(-220,63){\rotatebox{90}{\small Error}}
 \put(-110,-8){\color{black}\small $|\Delta x|$}
 \vspace*{5mm}\caption{The order of accuracy for the finite-difference (FD) discretization and the spectral discretization.  (a)
 The finite-difference discretization. The errors for trapezoidal (Trapz) method are computed by treating \texttt{TW}\big($\beta\texttt{=}2$; $x_{0}=\lfloor13/\sqrt{\beta}\rfloor$, \texttt{step="trapz"}, \texttt{interp=false}, $\Delta x\texttt{=}-10^{-3}$, $M\texttt{=}10^3$\big) as the reference and compared with $\Delta x=-0.005,-0.01,-0.02$ at $x=-2$. For BDF methods, the same reference is used with $\texttt{step}$ changed accordingly. For BDF3, $\Delta x=-0.01,-0.02,-0.04$ are used. For BDF4, $\Delta x=-0.02,-0.04,-0.05$ are used. For BDF5, $\Delta x=-0.1,-0.2,-0.25$ are used. For BDF6, $\Delta x=-0.25,-0.3,-0.4$ are used. (b) The spectral discretization. The errors for trapezoidal (Trapz) method are computed by treating \texttt{TW}\big($\beta\texttt{=}2$; $x_{0}=\lfloor 13/\sqrt{\beta}\rfloor$, \texttt{method="spectral"}, \texttt{step="trapz"}, \texttt{interp=false}, $\Delta x\texttt{=}-10^{-3}$, $M\texttt{=}8000$\big) as the reference and compared with $\Delta x=-0.025,-0.04,-0.05$ at $x=-2$. For BDF methods, the same reference is used with $\texttt{step}$ changed accordingly. For BDF3 and BDF4, $\Delta x=-0.025,-0.04,-0.05$ are used. For BDF5, $\Delta x=-0.1,-0.2,-0.25$ are used. For BDF6, $\Delta x=-0.22,-0.25,-0.44$ are used.}
 \label{p:order1}
\end{figure}

\begin{figure}[!ht]
 \centering
 \subfloat[\vspace*{-5.5mm}\centering FD (Trapz)] {{\includegraphics[width=0.46\linewidth]{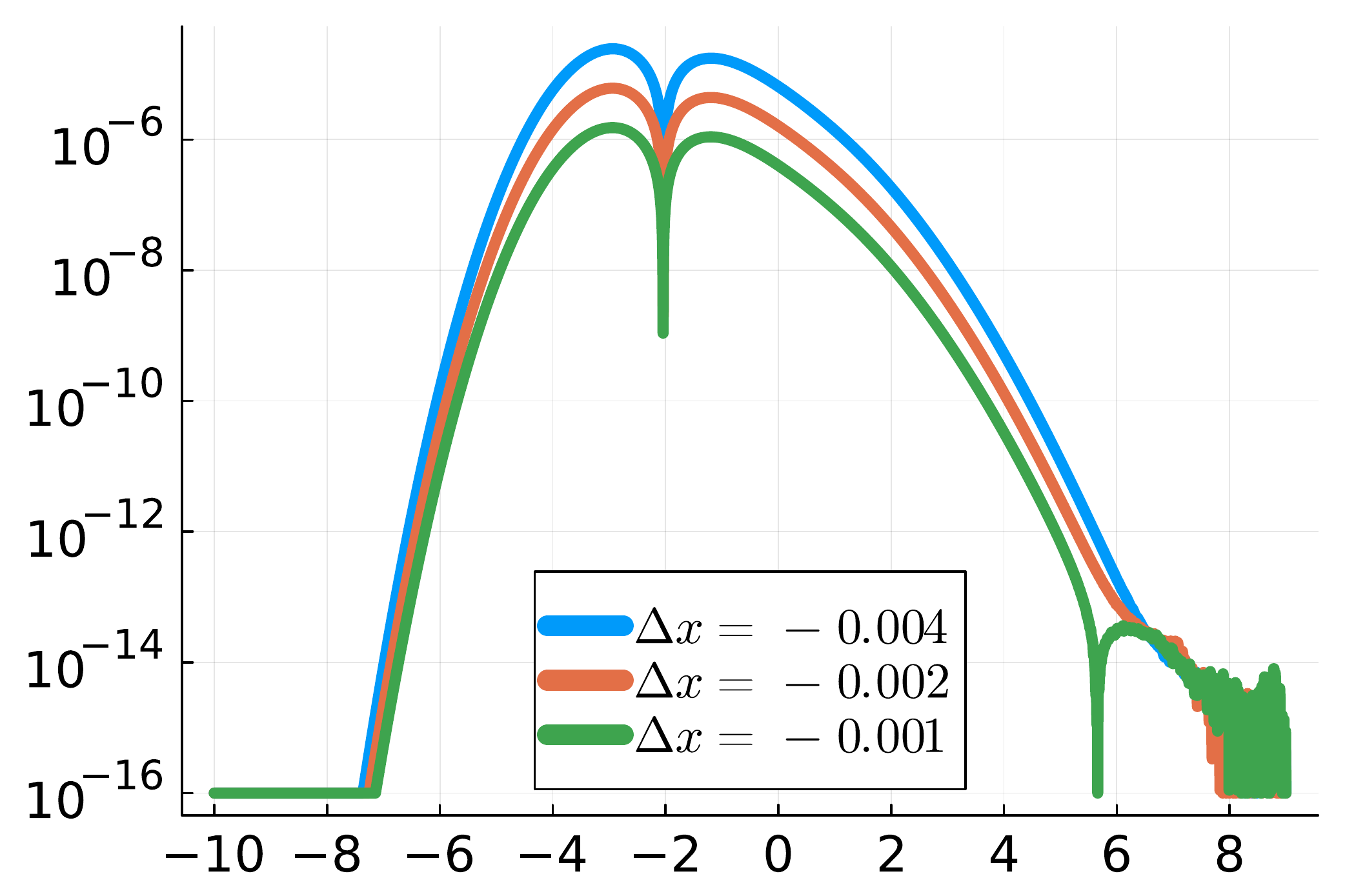} }}%
 \put(-220,63){\rotatebox{90}{\small Error}}
 \put(-103,-6){\color{black}\small $x$}
 \qquad
 \vspace*{5mm}
 \subfloat[\vspace*{-5.5mm}\centering SD (BDF5)] {{\includegraphics[width=0.46\linewidth]{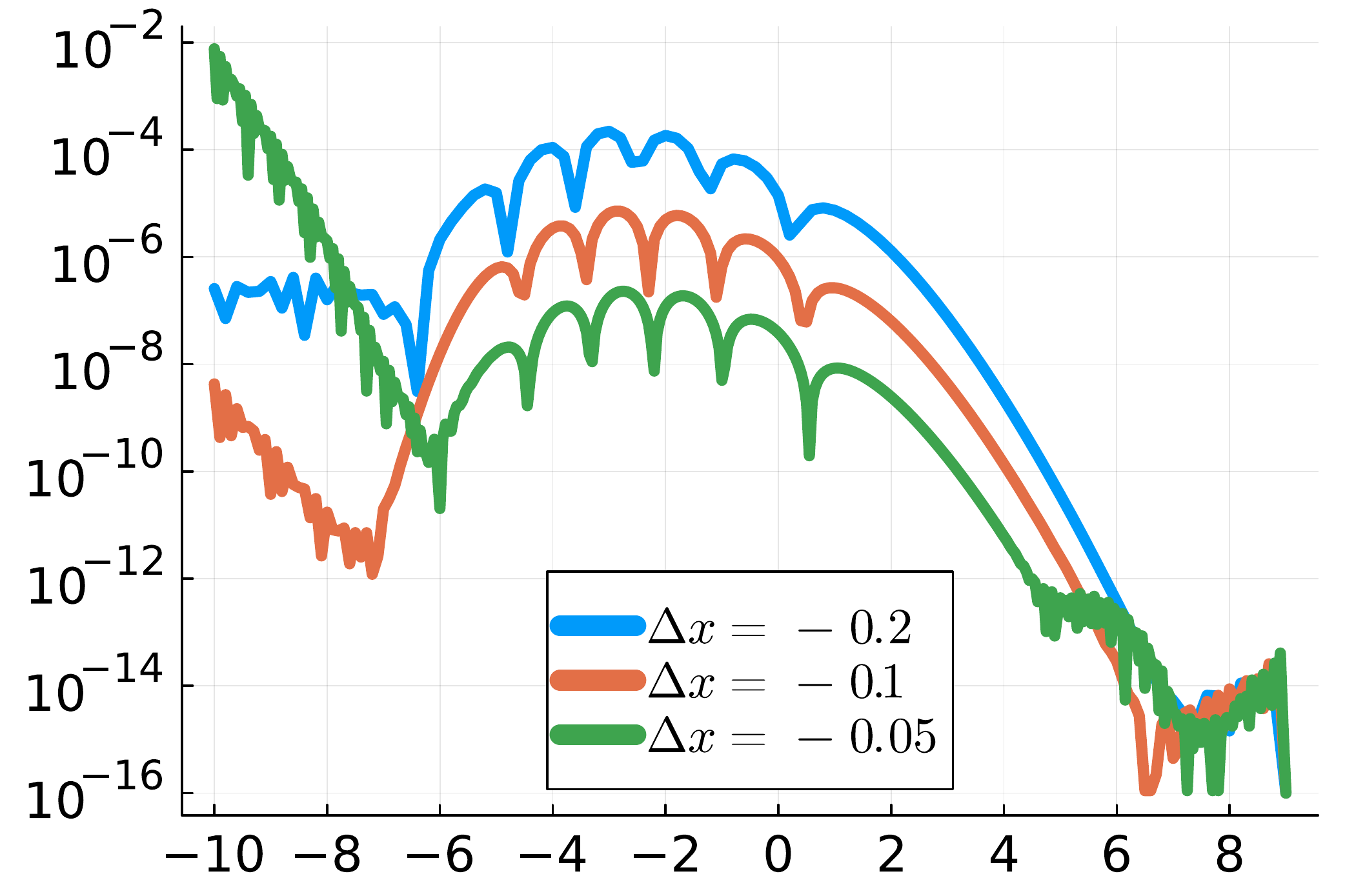} }}%
 \put(-220,63){\rotatebox{90}{\small Error}}
 \put(-103,-6){\color{black}\small $x$}
 \caption{Change of error of finite-difference discretization using trapezoidal method with $\Delta x=-0.004,-0.002,-0.001$ and spectral discretization using BDF5 with $\Delta x=-0.2,-0.1,-0.05$. Values of the other parameters are used as in~\eqref{eqn:paraf} and~\eqref{eqn:paras}. When applying BDF5 to the spectral discretization, errors are more pronounced for smaller time steps at the left edge due to the onset of instability discussed in the appendix.}%
 \label{p:errorchange}%
\end{figure}
For $\beta=2$ evaluated at $x=-2$, Figure~\ref{p:order1} shows the order of error plots of finite-difference discretization and spectral discretization. We choose $x=-2$ since it is near the peak of the distribution. With $\Delta x=-0.004,-0.002,-0.001$ for finite-difference discretization and $\Delta x=-0.2,-0.1,-0.05$ for spectral discretization, Figure~\ref{p:errorchange} shows the change of error as the value of~$|\Delta x|$ decreases.

For both the finite-difference discretization and the spectral discretization, from Figure~\ref{p:order1}, the error has the expected order with respect to $|\Delta x|$. However, if we decrease the value of $|\Delta x|$ further, each BDF method has a corresponding range for $\Delta x$ that causes instability. Moreover, once the value of~$\Delta x$ exits this range, the corresponding BDF method becomes accurate again. See Appendix~\ref{a:2} for more details.

\begin{Remark}
In addition to the trapezoidal and BDF methods, one could consider $A$-stable and $L$-stable diagonally implicit Runge--Kutta methods as alternative options for time-stepping. Furthermore, one could even consider adaptive time-stepping. While these alternative methods can effectively address the stability concerns arising from the spectrum of $\Delta x(A+xB)$ and $\Delta x(T+U)$, they do tend to increase the computation time. These methods likely require more than one linear solve at each time step (possibly one matrix factorization and multiple uses of it). And our target time step is $|\Delta x | = 10^{-3}$ and, with this time step, BDF5 is stable with the default parameters, requiring only one (sparse) linear solve per time step. Then the comparison of methods becomes a complicated competition between more expensive, higher-order methods that allow a larger time step and less expensive, but still fairly high-order, methods requiring a~smaller time step. It is possible that savings can be achieved here, but we did not see it in our experiments.
\end{Remark}

\subsubsection[Error with respect to x\_0]{Error with respect to $\boldsymbol{x_{0}}$}\label{s:x0}

\begin{figure}[t]
 \centering
 \subfloat[\vspace*{-5.5mm}\centering FD (Trapz)] {{\includegraphics[width=0.46\linewidth]{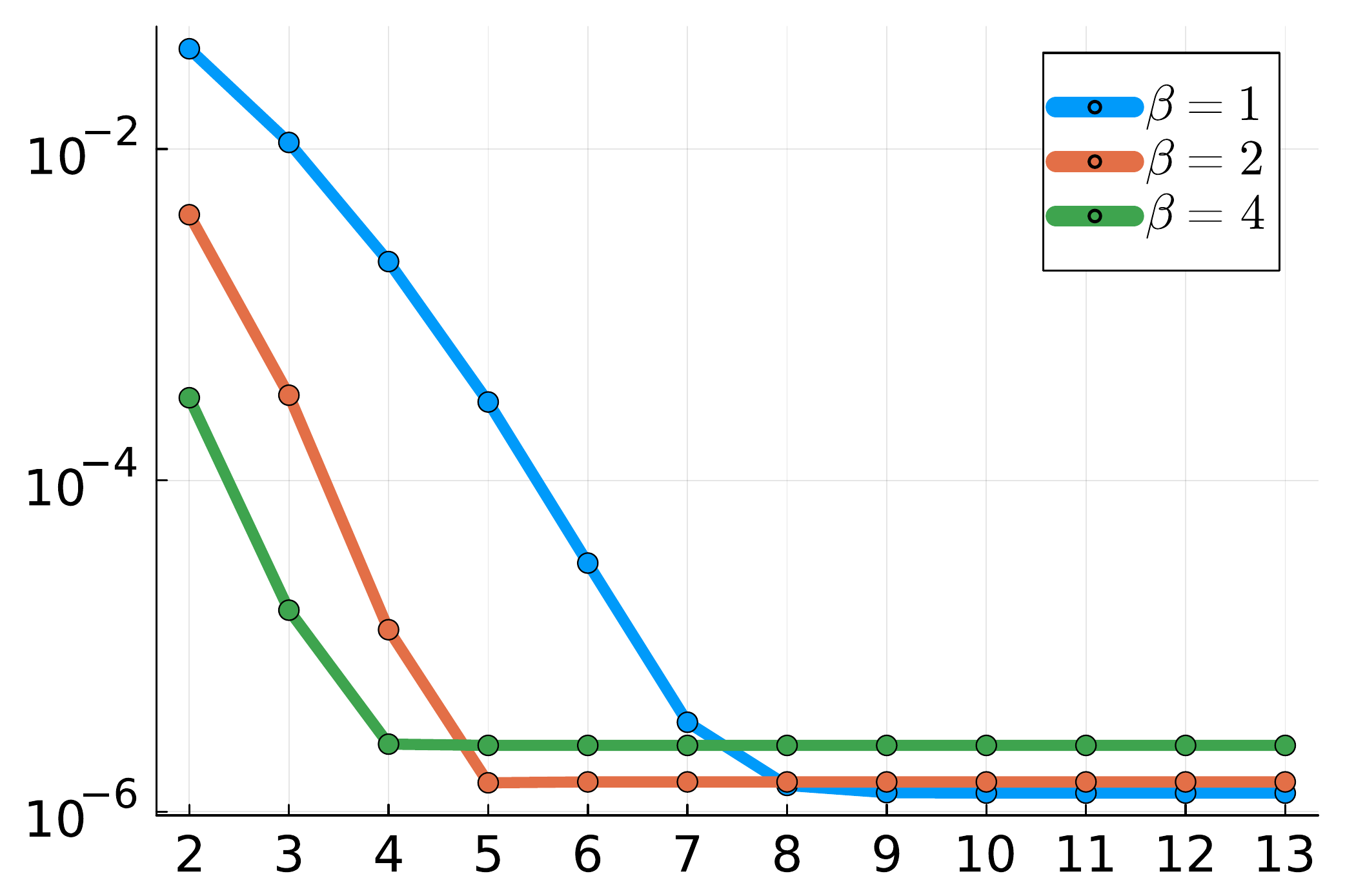} }}%
 \put(-220,63){\rotatebox{90}{\small Error}}
 \put(-103,-6){\color{black}\small $x_{0}$}
 \qquad
 \vspace*{5mm}
 \subfloat[\vspace*{-5.5mm}\centering SD (BDF5)] {{\includegraphics[width=0.46\linewidth]{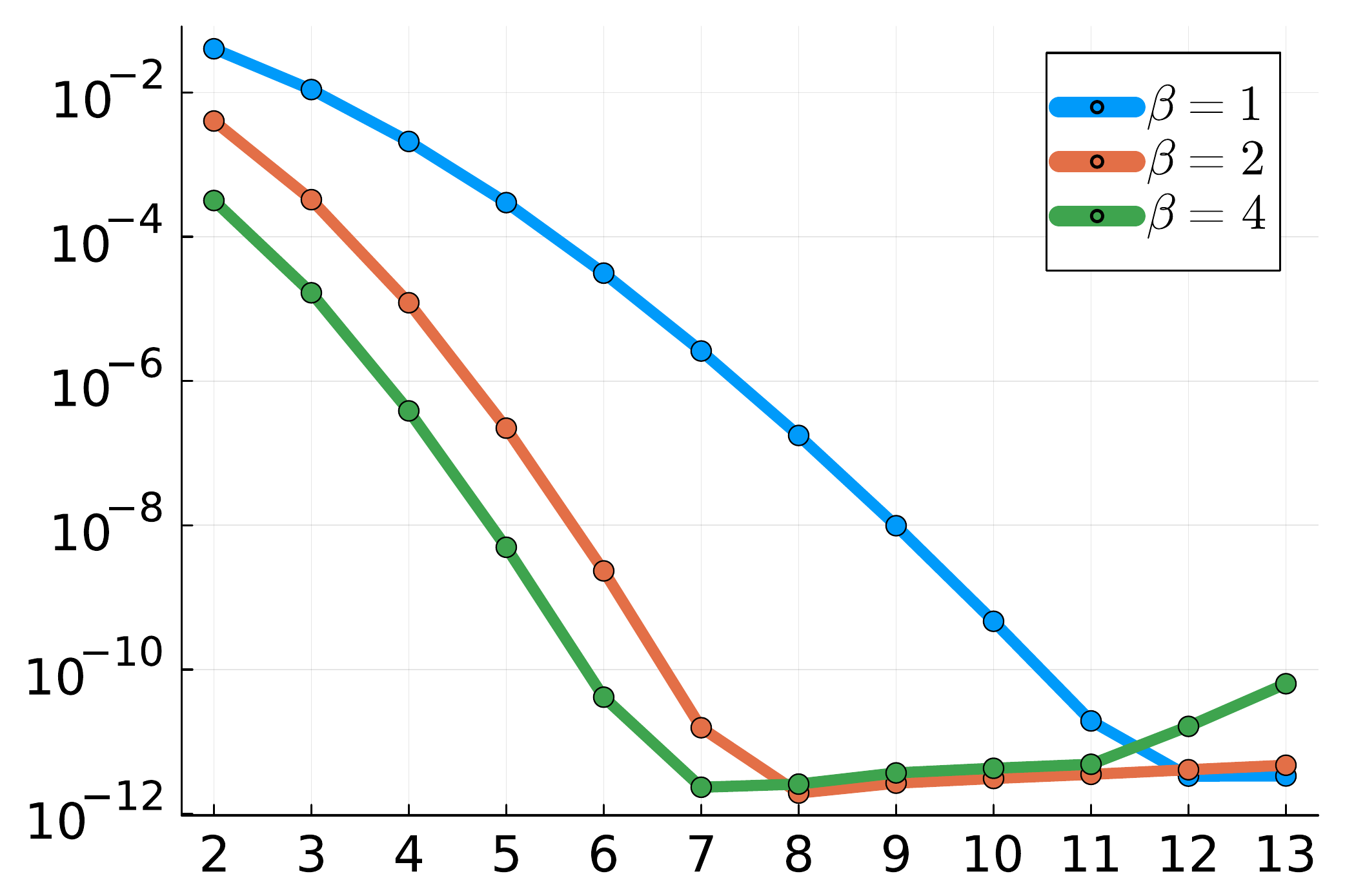} }}%
 \put(-220,63){\rotatebox{90}{\small Error}}
 \put(-103,-6){\color{black}\small $x_{0}$}
 \caption{Change of error of finite-difference discretization using trapezoidal method and spectral discretization using BDF5 with respect to the value of $x_{0}$. $\beta=1,2,4$ and $x_{0}=13,12,\dots,2$ are used. Values of the other parameters are the same as in~\eqref{eqn:paraf} and~\eqref{eqn:paras}. For each value of $\beta$ and $x_{0}$, the error is taken to be the maximum over $x=-8,-7,\dots,x_{0}$.} \label{p:x0}
\end{figure}

Figure~\ref{p:x0} shows how the maximum error over $x=-8,-7,\dots,x_{0}$ changes with respect to the value of $x_{0}$. For $\beta=4$ with spectral discretization using BDF5, the error for $x_{0}=12,13$ is roughly $\mathcal {O}\big(10^{-11}\big)$, which can be improved to $\mathcal {O}\big(10^{-12}\big)$ using more Fourier modes. Except for these two cases, for both methods, as the value of $\beta$ increases, the value of $\tilde{x}_{0}$, the minimum value of $x_{0}$ that can be used without affecting the accuracy, decreases. This is exactly what we expect since the larger value of $\beta$ is, the more concentrated the distribution becomes. Based on~\cite{Borot2012,Dumaz2013}, one expects $\tilde{x}_{0}\approx C/\beta^{\frac {2}{3}}$ for some constant $C$. This suggests a way to choose the optimal value of $x_{0}$ in terms of accuracy and computation time. For our algorithms, we choose $C=13$ since for $\beta=1$, $F_{\beta}(13)\approx 1$. In practice, instead of using $\beta^{\frac {2}{3}}$ for the denominator, we are more conservative and use $\beta^{\frac {1}{2}}$ since, based on Figure~\ref{p:x0}\,(b),
 when $\beta=4$, using $x_0=13/\beta^{\frac {2}{3}}\approx 5.16$ brings in an error of $\mathcal {O}\big(10^{-9}\big)$.

\subsection{Computation time}\label{s:22}
The values in Table~\ref{tab3} are obtained by running on a computer with processor: $11$th Gen Intel(R) Core(TM) i7-11800H $2.30$GHz with $16.0$GB of RAM.

\begin{table}[t]
 \centering
 \begin{tabular}{|c|c|c|c|c|}
 \hline
 \makebox[6em]{Discretization} &\makebox[4em]{Method}&\makebox[8em]{Time (default)} &\makebox[8em]{Error (default)}&\makebox[6em]{Time \big($10^{-6}$\big)} \\ \hline
 \multirow{5}{4.2em}{\centering Finite difference} & Trapz & $12.684$s & $1.027(-7)$ & $0.203$s \\
 & BDF3 & $10.731$s & $5.461(-8)$ & $0.138$s \\
 & BDF4 & $10.458$s & $5.457(-8)$ & $0.142$s \\
 & BDF5 & $10.947$s & $5.538(-8)$ & $0.141$s \\
 & BDF6 & $11.067$s & $5.539(-8)$ & $0.140$s \\ \hline
 \multirow{5}{4em}{\centering Spectral} & Trapz & $222.135$s & $4.790(-8)$ & $10.906$s \\
 & BDF3 & $205.888$s & $4.298(-11)$ & $1.949$s \\
 & BDF4 & $209.175$s & $2.346(-12)$ & $1.388$s \\
 & BDF5 & $201.819$s & $2.663(-12)$ & $1.008$s \\
 & BDF6 & $204.295$s & $2.677(-12)$ & $204.295$s \\\hline
 \end{tabular}

 \caption{For each discretization in the first column and time-stepping method in the second column, computation time to get the interpolated cdf for the Tracy--Widom distribution and the corresponding error at $x=-2$ for $\beta=2$ are recorded in the next two columns. The computation times in the third column are generated using the default parameters as in~\eqref{eqn:paraf} and~\eqref{eqn:paras}, with the corresponding errors in the fourth column. If we aim for an error of $\mathcal {O}\big(10^{-6}\big)$, the last column shows the corresponding minimum observed computation times for each discretization and time-stepping method to achieve this error.}
 \label{tab3}
\end{table}

Table~\ref{tab3} shows the computation time with default values of the parameters as in~\eqref{eqn:paraf} and~\eqref{eqn:paras} for $\beta=2$ along with the corresponding error at $x=-2$. The last column provides the computation time if we aim for an error of $\mathcal {O}\big(10^{-6}\big)$ at $x=-2$. For finite-difference discretization, to have the error of $ \mathcal {O}\big(10^{-6}\big)$ at $x=-2$, $\Delta x=-0.01$ can be used instead of $\Delta x=-0.001$. For spectral discretization, to have the error of $ \mathcal {O}\big(10^{-6}\big)$ at $x=-2$, $M=4\times 10^3$ can be used instead of $M=8\times 10^3$ with $\Delta x=-0.01$ for trapezoidal method, $\Delta x=-0.05$ for BDF3, $\Delta x=-0.07$ for BDF4, and $\Delta x=-0.1$ for BDF5. It turns out that the error using BDF6 will jump from $\mathcal {O}\big(10^{-5}\big)$ to $\mathcal {O}\big(10^{-12}\big)$. From Table~\ref{tab3}, it takes about $204.295$s using BDF6 to have the error of $\mathcal {O}(10^{-12})$ with parameters as in~\eqref{eqn:paras}. To have the error of $\mathcal {O}\big(10^{-5}\big)$, it takes about $0.639$s with $M=5\times 10^3$ and $\Delta x=-0.2$. Work to improve the speed of the spectral discretization is ongoing.\looseness=-1

Using the same machine, Bloemendal's code (see~\cite[Section 6]{Bloemendal2011FiniteRP}), which uses \textsc{Mathematica}'s \texttt{NDSolve}, takes approximately $3$ seconds to compute the approximation to the function $H$. We point out that \textsc{Mathematica} 13.2, generates warnings from \texttt{NDSolve} regarding the errors in the approximate solution. The accuracy of this approach, while simple, appears to be limited to a~maximum of 7 digits. Yet, the fact that this is indeed so simple, and works, is an important reminder of the conceptual simplicity of this representation of the Tracy--Widom distribution function.

Our methods expand upon this, providing both the cdfs and pdfs as output and producing high-accuracy interpolants while leaving the trade-off between accuracy and speed to the user's discretion. And since we consider the general-$\beta$ Tracy--Widom distribution functions as important nonlinear special functions, developing methods, with the highest possible accuracy, is critical. See Section~\ref{sec:outlook} for some thoughts on further improvements.

\section{Additional numerical results}\label{s:results}

In this section, we present additional plots to demonstrate the power and flexibility of the code.

\subsection{Comparison with large random matrices}

We verify numerically that the pdf generated by our algorithm agrees with the model presented by Dumitriu and Edelman~\cite{article}. Recall that $H_{n}^{\beta}$ in~\eqref{eqn:matrix} has~\eqref{eqn:density} as the jpdf for its eigenvalues, and the distribution of its largest eigenvalue, after rescaling, converges to $F_{\beta}$ for any $\beta>0$ as $n\to \infty$.

Histograms in Figure~\ref{fig:veri} are the normalized histograms for $n^{1/6}\big(\lambda_{\text{max}}\big(H^{\beta}_{n}\big)-2\sqrt{n}\big)$, where $\lambda_{\text{max}}\big(H^{\beta}_{n}\big)$ denotes the largest eigenvalue of the
$\beta$-Hermite ensemble $H^{\beta}_{n}$.

\begin{figure}[th!]\vspace*{-3mm}
 \centering
 \subfloat[\vspace*{-5mm}\centering $\beta=3$] {{\includegraphics[width=0.40\linewidth]{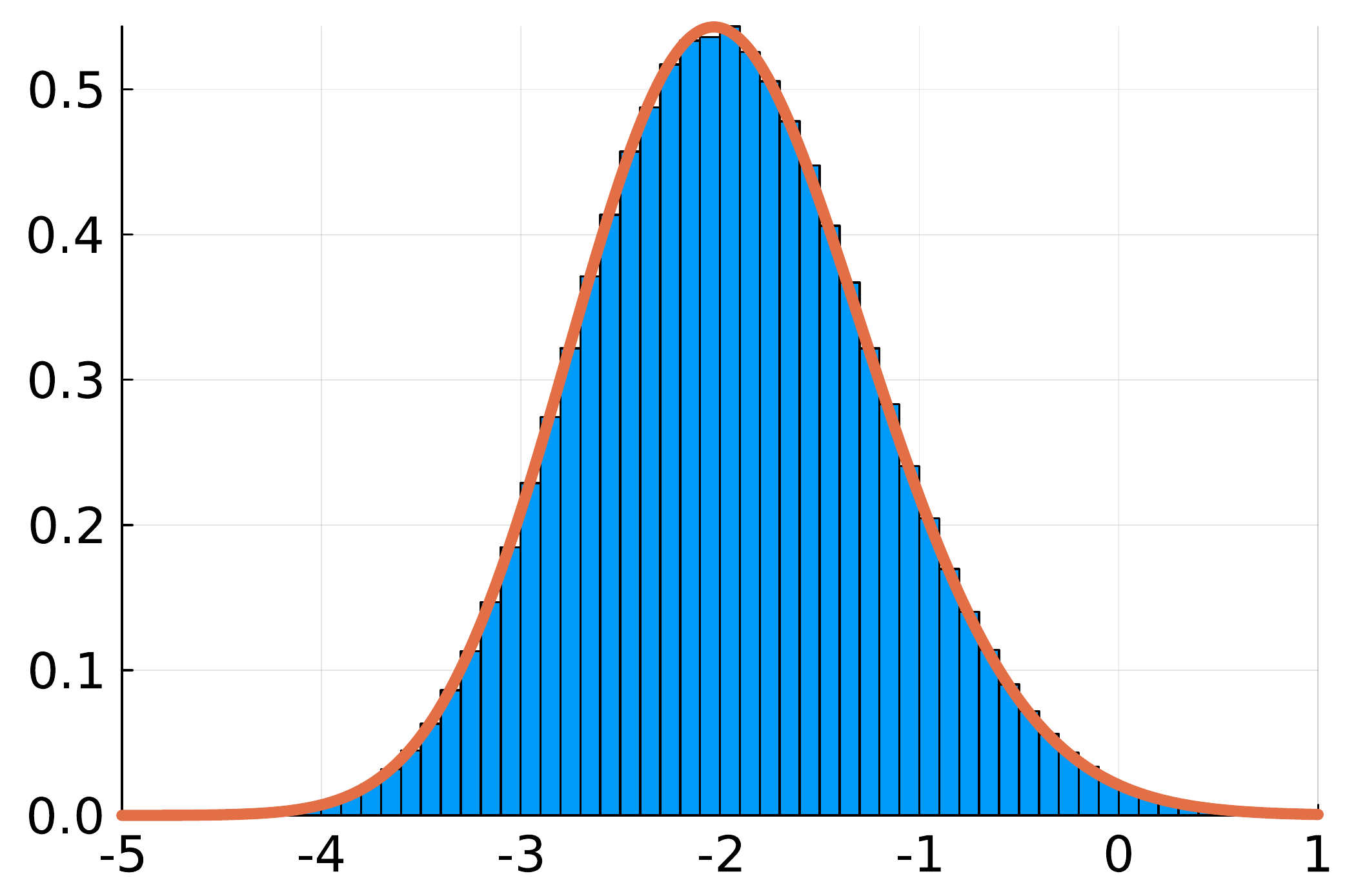} }}%
 \put(-200,55){\rotatebox{90}{\color{black}\small $F_{3}'(x)$}}
 \put(-90,-5){\color{black}\small $x$}
 \qquad\qquad
 \vspace*{6mm}
 \subfloat[\vspace*{-5mm}\centering $\beta=5$ ]{{\includegraphics[width=0.40\linewidth]{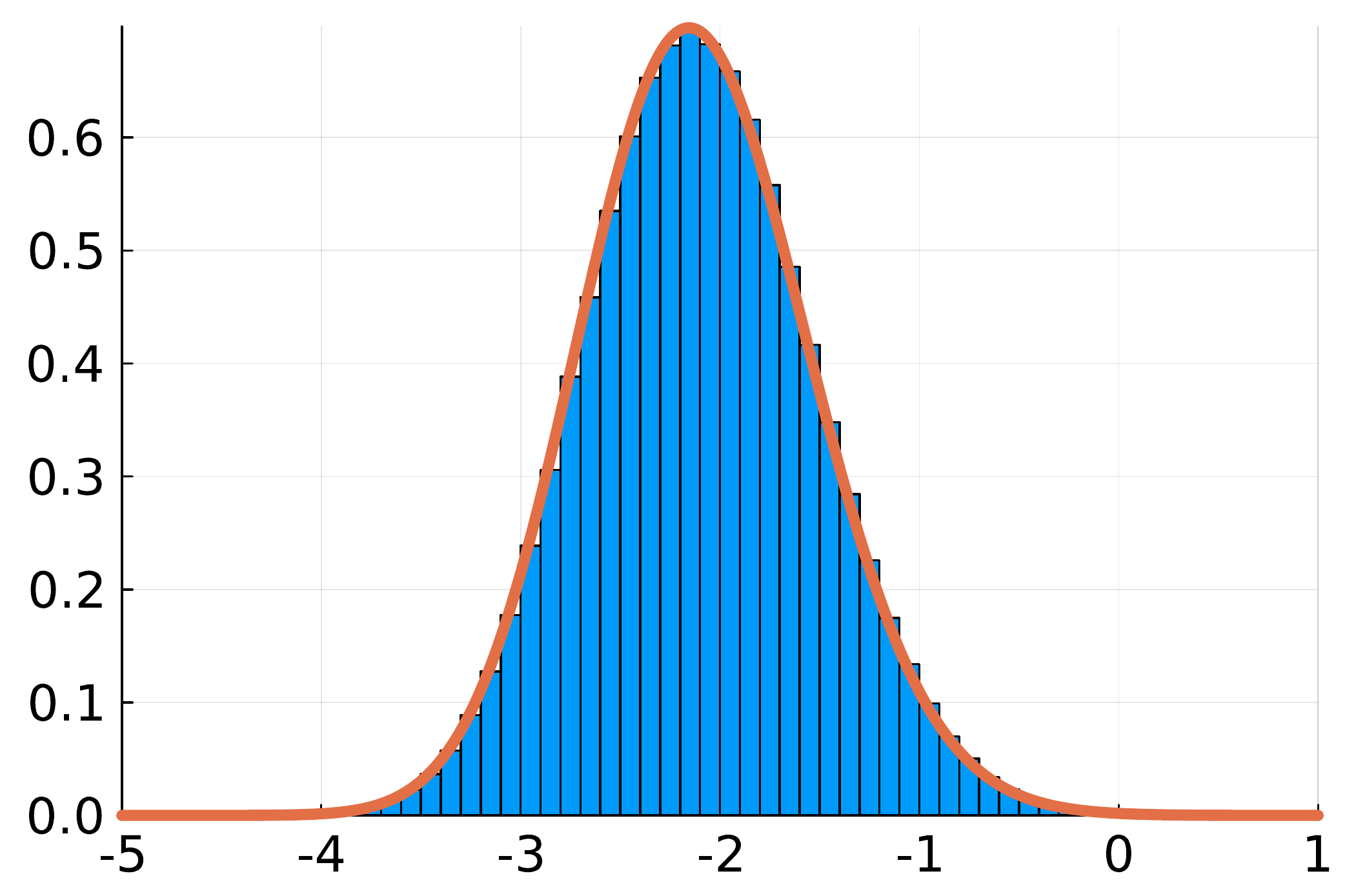} }}%
 \put(-200,55){\rotatebox{90}{\color{black}\small $F_{5}'(x)$}}
 \put(-90,-5){\color{black}\small $x$}
 \qquad
 \vspace*{6mm}
 \subfloat[\vspace*{-5mm}\centering $\beta=6$ ]{{\includegraphics[width=0.40\linewidth]{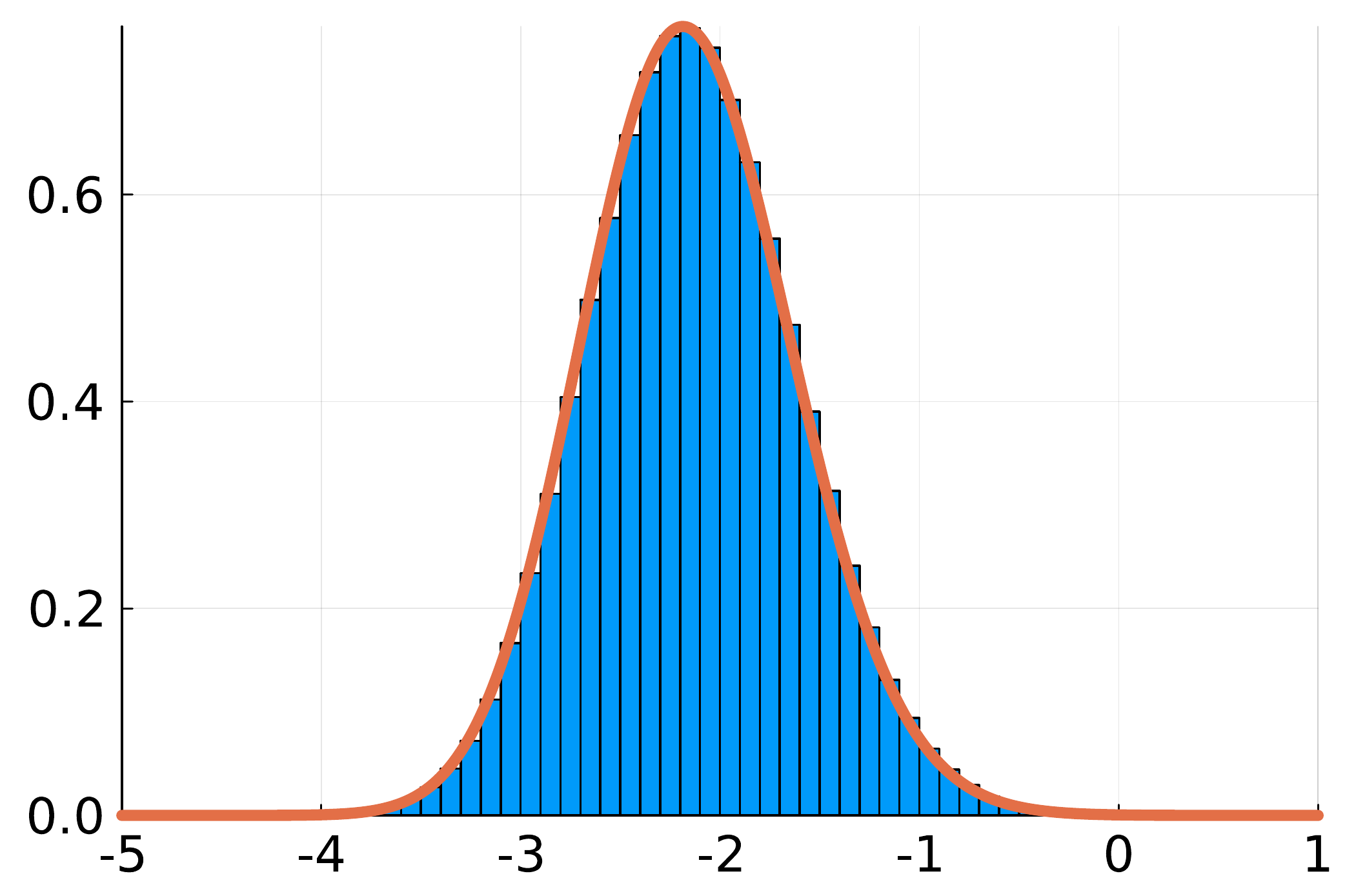} }}%
 \put(-200,55){\rotatebox{90}{\color{black}\small $F_{6}'(x)$}}
 \put(-90,-5){\color{black}\small $x$}
 \qquad\qquad
 \subfloat[\vspace*{-5mm}\centering $\beta=7$ ]{{\includegraphics[width=0.40\linewidth]{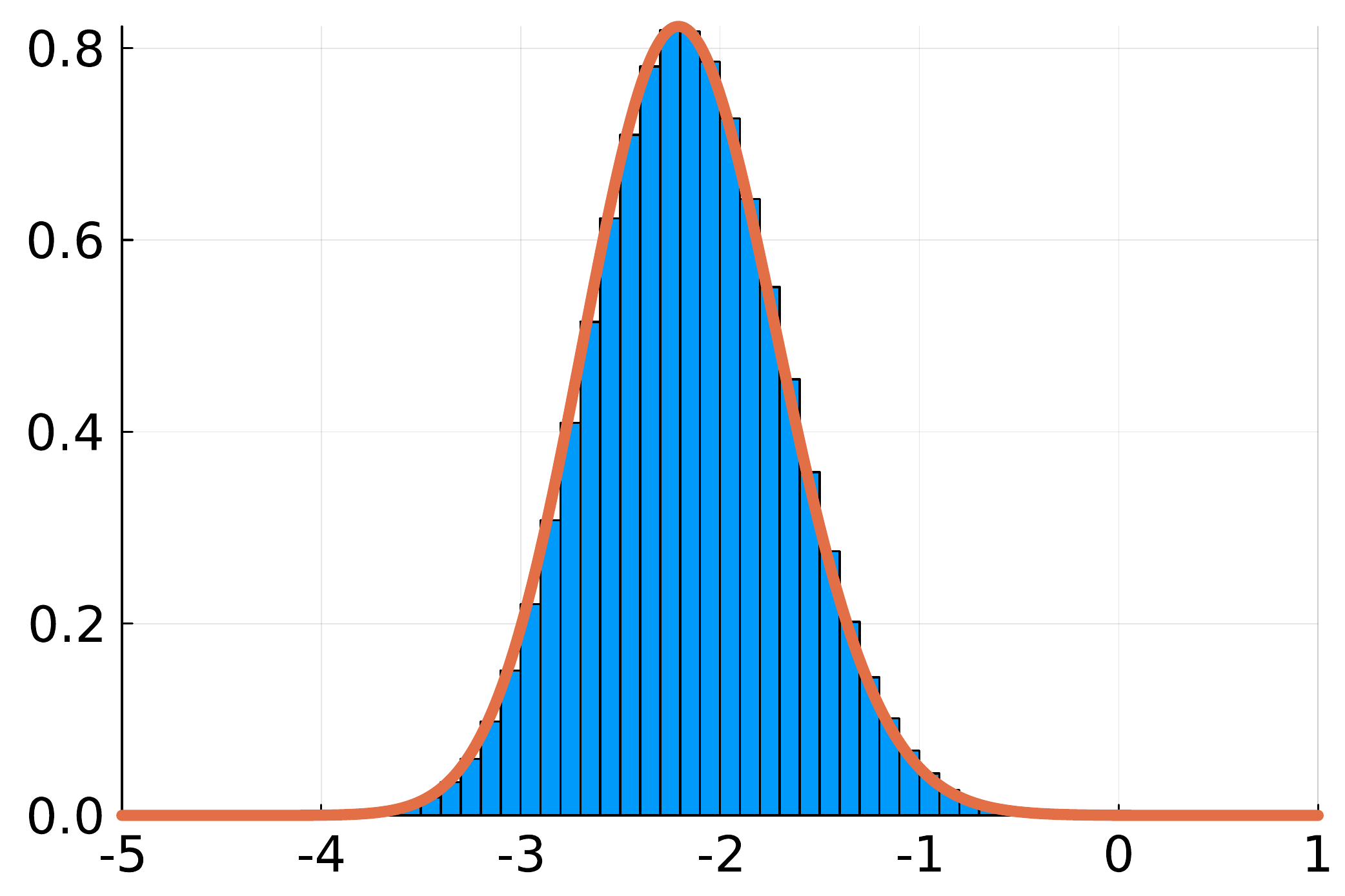} }}%
 \put(-200,55){\rotatebox{90}{\color{black}\small $F_{7}'(x)$}}
 \put(-90,-5){\color{black}\small $x$}
 \caption{The histograms for $n^{1/6}\big(\lambda_{\text{max}}\big(H^{\beta}_{n}\big)-2\sqrt{n}\big)$ using $10^6$ samples with $n=10^4$. The density~$F'_{\beta}(x)$ is generated by \texttt{TW}$(\beta; \texttt{pdf=true})$ for $\beta=3,5,6,7$. The small positive bias in each histogram can be improved by using larger values of $n$.}\label{fig:veri}
\end{figure}

\begin{figure}[th!]\vspace*{-5mm}
 \centering
 \subfloat[\centering $\beta=3$] {{\includegraphics[width=0.40\linewidth]{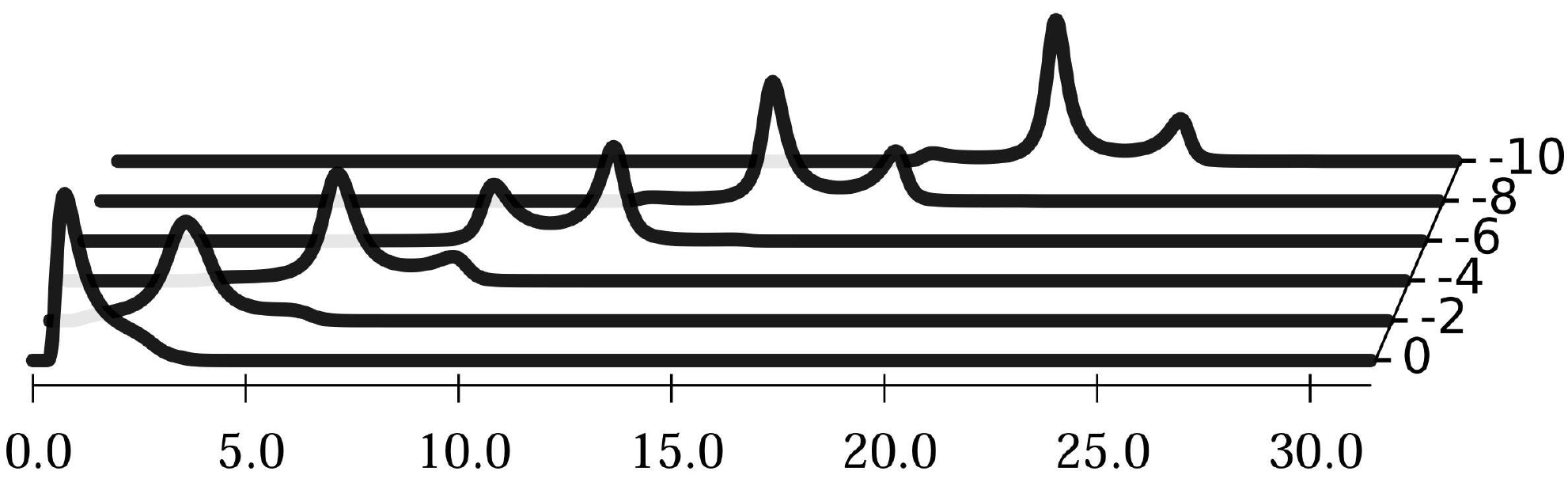} }}%
 \put(-120,-2){\color{black}\small $\theta$}
 \put(-6,28){\color{black}\small $x$}
 \qquad\qquad
 \subfloat[\centering $\beta=5$ ]{{\includegraphics[width=0.40\linewidth]{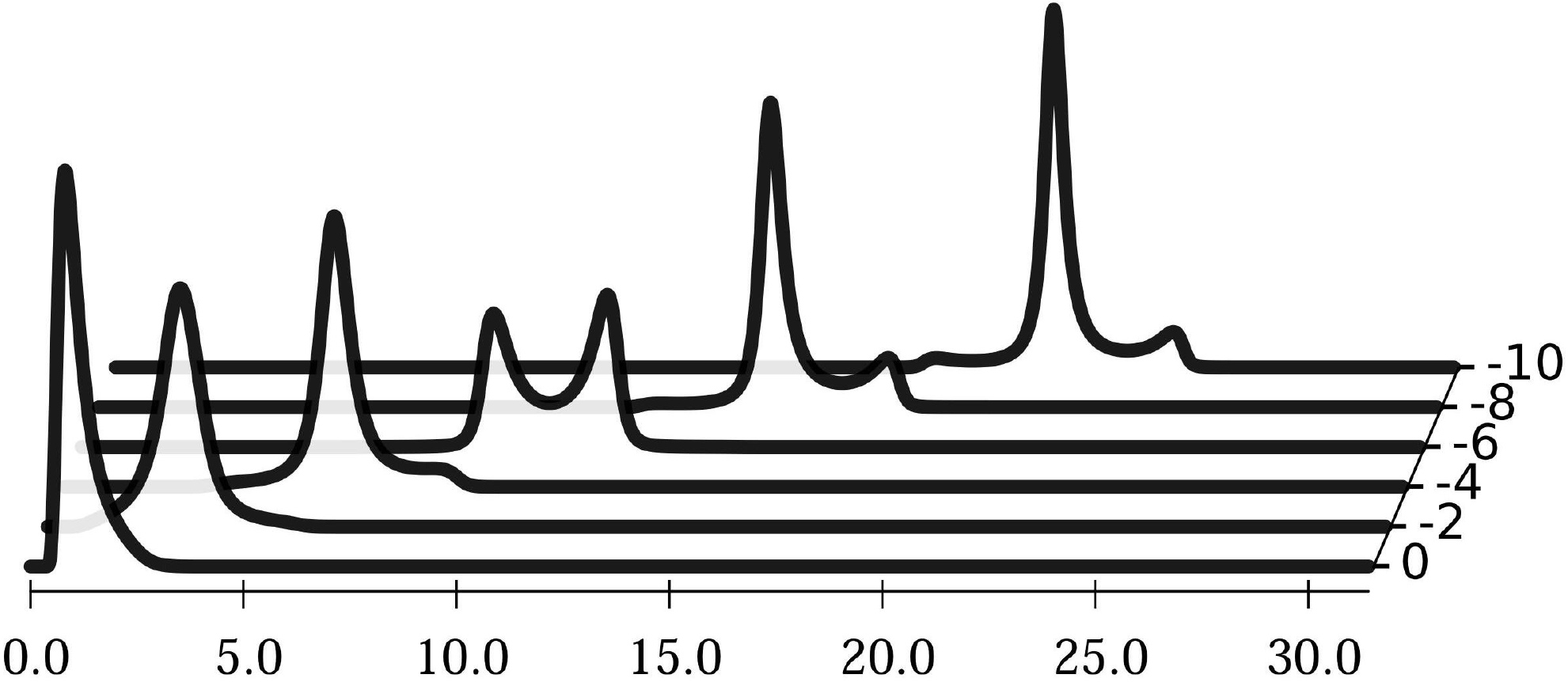} }}%
 \put(-120,-2){\color{black}\small $\theta$}
 \put(-6,28){\color{black}\small $x$}
 \qquad
 \subfloat[\centering $\beta=6$ ]{{\includegraphics[width=0.40\linewidth]{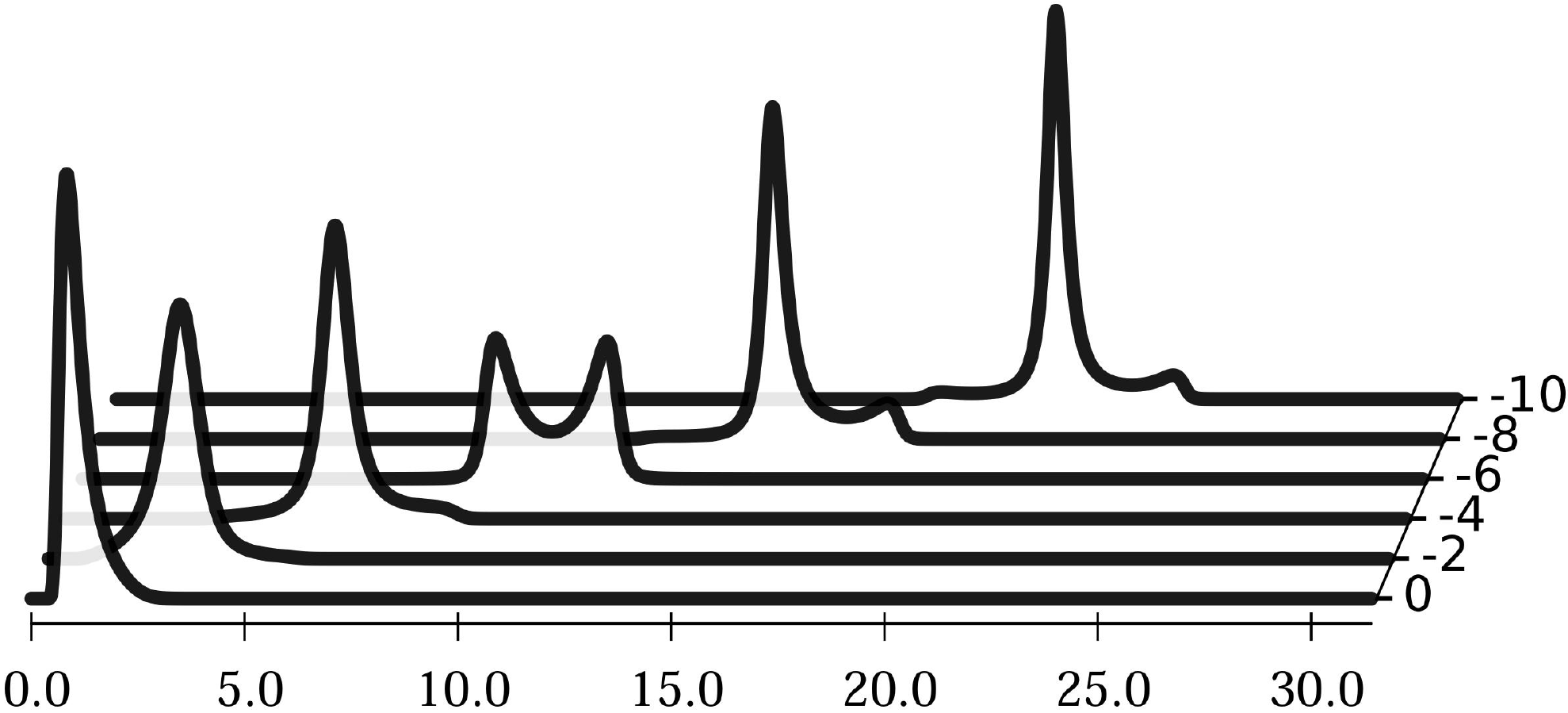} }}%
 \put(-120,-2){\color{black}\small $\theta$}
 \put(-6,28){\color{black}\small $x$}
 \qquad\qquad
 \subfloat[\centering $\beta=7$ ]{{\includegraphics[width=0.40\linewidth]{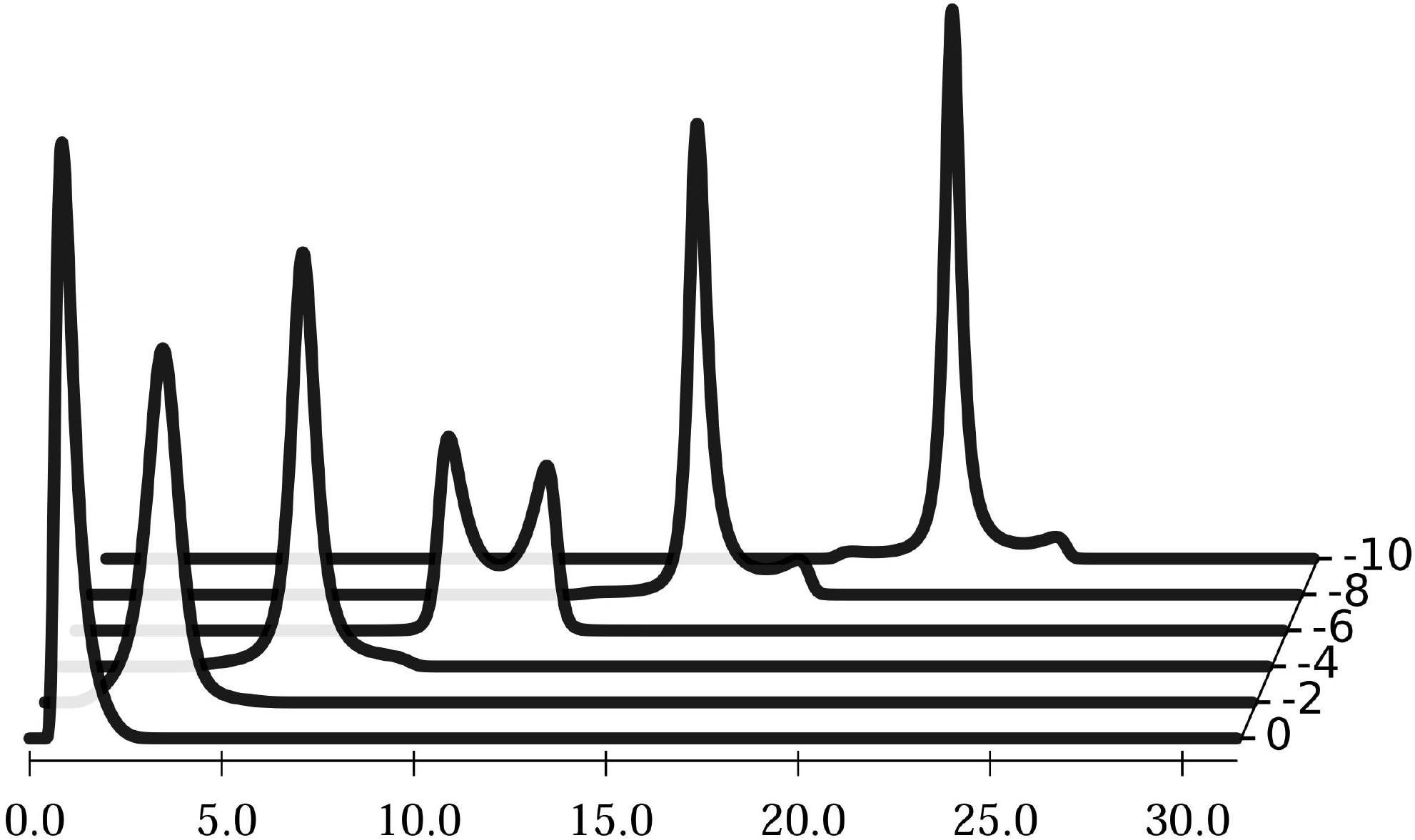} }}%
 \put(-120,-2){\color{black}\small $\theta$}
 \put(-6,28){\color{black}\small $x$}
 \caption{The evolution of $\rho(x,\theta):=\frac {\partial H}{\partial \theta}$ as $x$ decreases from $x=0$ to $-10$ with step size $=-2$ for $\beta=3,5,6,7$. Finite-difference discretization with trapezoidal method is used on~\eqref{spect} with $\Delta x=-10^{-3}$ and $M=10^4$ for $\theta\in [0,10\pi]$.}\label{fig:pp}
\end{figure}

\subsection[Evolution of the density rho(x,theta):=partial H(x,theta)/partial theta]{Evolution of the density $\boldsymbol{\rho(x,\theta):=\partial H(x,\theta)/\partial \theta}$}

Figure~\ref{fig:pp} shows the waterfall plots of the approximation of $\rho(x,\theta):=\partial {H}(x,\theta)/\partial{\theta}$ for $\theta\in [0,10\pi]$ and $x=0,-2,\dots,-10$ using finite-difference discretization with trapezoidal method on~\eqref{spect} with $\theta_{M}=10\pi$ and $M=10^4$. Values of the other parameters are the same as in~\eqref{eqn:paraf}. As the value of $x$ decreases, the density of the solution propagates to the right.

Figure~\ref{fig:ppp} shows the contour plots of the approximation of $H(x,\theta)$ using finite-difference discretization with trapezoidal method with values of the parameters given as in~\eqref{eqn:paraf}. For each contour plot, the initial condition $H(x_{0},\theta)$ is found along the top of the plot and the approximate Tracy--Widom distribution, $F_{\beta}(x)\approx H(x,\theta_{M})$, is obtained on the right-hand side of the plot.

\subsection{Density of the Tracy--Widom distribution}
Figure~\ref{p:p}\,(a) shows the plot of the approximation of $F_{\beta}'$ for $\beta=1$ to $10$ and $x\in[-6,4]$ using finite-difference discretization with trapezoidal method with values of the other parameters as in~\eqref{eqn:paraf}. As we can see from the plot, as the value of $\beta$ increases, $F_{\beta}'$ becomes more concentrated, and its peak moves leftwards (See Appendix~\ref{apx:1} for a discussion of the exact limiting behavior as $\beta\to \infty$).

Figure~\ref{p:p}\,(b) is a two-dimensional version of Figure~\ref{p:p}\,(a), which is also generated using the finite-difference discretization with trapezoidal method. It provides a closer view of $F_{\beta}'$ for $\beta=1$ to $4$ with step size $=0.2$. The red curves from right to left correspond to $\beta=1,2,4$ respectively. The black curves show $F_{\beta}'$ for the other values of $\beta$.

\begin{figure}[th!]\vspace{-2mm}
 \centering
 \subfloat[\vspace*{-5mm}\centering $\beta=3$] {{\includegraphics[width=0.43\linewidth]{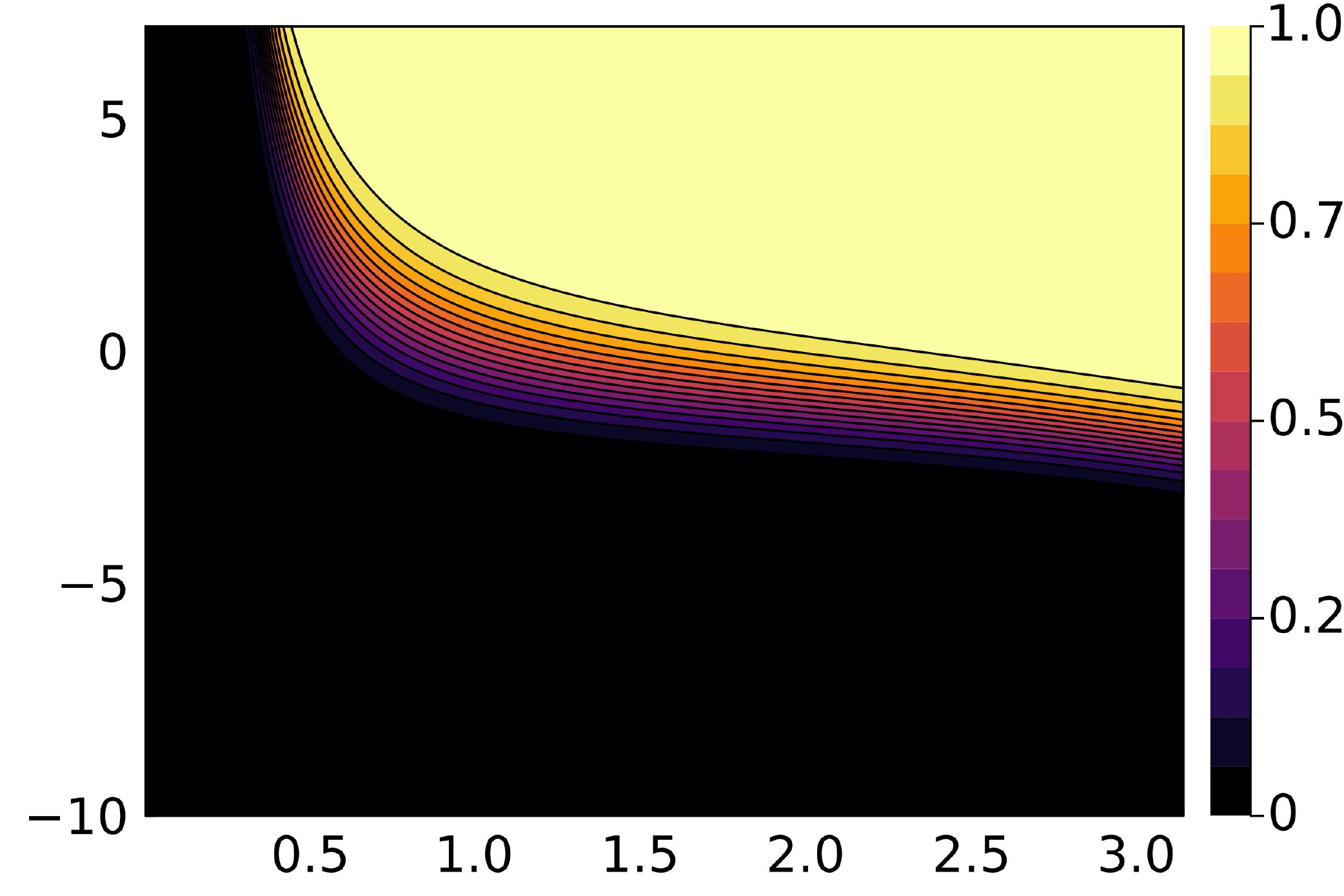} }}%
 \put(-98,-5){\color{black}\small $\theta$}
 \put(-200,63){\rotatebox{90}{\color{black}\small $x$}}
 \qquad
 \vspace*{5mm}
 \subfloat[\vspace*{-5mm}\centering $\beta=5$ ]{{\includegraphics[width=0.43\linewidth]{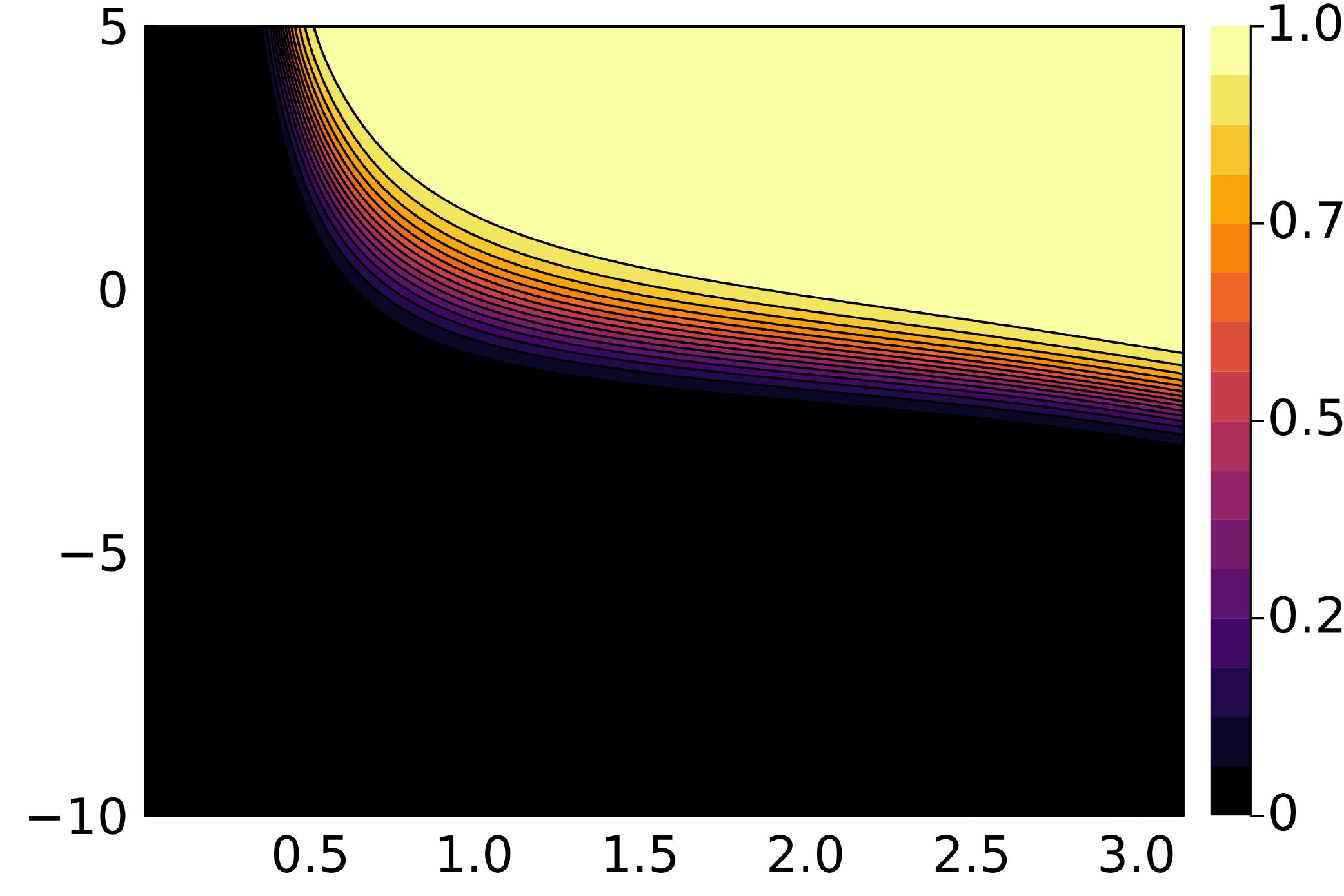} }}%
 \put(-98,-5){\color{black}\small $\theta$}
 \put(-200,63){\rotatebox{90}{\color{black}\small $x$}}
 \qquad
 \subfloat[\vspace*{-5mm}\centering $\beta=6$ ]{{\includegraphics[width=0.43\linewidth]{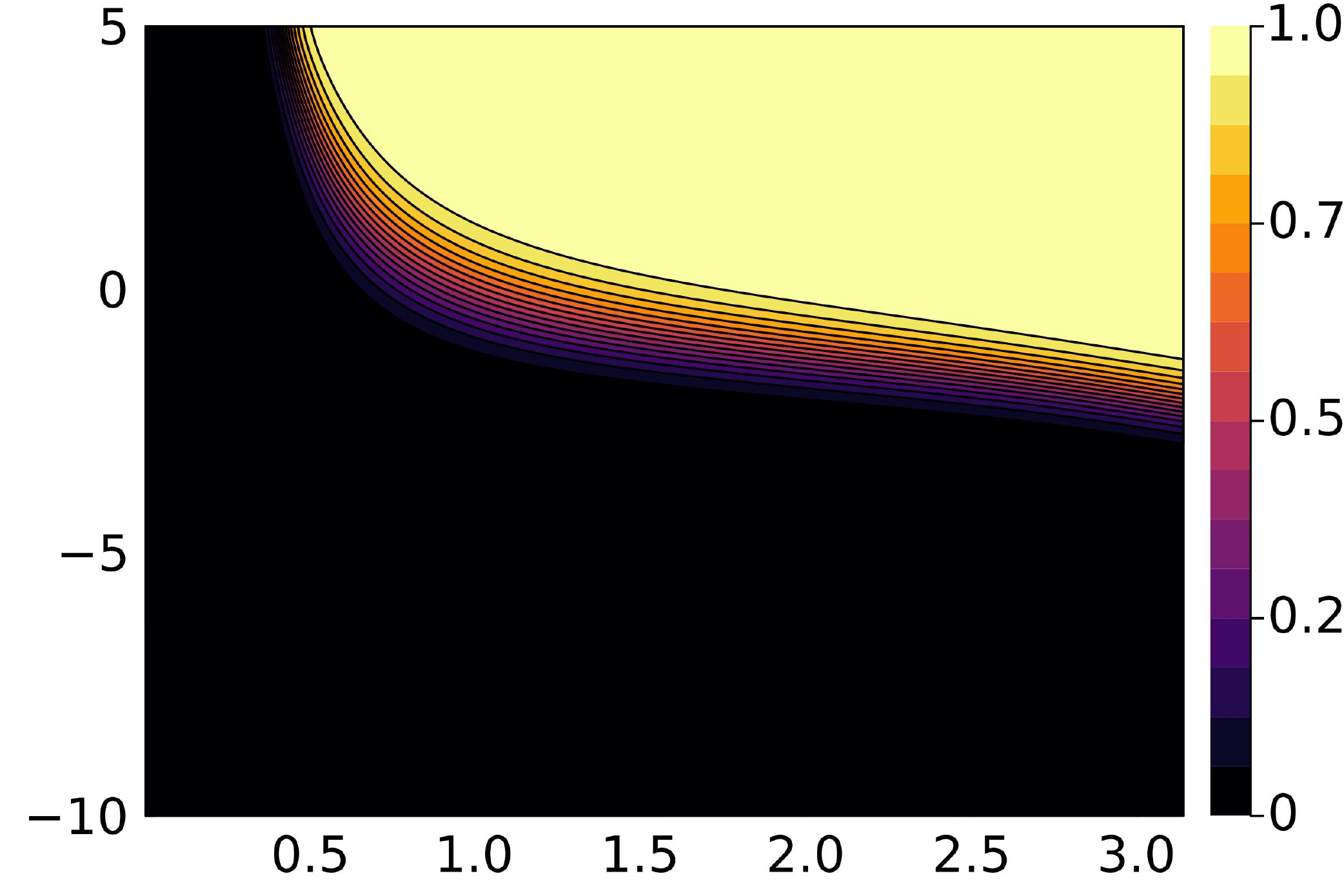} }}%
 \put(-98,-5){\color{black}\small $\theta$}
 \put(-200,63){\rotatebox{90}{\color{black}\small $x$}}
 \qquad
 \vspace*{5mm}
 \subfloat[\vspace*{-5mm}\centering $\beta=7$ ]{{\includegraphics[width=0.43\linewidth]{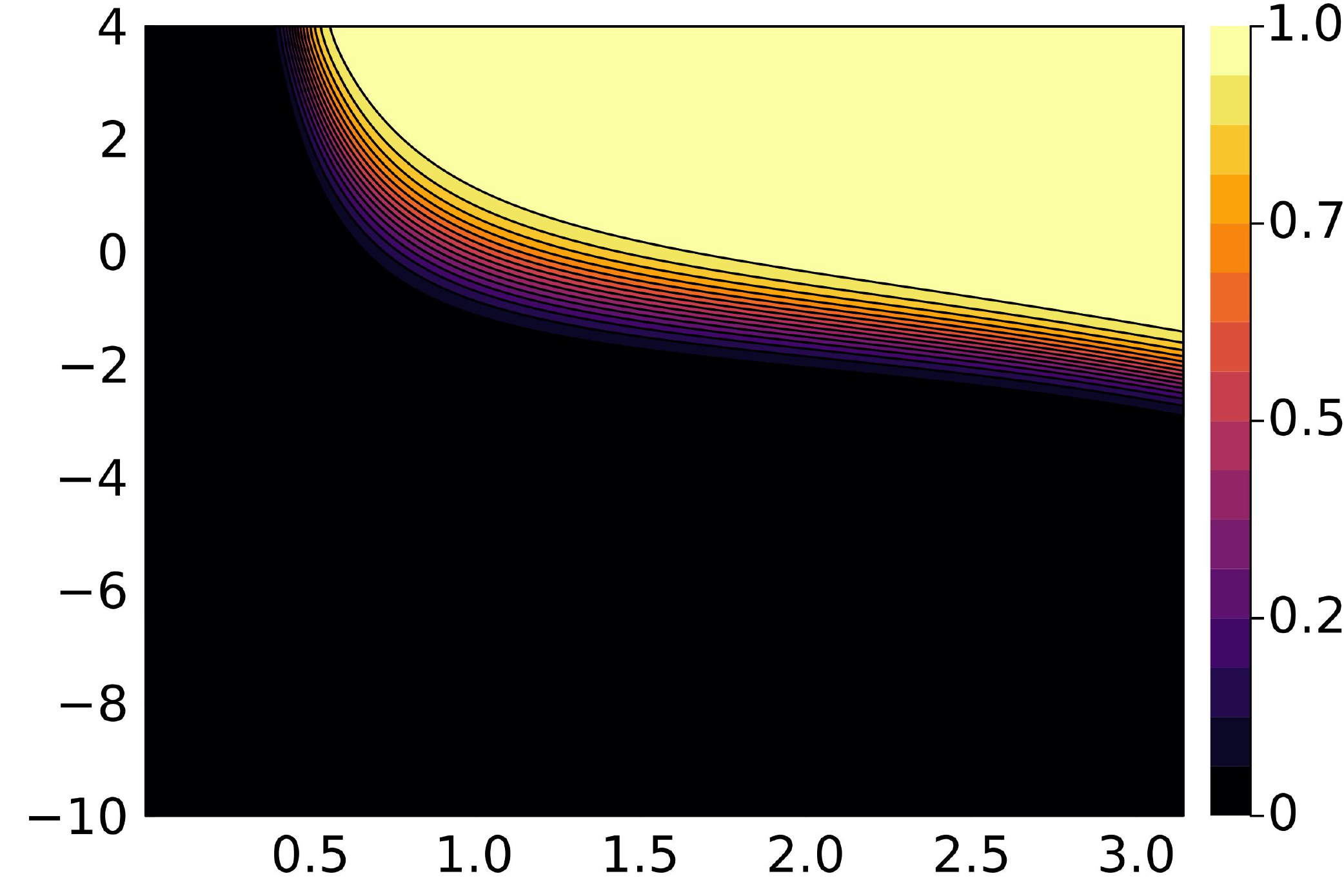} }}%
 \put(-98,-5){\color{black}\small $\theta$}
 \put(-200,63){\rotatebox{90}{\color{black}\small $x$}}
 \caption{The evolution of $H(x,\theta)$ as $x$ decreases from $\lfloor 13/\sqrt{\beta} \rfloor$ to $-10$ for $\beta=3,5,6,7$. Finite-difference discretization with trapezoidal method is used with $\Delta x=-10^{-3}$ and $M=10^3$.}%
 \label{fig:ppp}
 \end{figure}

\begin{figure}[th!]\vspace{-7mm}
 \centering
 \subfloat[\centering $\beta=1$ to $10$] {{\raisebox{2mm}{\includegraphics[width=0.44\linewidth]{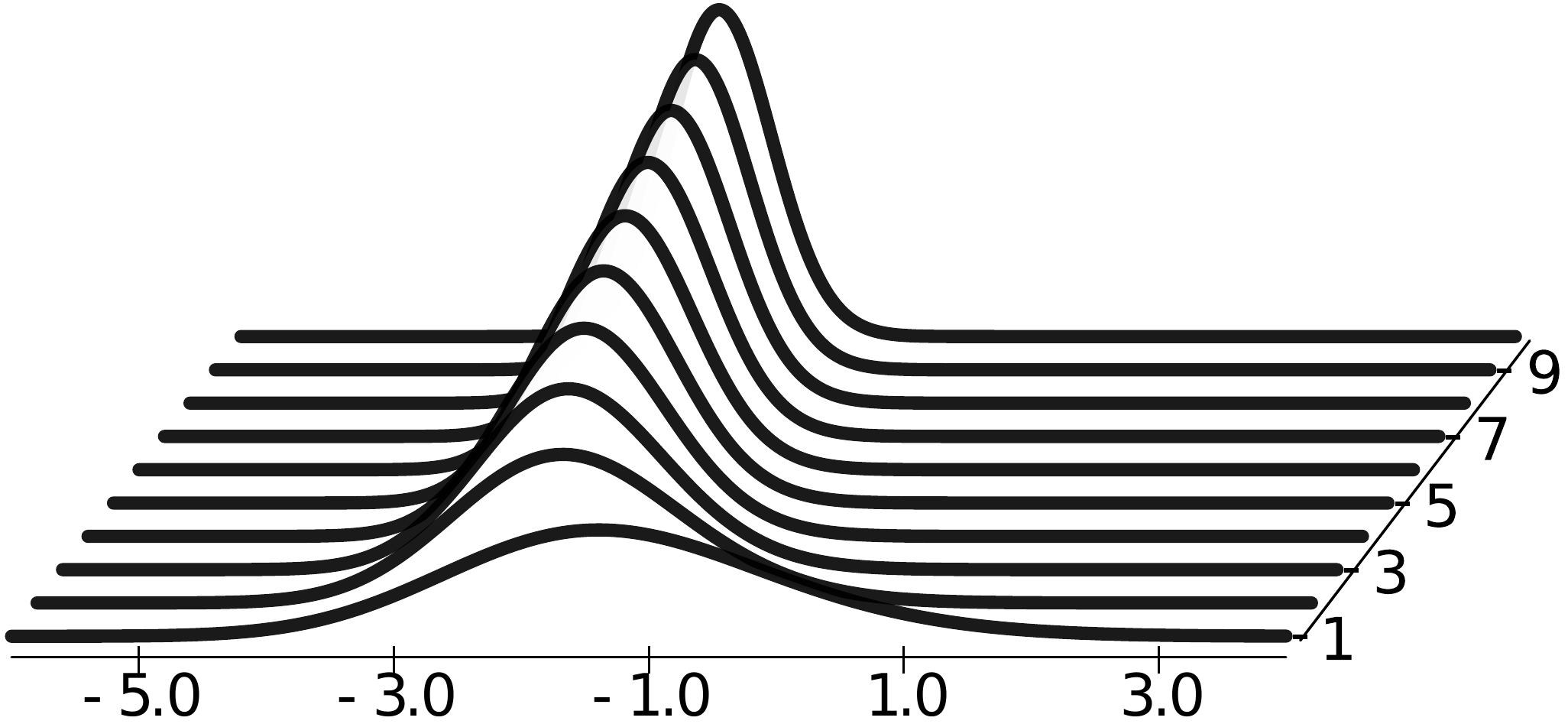}}}}%
 \put(-120,0){\color{black}\small $x$}
 \put(-10,32){\color{black}\small $\beta$}
 \qquad\
 \subfloat[\vspace*{-5mm}\centering $\beta=1$ to $4$]{{\raisebox{1mm}{\includegraphics[width=0.44\linewidth]{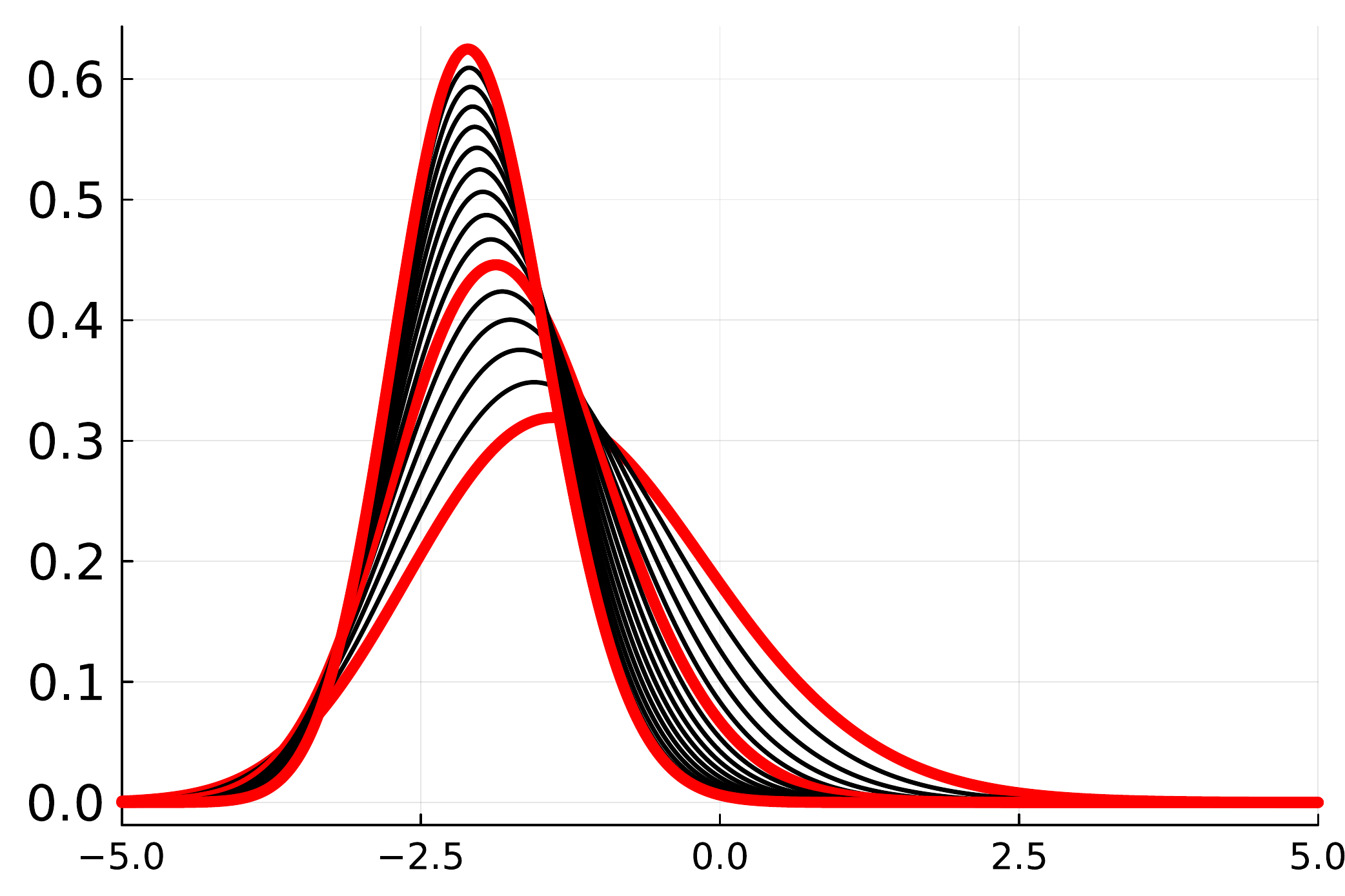}} }}%
 \put(-220,63){\rotatebox{90}{\color{black}\small $F_{\beta}'(x)$}}
 \put(-100,-4){\color{black}\small $x$}
 \caption{Approximation of the density of the Tracy--Widom distribution for different values of $\beta$ using the finite-difference discretization with trapezoidal method with values for the other parameters as in~\eqref{eqn:paraf}.}%
 \label{p:p}\vspace{-1mm}
\end{figure}

\subsection{Limiting densities of other eigenvalues}

\begin{figure}[th!]
 \centering
 \subfloat[\vspace*{-5.5mm}\centering $\beta=3$] {{\includegraphics[width=0.46\linewidth]{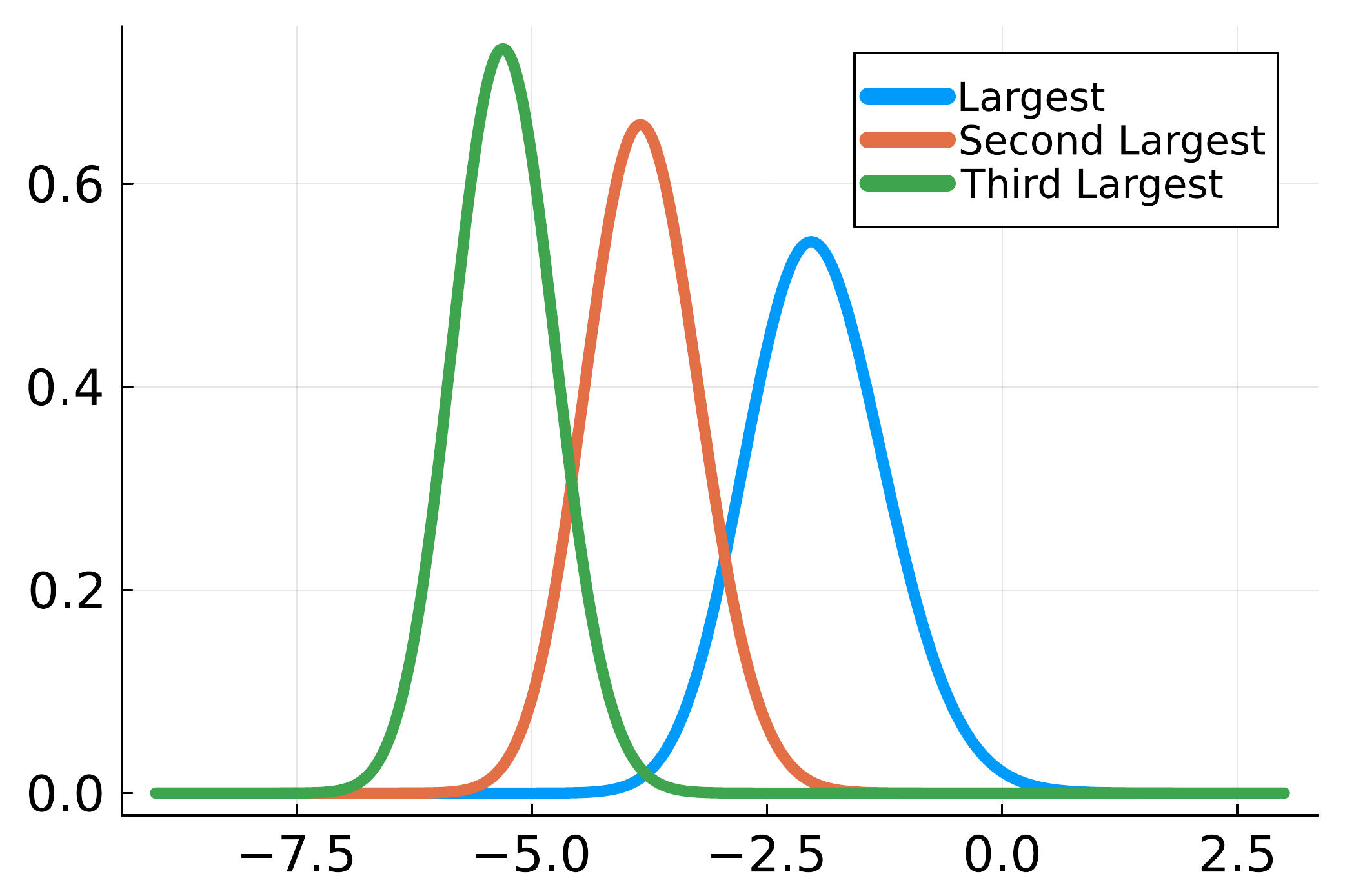} }}%
 \put(-103,-5){\color{black}\small $x$}
 \qquad
 \vspace*{5mm}
 \subfloat[\vspace*{-5.5mm}\centering $\beta=5$ ]{{\includegraphics[width=0.46\linewidth]{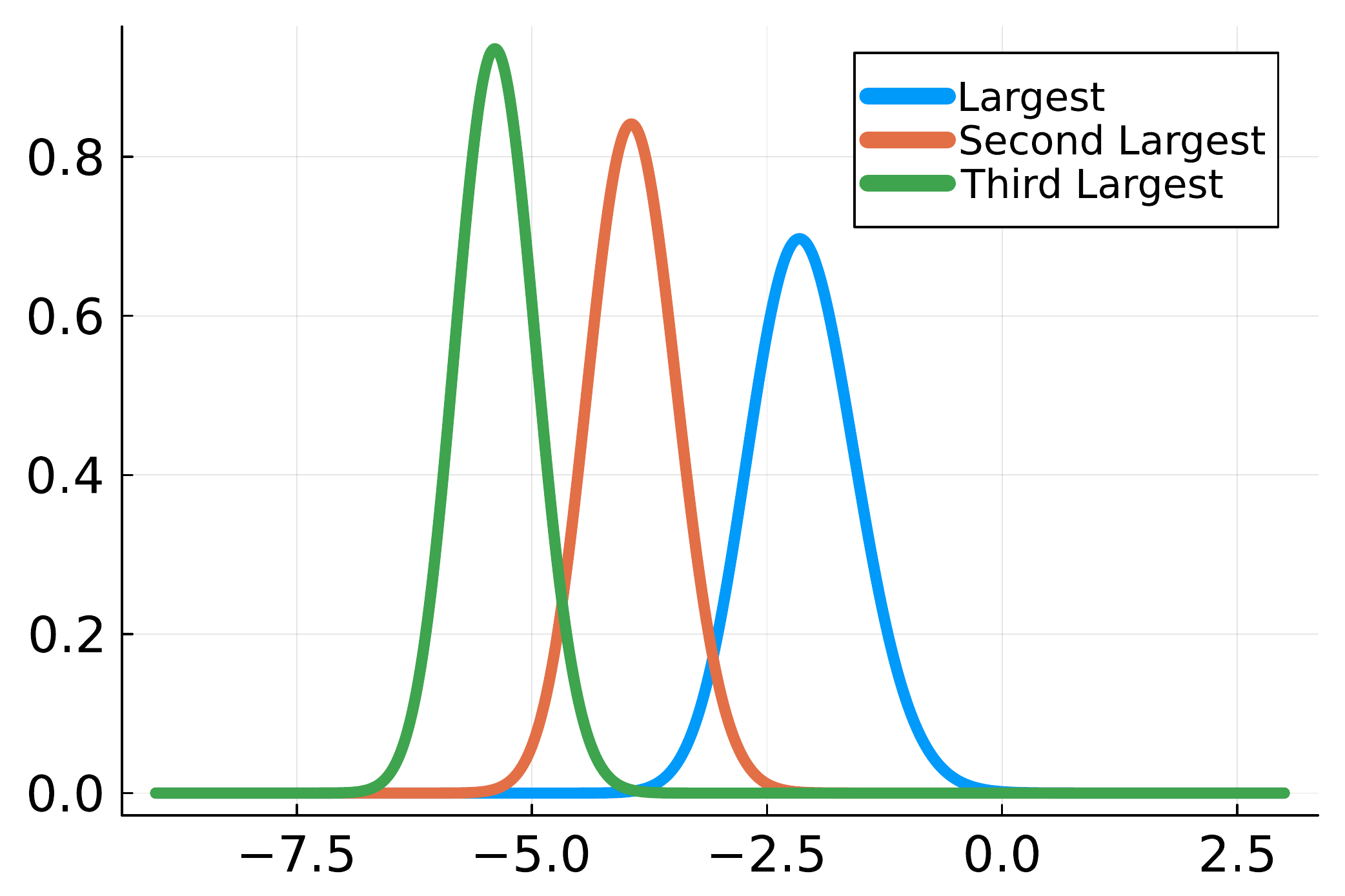} }}%
 \put(-103,-5){\color{black}\small $x$}
 \qquad
 \subfloat[\vspace*{-5.5mm}\centering $\beta=6$ ]{{\includegraphics[width=0.46\linewidth]{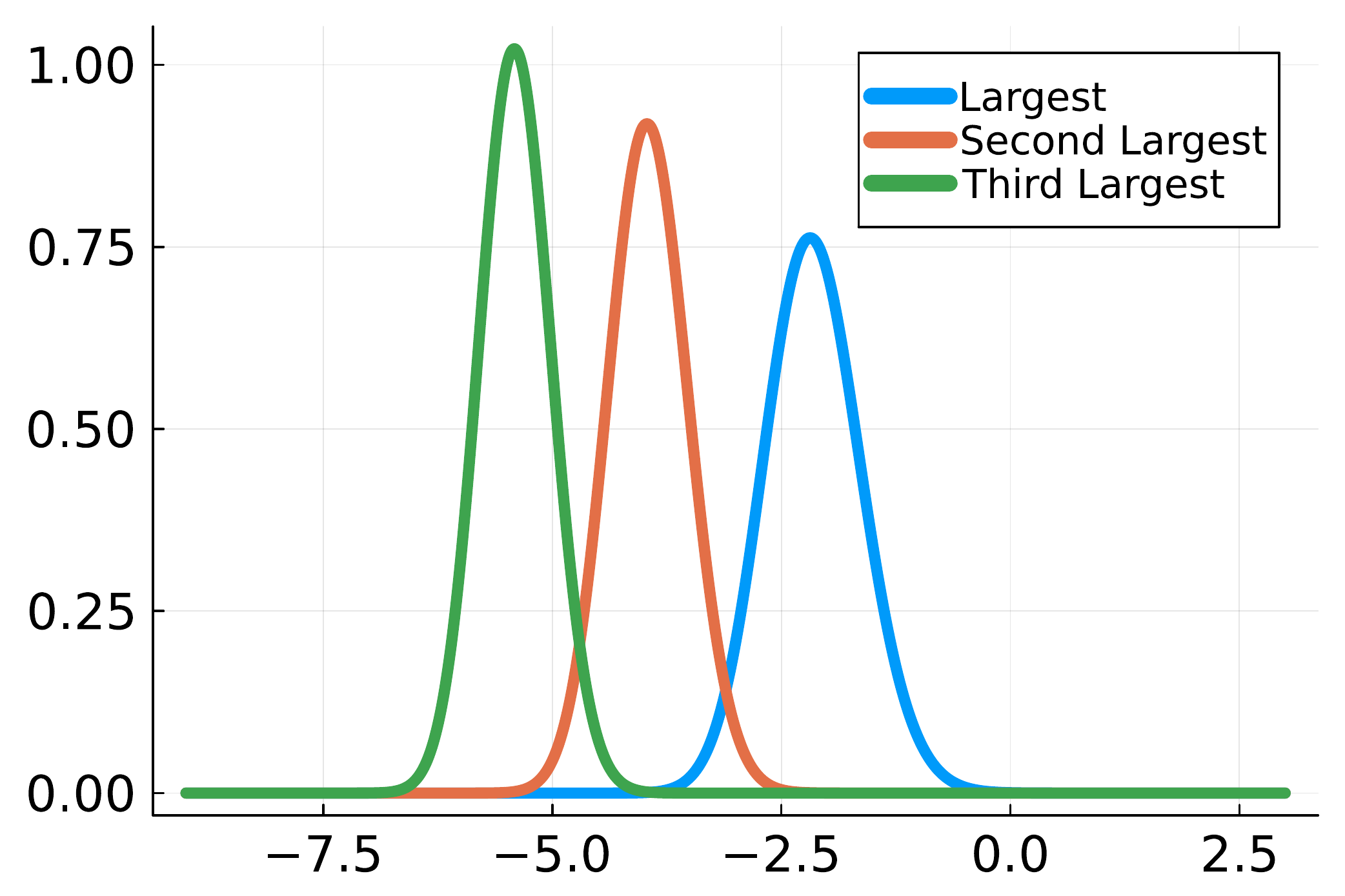} }}%
 \put(-103,-5){\color{black}\small $x$}
 \qquad
 \vspace*{5mm}
 \subfloat[\vspace*{-5.5mm}\centering $\beta=7$ ]{{\includegraphics[width=0.46\linewidth]{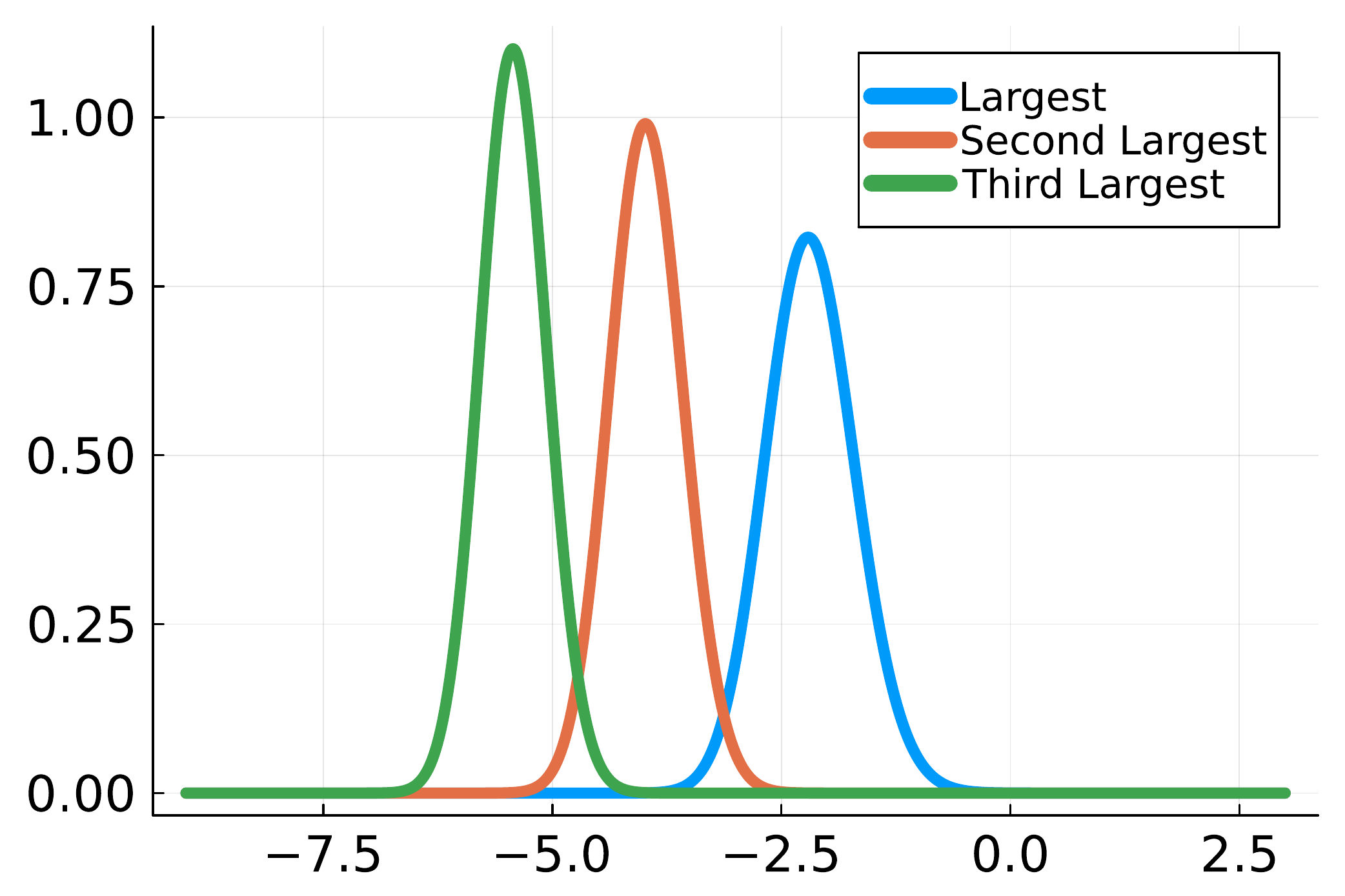} }}%
 \put(-103,-5){\color{black}\small $x$}
 \caption{Densities of $-\Lambda_{0}$, $-\Lambda_{1}$, and $-\Lambda_{2}$ of $\mathcal {H}_{\beta}$ for $\beta=3,5,6,7$. Finite-difference discretization with trapezoidal method is used with values of the parameters as in~\eqref{eqn:paraf}.}
 \label{p:p2}
\end{figure}

By~\cite[Theorem 2.4.3]{Bloemendal2011FiniteRP}, one finds the limiting density of the $k$th largest eigenvalue, after rescaling, of the $\beta$-Hermite ensemble $H^{\beta}_{n}$ at $\theta=k\pi$.

Using finite-difference discretization with trapezoidal method with values for the parameters as in~\eqref{eqn:paraf}, Figure~\ref{p:p2} shows the limiting densities of the largest three eigenvalues of $H^{\beta}_{n}$, namely,~$F'(\cdot,\pi)$,~$F'(\cdot,2\pi)$, and $F'(\cdot,3\pi)$ from right to left respectively. They can also be interpreted as the densities of $-\Lambda_{0}$, $-\Lambda_{1}$, and $-\Lambda_{2}$ of the stochastic Airy operator $\mathcal {H}_{\beta}$. As the value of $k$ increases, the limiting distribution becomes more concentrated.

\section{Outlook and open questions}\label{sec:outlook}

The error analysis in this paper was empirical, as a proof of convergence is elusive. It is likely that eigenvalue perturbation theory could be used to help show that the spectrum of the $x$-dependent families of matrices we consider remains close to, or inside of, the stability regions for the time-steppers we have chosen --- at least for sufficiently small time steps. A challenge here is to obtain quantitative bounds, making ``sufficiently small'' precise, and giving useful conclusions. Furthermore, even if the spectrum is understood, it is currently not known how to estimate the eigenvector condition numbers and pseudospectra of these families of matrices.

We also believe it to be possible to achieve better accuracy by adapting the global spectral method of Olver and Townsend~\cite{Townsend2015} and future work will be in this direction. A high-precision implementation of this idea could help corroborate conjectures about the tail behavior of the Tracy--Widom distribution.

\appendix

\section[The range of Delta x that causes instability]{The range of $\boldsymbol{\Delta x}$ that causes instability}\label{a:2}
\begin{figure}[t]
 \centering
 \subfloat[\vspace*{-6.5mm}\centering Error at $x=-2$]{{\includegraphics[width=0.46\linewidth]{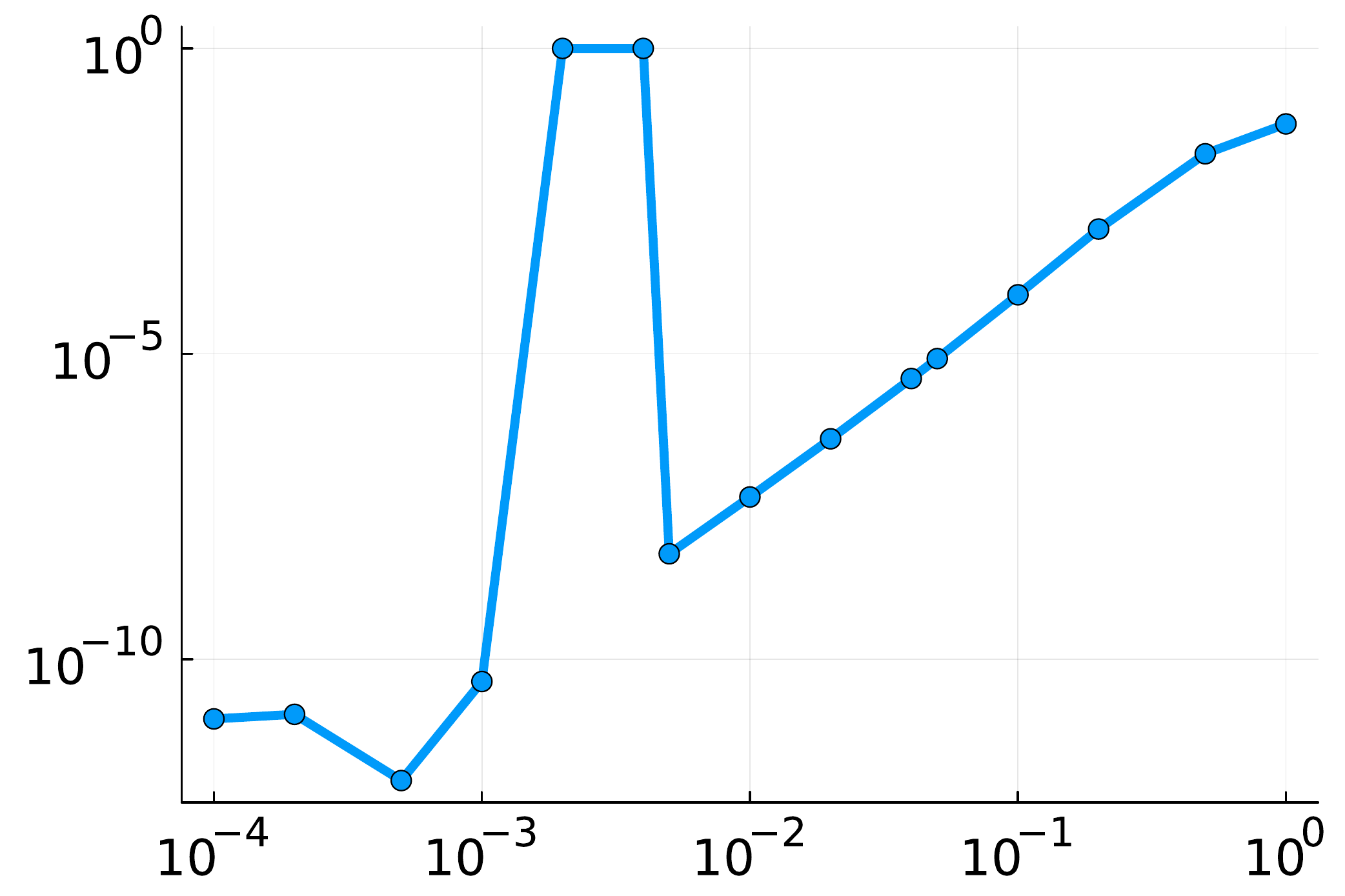} }}%
 \put(-105,-8){\color{black}\small $|\Delta x|$}
 \put(-225,55){\rotatebox{90}{\large Error}}
 \qquad\qquad
 \subfloat[\vspace*{-6.5mm}\centering Mean $\langle \delta\left(x,\Delta x\right)\rangle$]{{\includegraphics[width=0.46\linewidth]{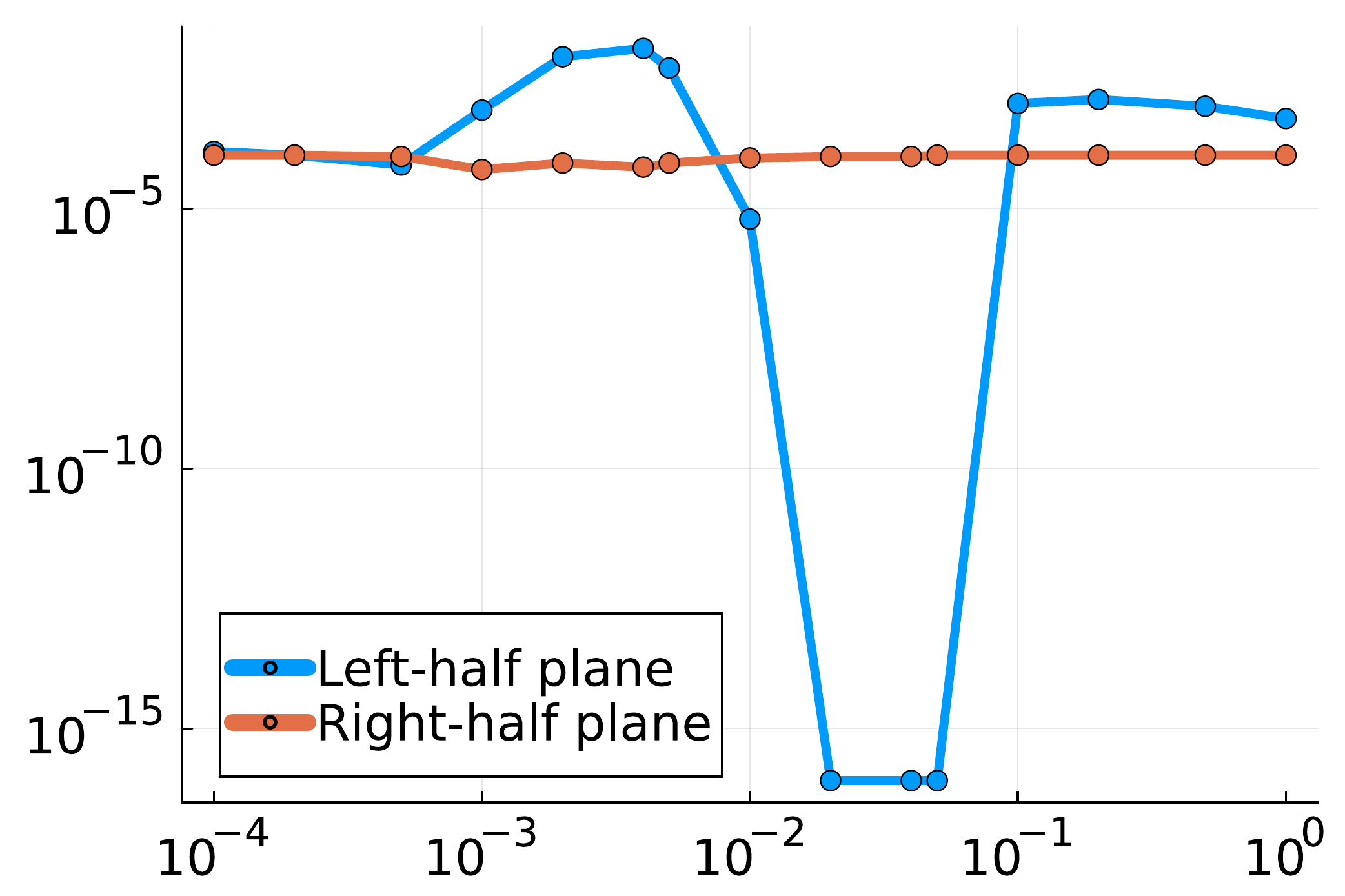} }}%
 \put(-107,-8){\color{black}\small $|\Delta x|$}
 \put(-225,40){\rotatebox{90}{\small $\langle \delta\left(x,\Delta x\right)\rangle$}}
 \qquad
 \vspace*{5mm}
 \subfloat[\vspace*{-6.5mm}\centering Mean $\langle \mu(x,\Delta x)\rangle$] {{\includegraphics[width=0.46\linewidth]{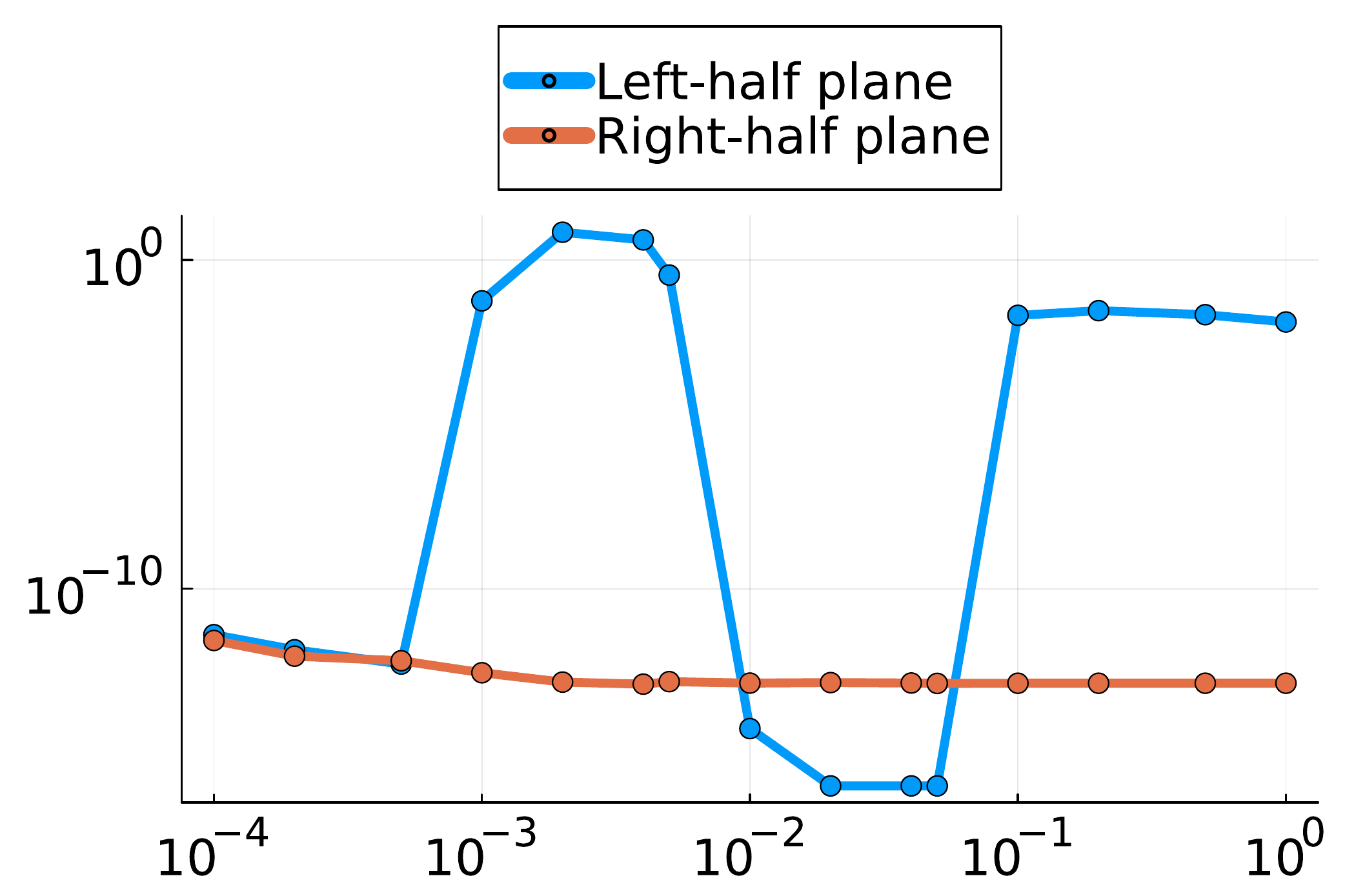} }}%
 \put(-107,-8){\color{black}\small $|\Delta x|$}
 \put(-225,40){\rotatebox{90}{\small $\langle\mu(x,\Delta x)\rangle$}}
 \caption{The non-monotonic convergence for BDF3 along with the mean fraction $\langle \delta\left(x,\Delta x\right)\rangle$ of the unstable eigenvalues and the mean value $\langle \mu(x,\Delta x)\rangle$. (a) \texttt{TW}($\beta\texttt{=}2$; $x_{0}=\lfloor 13/\sqrt{\beta}\rfloor$, \texttt{method="spectral"}, \texttt{step="bdf3"}, \texttt{interp=false}, $\Delta x$, $M\texttt{=}8000$) is implemented with $\Delta x =  -1,\allowbreak -0.5,-0.2,-0.1,-0.05,-0.04,-0.02,-0.01,-0.005,-0.004,
 -0.002,-0.001,-0.0005,-0.0002,\allowbreak -0.0001$.
 Measured errors are capped at one for readability.  (b)~The blue (red) dots correspond to the mean fraction $\langle \delta_{l}(x,\Delta x)\rangle (\langle \delta_{r}(x,\Delta x)\rangle )$ as defined in~\eqref{eqn:deltal} (\eqref{eqn:deltar}). (c)~The blue (red) dots correspond to the mean value $\langle \mu_{l}(x,\Delta x)\rangle (\langle \mu_{r}(x,\Delta x)\rangle )$ as defined in~\eqref{eqn:mul}(\eqref{eqn:mur}).}%
 \label{p:bdf3_non}
\end{figure}

\begin{figure}[!ht]\vspace{-2mm}
 \centering
 \subfloat[\vspace*{-5.5mm}\centering Zoomed-out view] {{\includegraphics[width=0.34\linewidth]{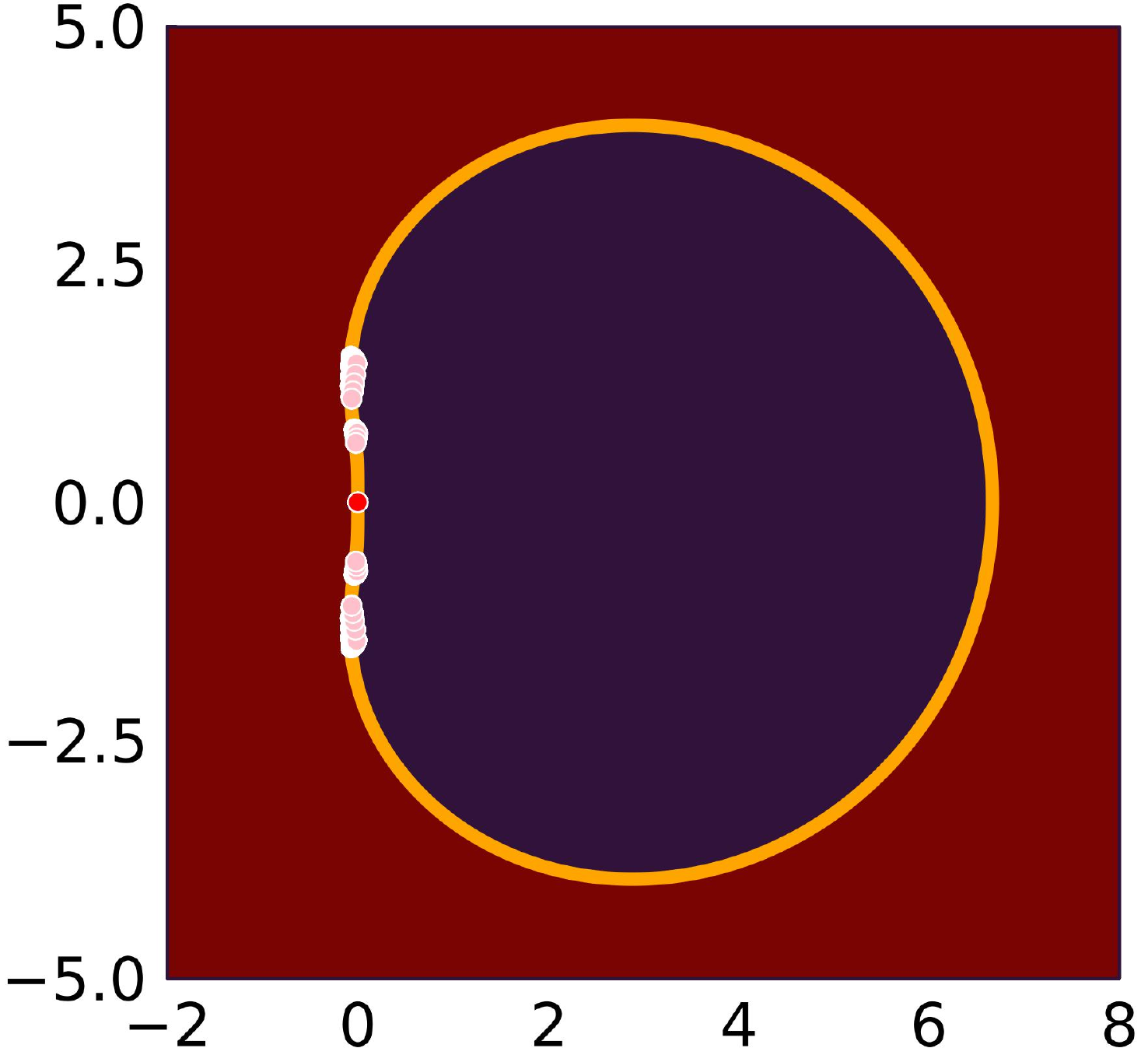} }}%
 \put(-168,67){\rotatebox{90}{\color{black}\small $\operatorname{Im}(z)$}}
 \put(-85,-7){\color{black}\small $\operatorname{Re}(z)$}
 \qquad\qquad
 \vspace*{5mm}
 \subfloat[\vspace*{-5.5mm}\centering Zoomed-in view]{{\includegraphics[width=0.33\linewidth]{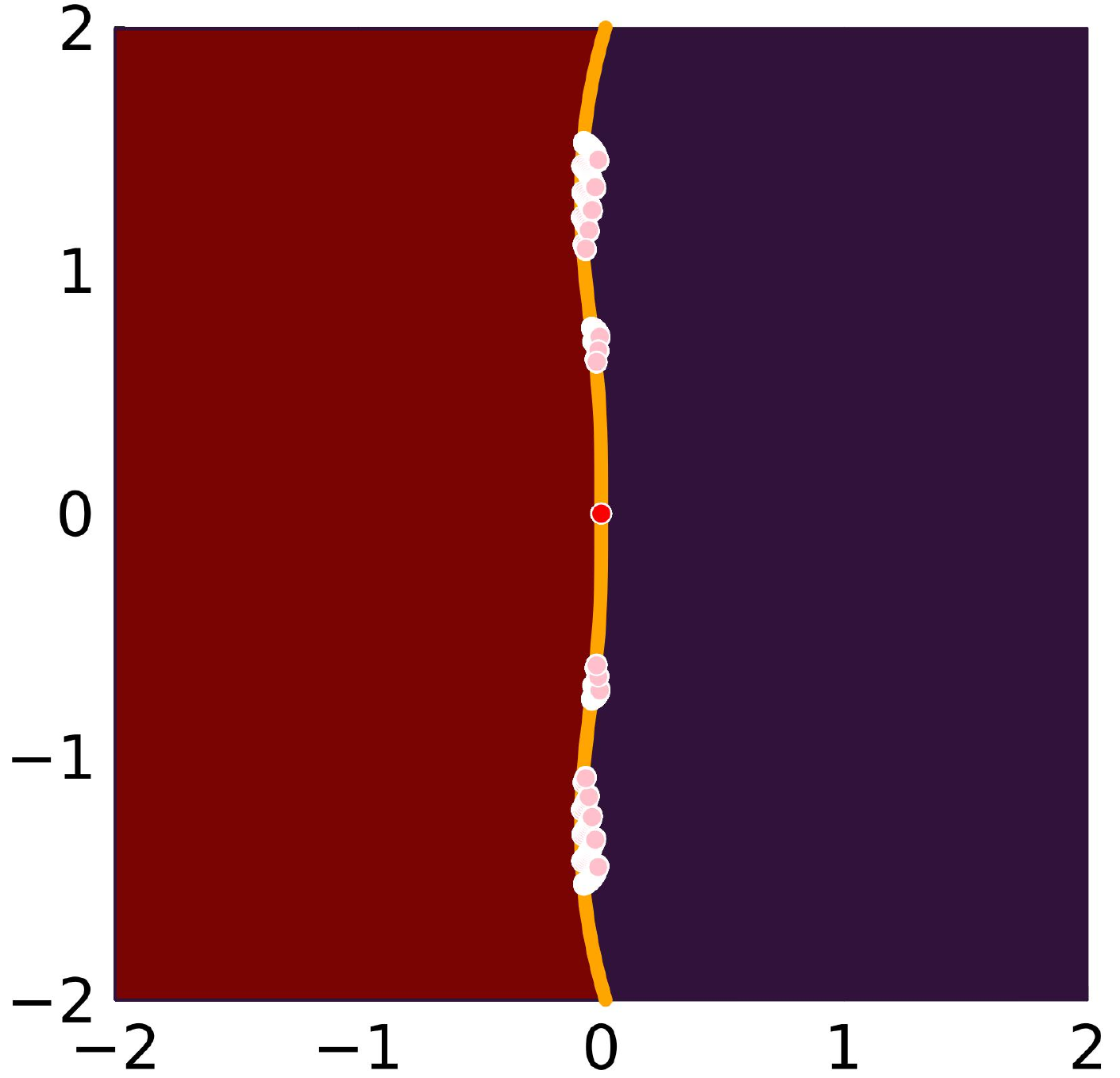} }}%
 \put(-168,67){\rotatebox{90}{\color{black}\small $\operatorname{Im}(z)$}}
 \put(-85,-7){\color{black}\small $\operatorname{Re}(z)$}

 \vspace{-0.5mm}

 \caption{The absolute stability region of BDF3 applying on spectral discretization (the maroon-shaded region along with the orange boundary curve) and the eigenvalues $z$ of $\Delta x(A+xB)$ (the red dots with white boundary correspond to $\operatorname{Re}(z)>0$, and the pink dots with white boundary correspond to $\operatorname{Re}(z)<0$) for $\beta=2$, $x=x_{N},-9,\dots,x_{0}$, $\Delta x=-0.004,-0.002$, and $M=8\times 10^3$.}\label{p:bdf3_z}
 \vspace{-1mm}
\end{figure}

\begin{figure}[!ht]
 \centering
 \subfloat[\vspace*{-6.5mm}\centering Error at $x=-2$]{{\includegraphics[width=0.46\linewidth]{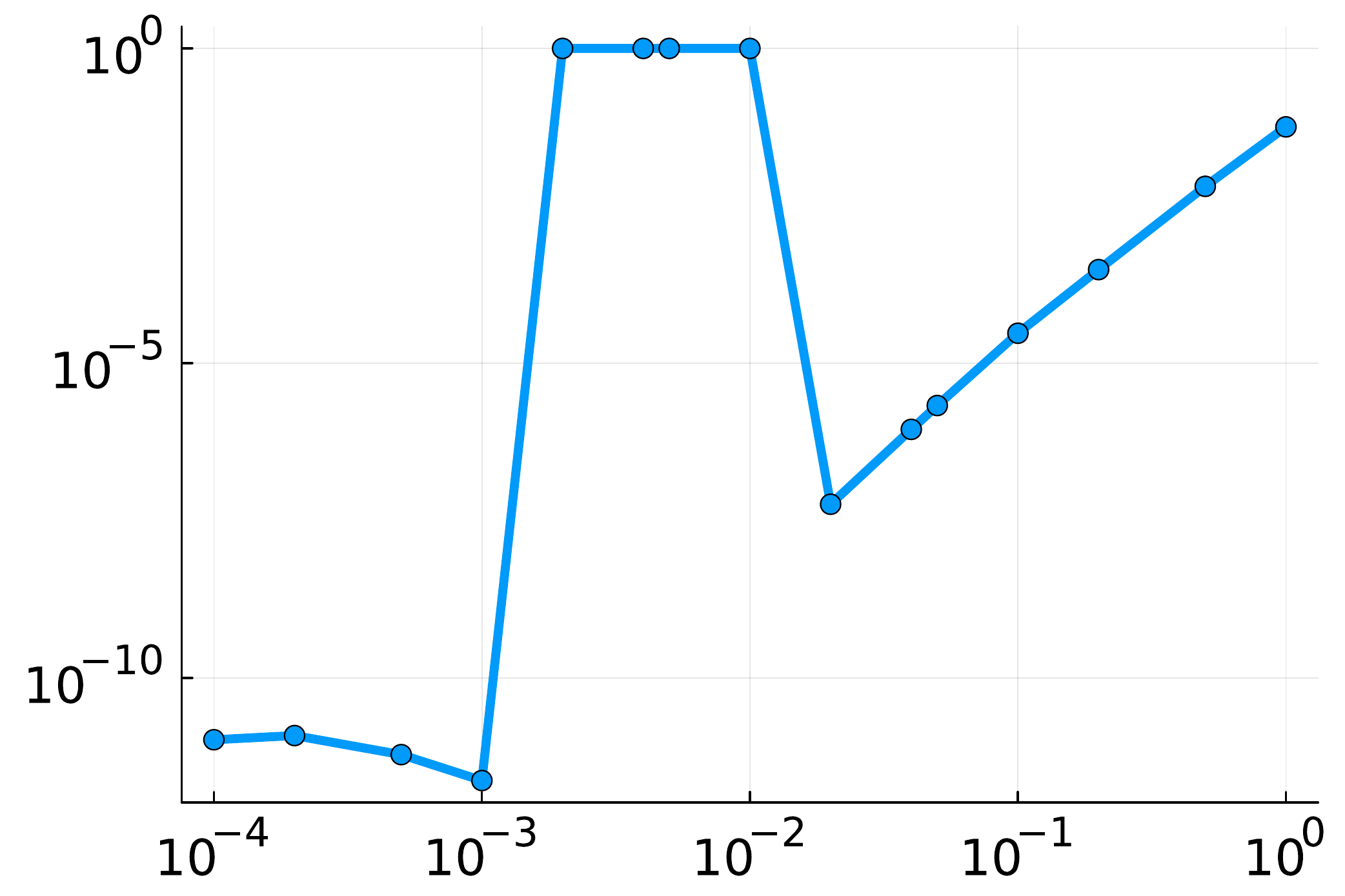} }}%
 \put(-105,-8){\color{black}\small $|\Delta x|$}
 \put(-225,55){\rotatebox{90}{\large Error}}
 \qquad\qquad
 \subfloat[\vspace*{-6.5mm}\centering Mean $\langle \delta\left(x,\Delta x\right)\rangle$]{{\includegraphics[width=0.46\linewidth]{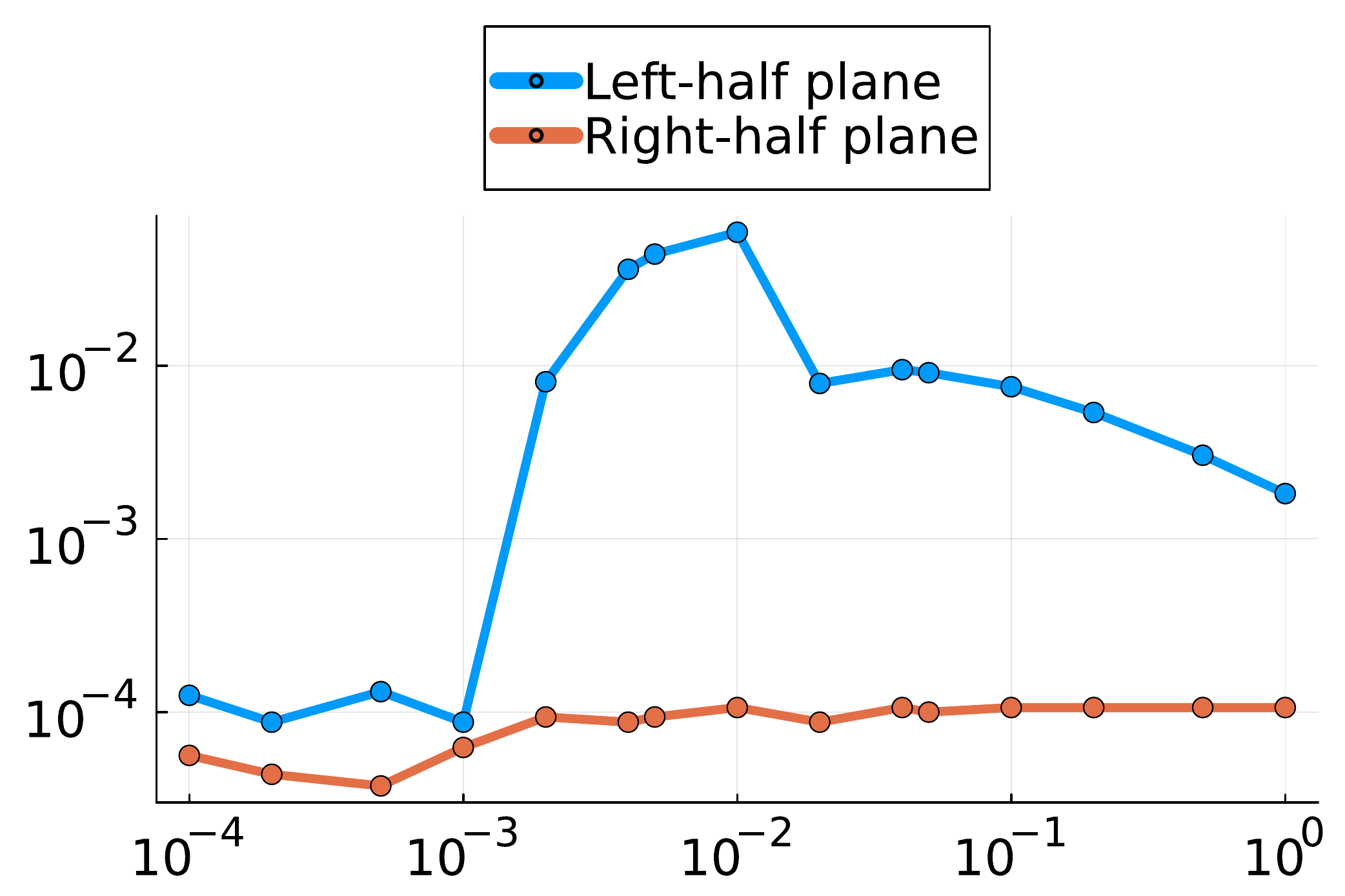} }}%
 \put(-107,-8){\color{black}\small $|\Delta x|$}
 \put(-225,40){\rotatebox{90}{\small $\langle \delta\left(x,\Delta x\right)\rangle$}}
 \qquad
 \vspace*{5mm}
 \subfloat[\vspace*{-6.5mm}\centering Mean $\langle\mu(x,\Delta x)\rangle$] {{\includegraphics[width=0.46\linewidth]{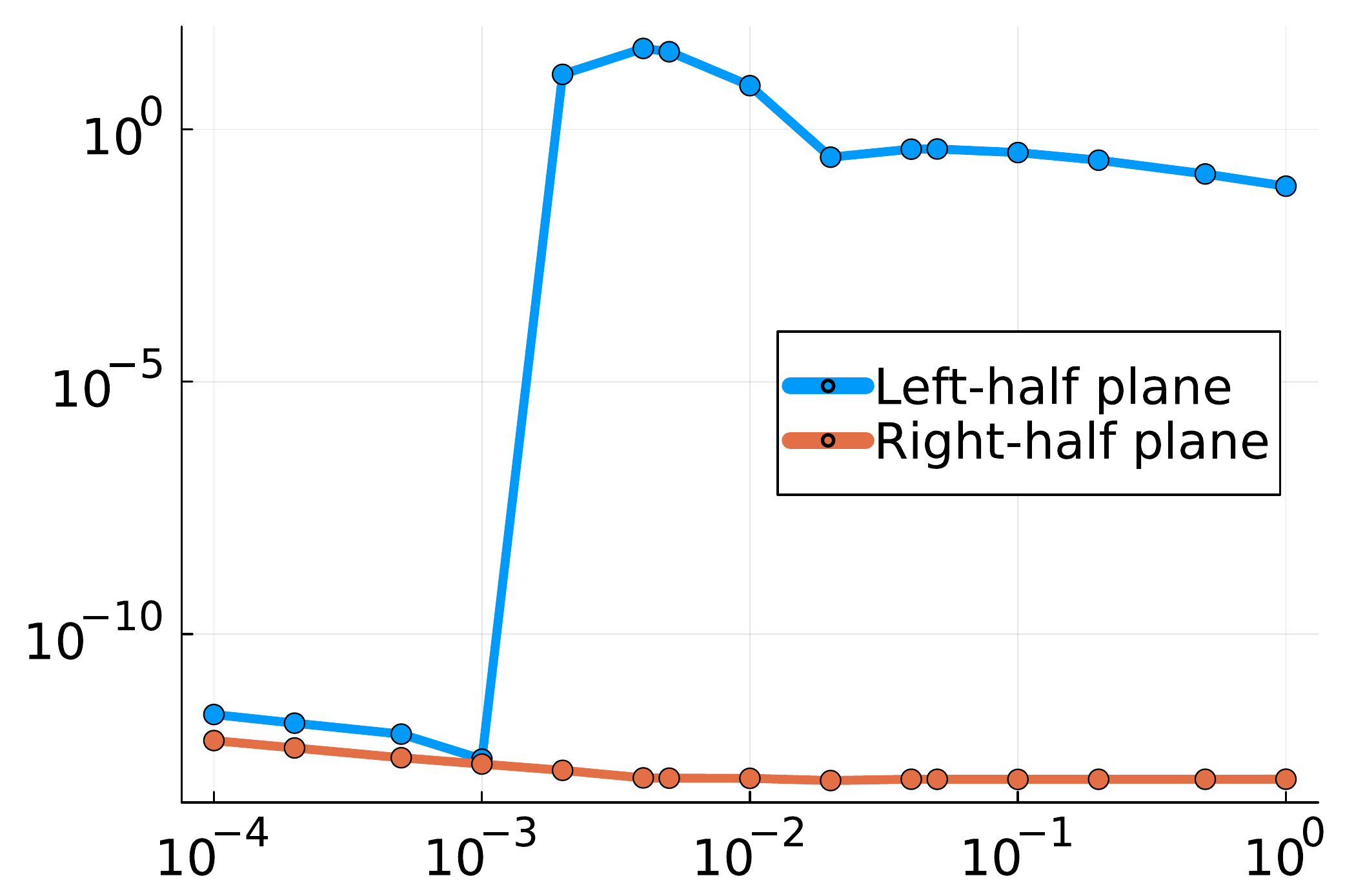} }}%
 \put(-107,-8){\color{black}\small $|\Delta x|$}
 \put(-225,44){\rotatebox{90}{\small $\langle\mu(x,\Delta x)\rangle$}}
 \caption{The non-monotonic convergence for BDF4 along with the mean fraction $\langle \delta\left(x,\Delta x\right)\rangle$ of the unstable eigenvalues and the mean value $\langle \mu(x,\Delta x)\rangle$. The detailed computing process for each plot is the same as for BDF3, see the caption of Figure~\ref{p:bdf3_non}.}%
\end{figure}

\begin{figure}[!ht]\vspace{-3mm}
 \centering
 \subfloat[\vspace*{-5.5mm}\centering Zoomed-out view] {{\includegraphics[width=0.37\linewidth]{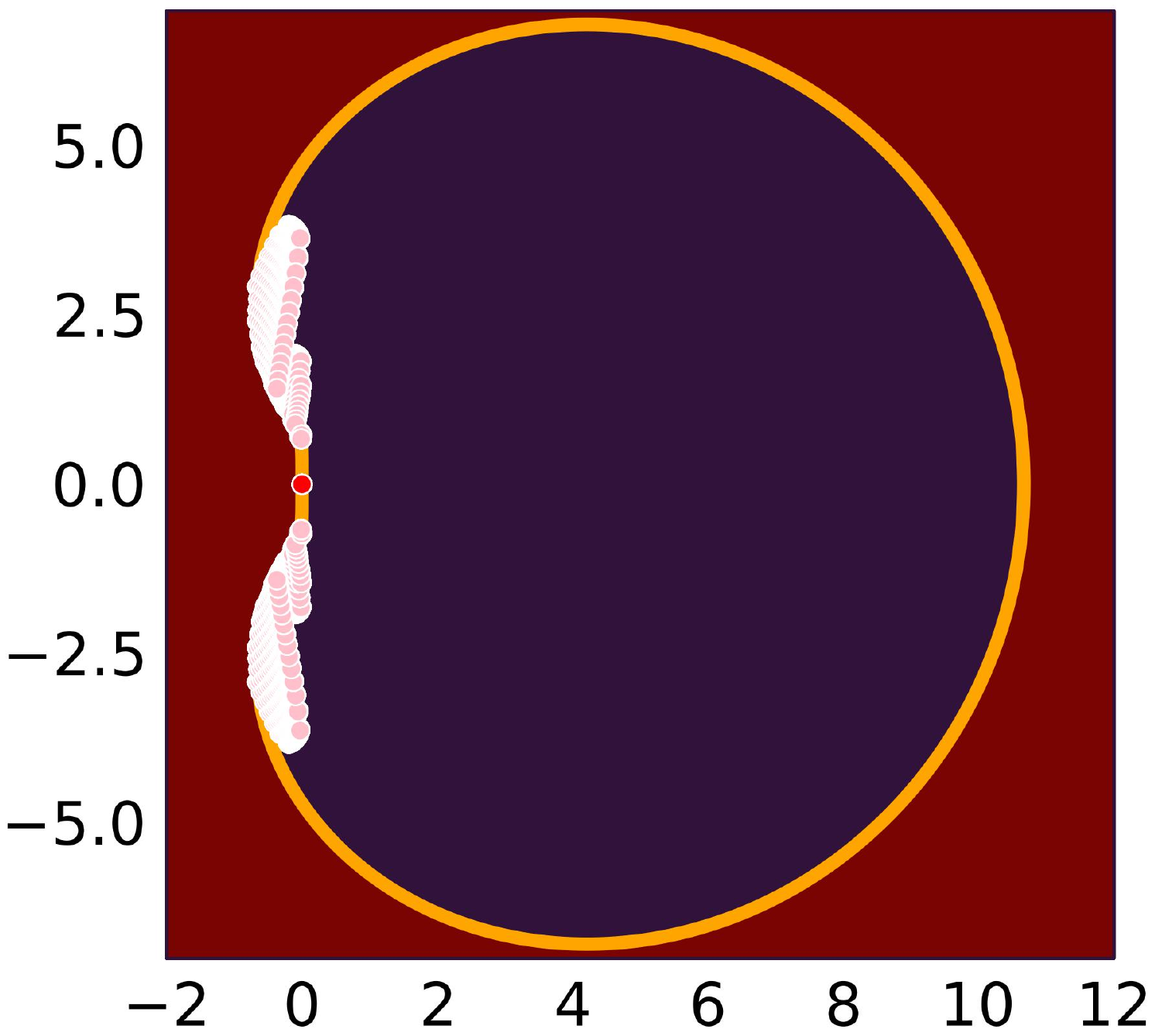} }}%
 \put(-183,67){\rotatebox{90}{\color{black}\small $\operatorname{Im}(z)$}}
 \put(-90,-7){\color{black}\small $\operatorname{Re}(z)$}
 \qquad\qquad
 \vspace*{5mm}
 \subfloat[\vspace*{-5.5mm}\centering Zoomed-in view]{{\includegraphics[width=0.35\linewidth]{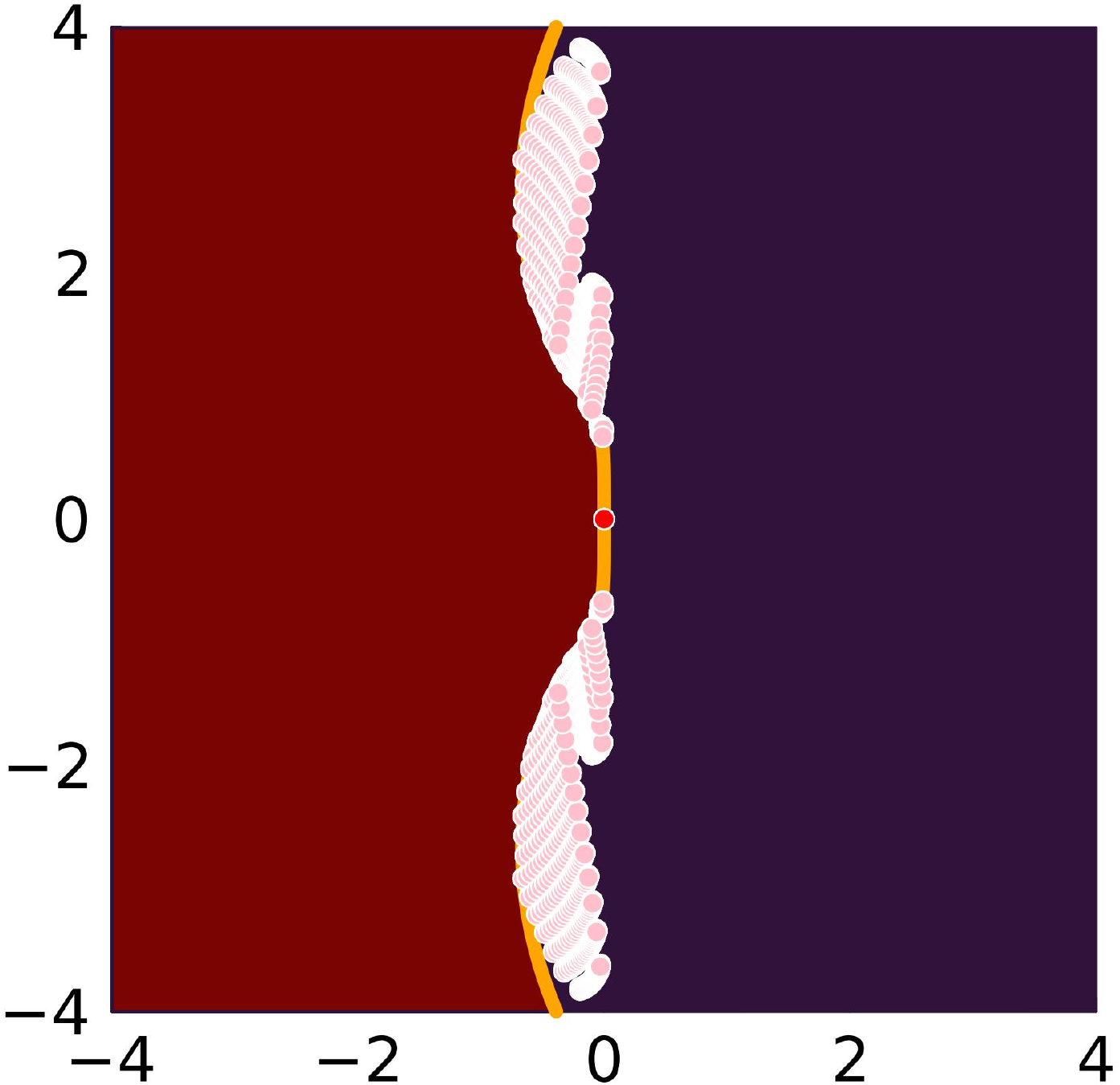} }}%
 \put(-178,67){\rotatebox{90}{\color{black}\small $\operatorname{Im}(z)$}}
 \put(-90,-7){\color{black}\small $\operatorname{Re}(z)$}
 \caption{The absolute stability region of BDF4 (the maroon-shaded region along with the orange boundary curve) and the eigenvalues $z$ of $\Delta x(A+xB)$ (the red dots with white boundary correspond to $\operatorname{Re}(z)>0$, and the pink dots with white boundary correspond to $\operatorname{Re}(z)<0$) for $\beta=2$, $x=x_{N},-9,\dots,x_{0}$, $\Delta x=-0.01,-0.005,-0.004,-0.002$, and $M=8\times 10^3$.}%
\end{figure}
\begin{figure}[!ht]
 \centering
 \subfloat[\vspace*{-6.5mm}\centering Error at $x=-2$]{{\includegraphics[width=0.46\linewidth]{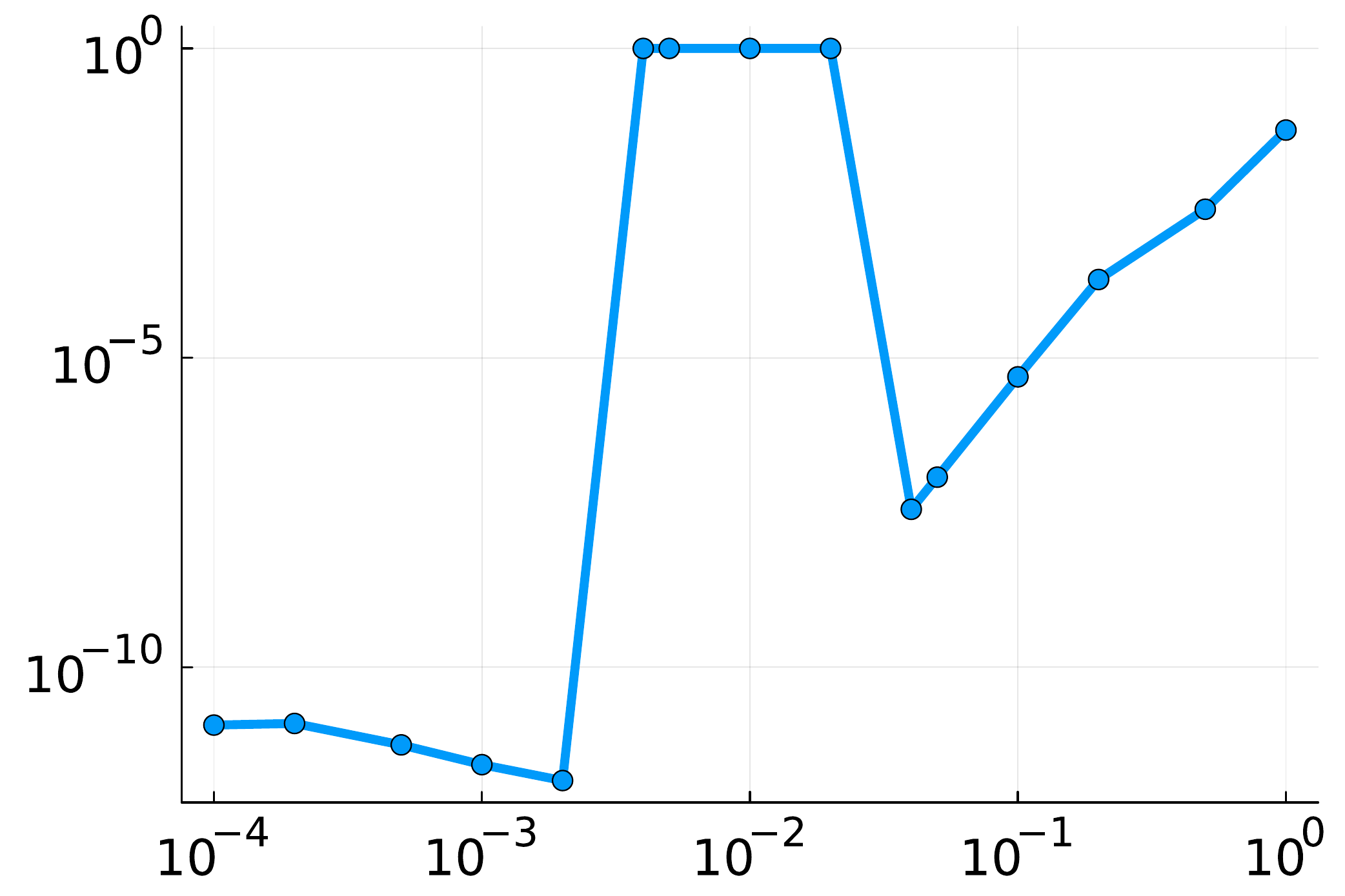} }}%
 \put(-105,-8){\color{black}\small $|\Delta x|$}
 \put(-225,55){\rotatebox{90}{\large Error}}
 \qquad\qquad
 \subfloat[\vspace*{-6.5mm}\centering Mean $\langle \delta\left(x,\Delta x\right)\rangle$]{{\includegraphics[width=0.46\linewidth]{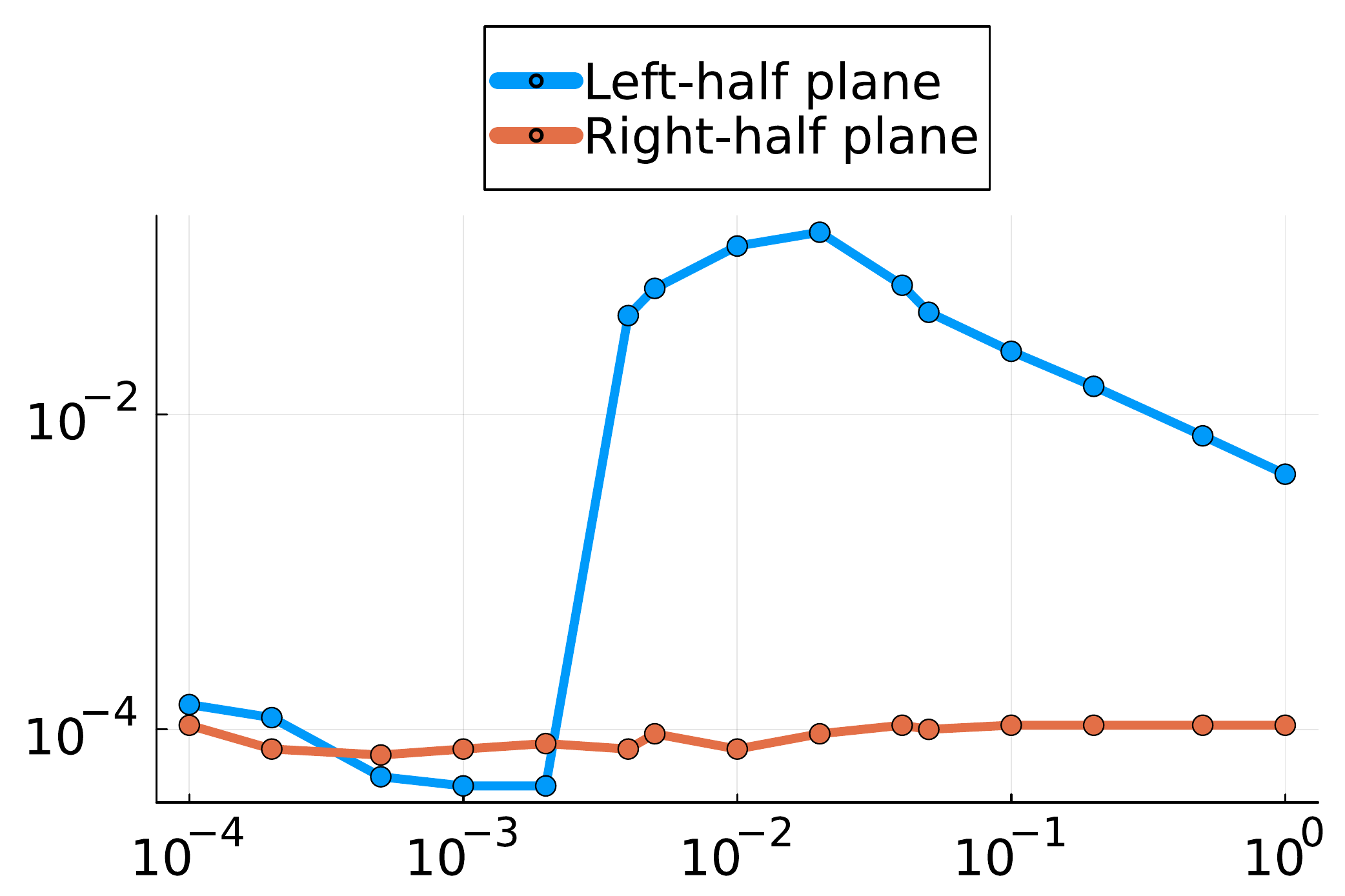} }}%
 \put(-107,-8){\color{black}\small $|\Delta x|$}
 \put(-225,40){\rotatebox{90}{\small $\langle \delta\left(x,\Delta x\right)\rangle$}}
 \qquad
 \vspace*{5mm}
 \subfloat[\vspace*{-6.5mm}\centering Mean $\langle\mu(x,\Delta x)\rangle$] {{\includegraphics[width=0.46\linewidth]{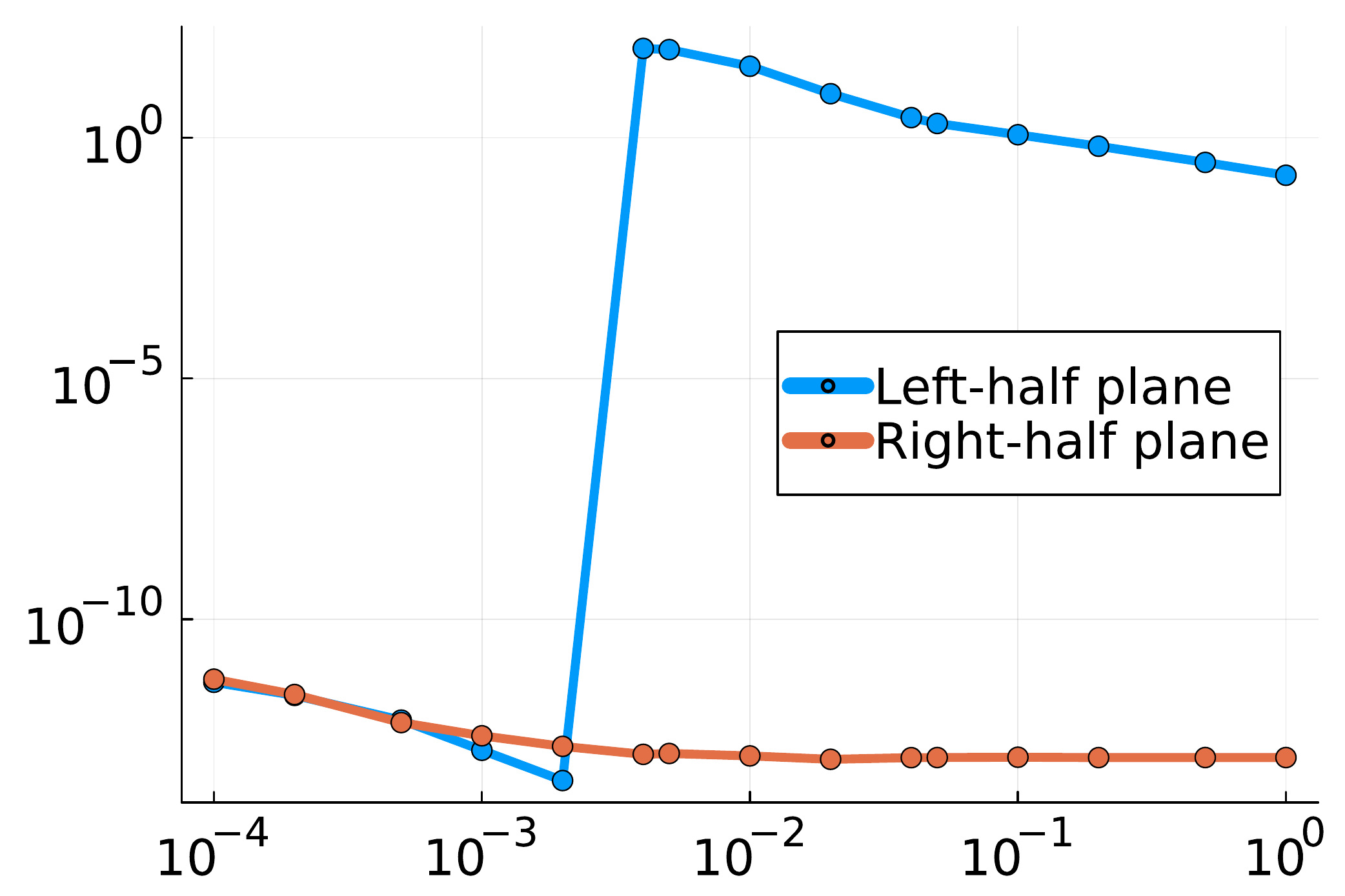} }}%
 \put(-107,-8){\color{black}\small $|\Delta x|$}
 \put(-225,43){\rotatebox{90}{\small $\langle\mu(x,\Delta x)\rangle$}}
 \caption{The non-monotonic convergence for BDF5 along with the mean fraction $\langle \delta\left(x,\Delta x\right)\rangle$ of the unstable eigenvalues and the mean value $\langle \mu(x,\Delta x)\rangle$. The detailed computing process for each plot is the same as for BDF3, see the caption of Figure~\ref{p:bdf3_non}.}%
\end{figure}

\begin{figure}[!ht]
 \centering
 \subfloat[\vspace*{-5.5mm}\centering Zoomed-out view] {{\includegraphics[width=0.335\linewidth]{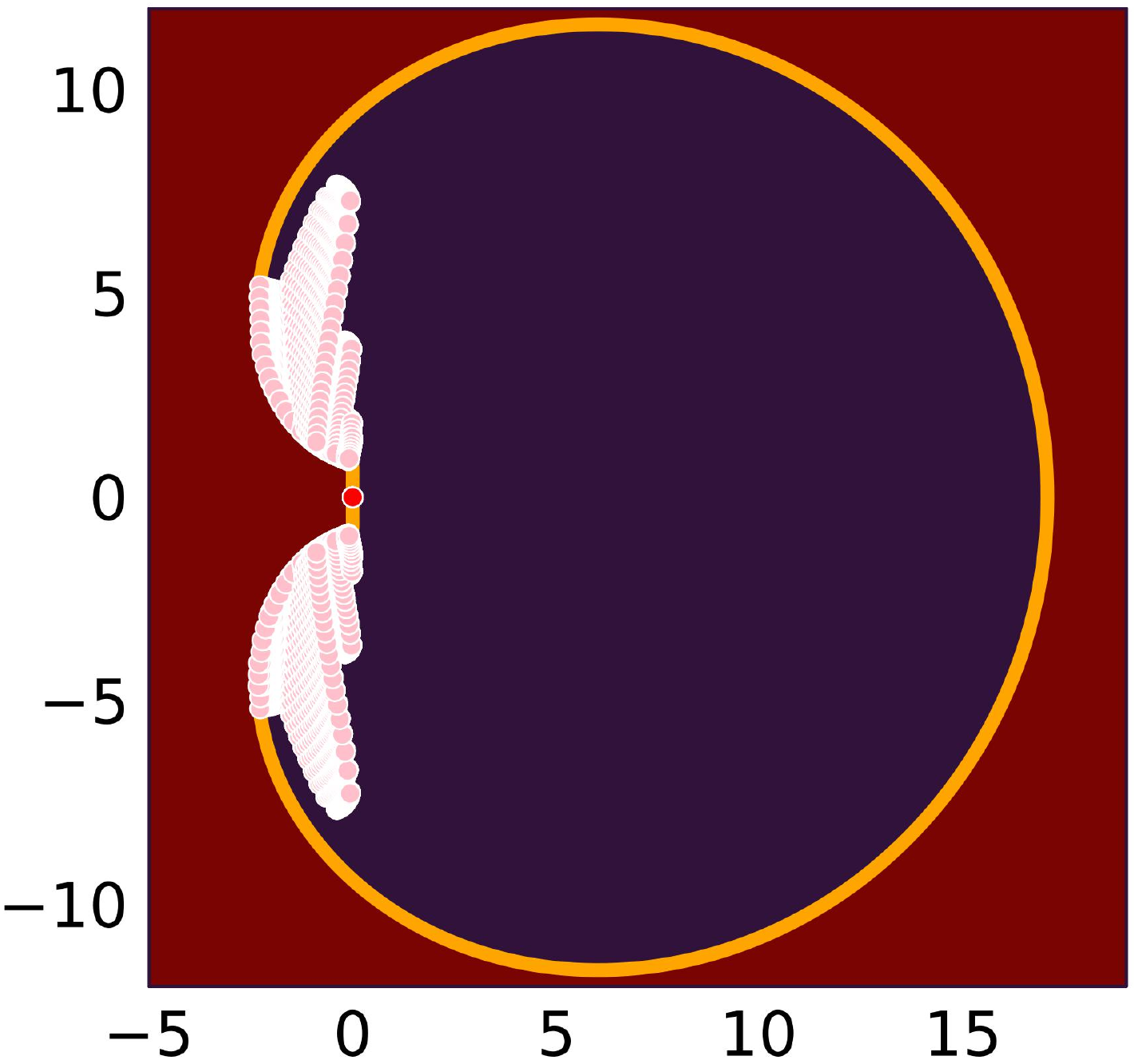} }}%
 \put(-170,67){\rotatebox{90}{\color{black}\small $\operatorname{Im}(z)$}}
 \put(-80,-7){\color{black}\small $\operatorname{Re}(z)$}
 \qquad\qquad
 \vspace*{5mm}
 \subfloat[\vspace*{-5.5mm}\centering Zoomed-in view]{{\includegraphics[width=0.33\linewidth]{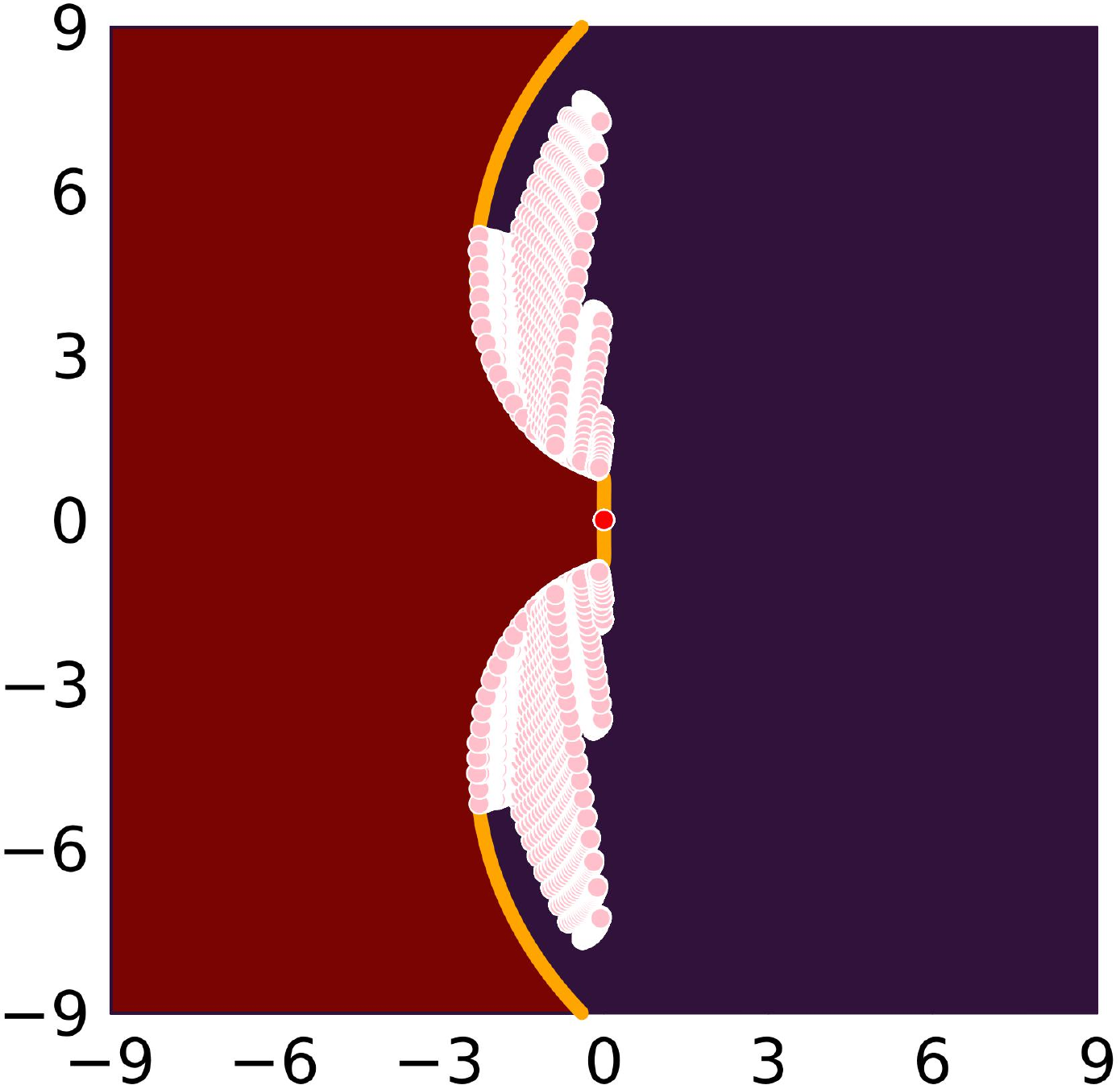} }}%
 \put(-170,67){\rotatebox{90}{\color{black}\small $\operatorname{Im}(z)$}}
 \put(-80,-7){\color{black}\small $\operatorname{Re}(z)$}
 \caption{The absolute stability region of BDF5 (the maroon-shaded region along with the orange boundary curve) and the eigenvalues $z$ of $\Delta x(A+xB)$ (the red dots with white boundary correspond to $\operatorname{Re}(z)>0$, and the pink dots with white boundary correspond to $\operatorname{Re}(z)<0$) for $\beta=2$, $x=x_{N},-9,\dots,x_{0}$, $\Delta x=-0.02,-0.01,-0.005,-0.004$, and $M=8\times 10^3$.}%
\end{figure}
\begin{figure}[!ht]
 \centering
 \subfloat[\vspace*{-6.5mm}\centering Error at $x=-2$]{{\includegraphics[width=0.46\linewidth]{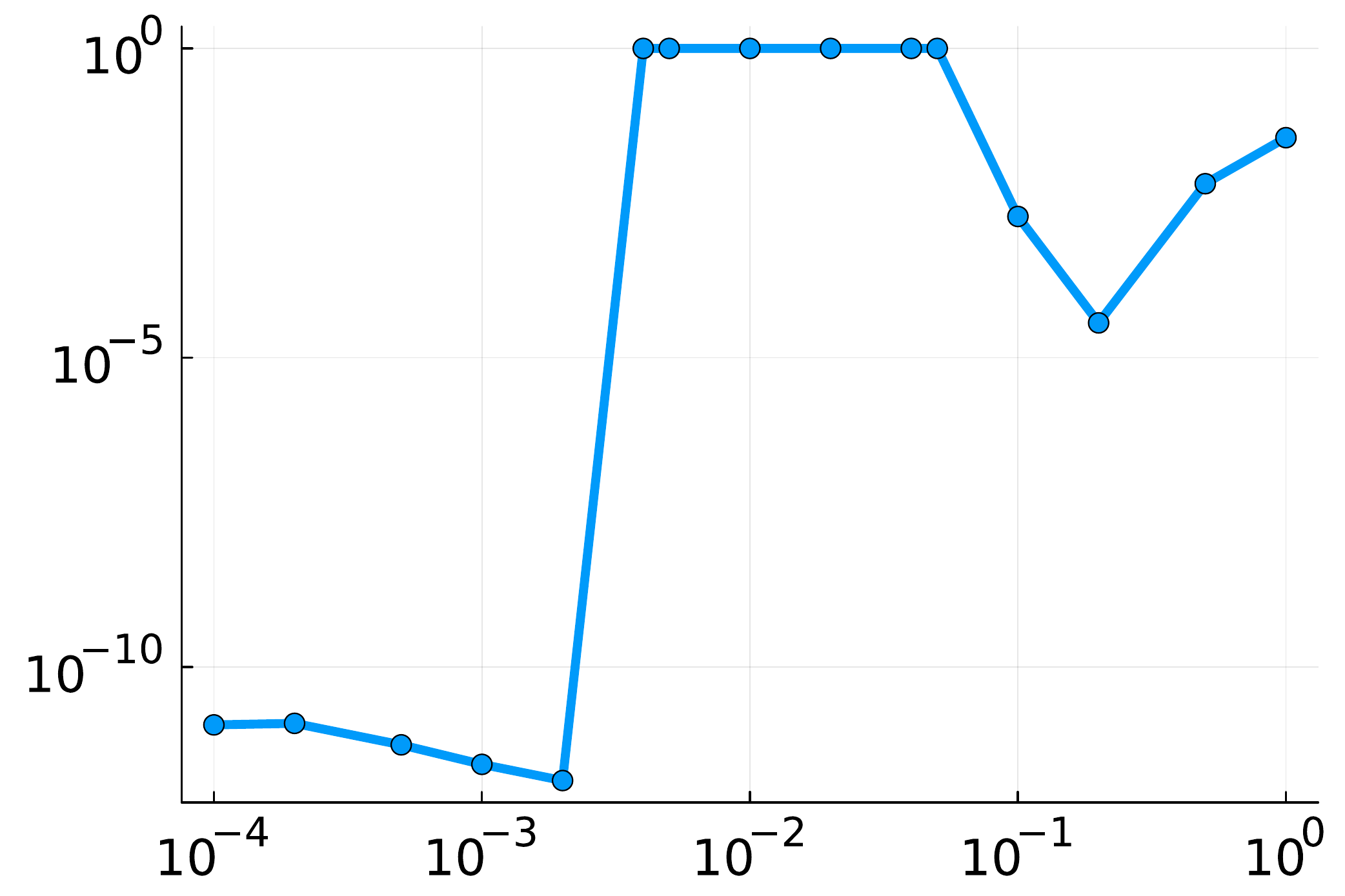} }}%
 \put(-105,-8){\color{black}\small $|\Delta x|$}
 \put(-225,55){\rotatebox{90}{\large Error}}
 \qquad\qquad
 \subfloat[\vspace*{-6.5mm}\centering Mean $\langle \delta\left(x,\Delta x\right)\rangle$]{{\includegraphics[width=0.46\linewidth]{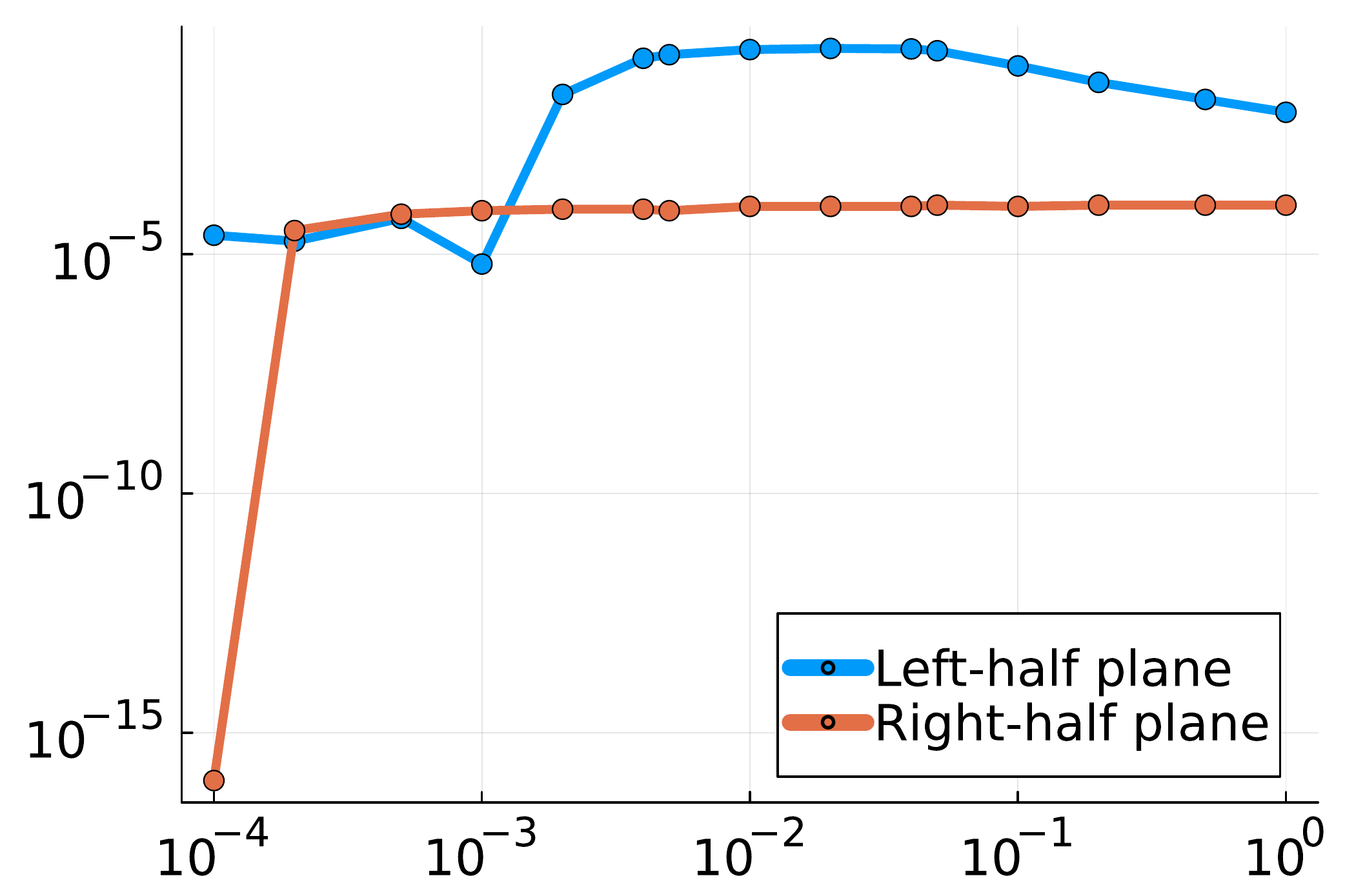} }}%
 \put(-107,-8){\color{black}\small $|\Delta x|$}
 \put(-225,40){\rotatebox{90}{\small $\langle \delta\left(x,\Delta x\right)\rangle$}}
 \qquad
 \vspace*{5mm}
 \subfloat[\vspace*{-6.5mm}\centering Mean $\langle\mu(x,\Delta x)\rangle$] {{\includegraphics[width=0.46\linewidth]{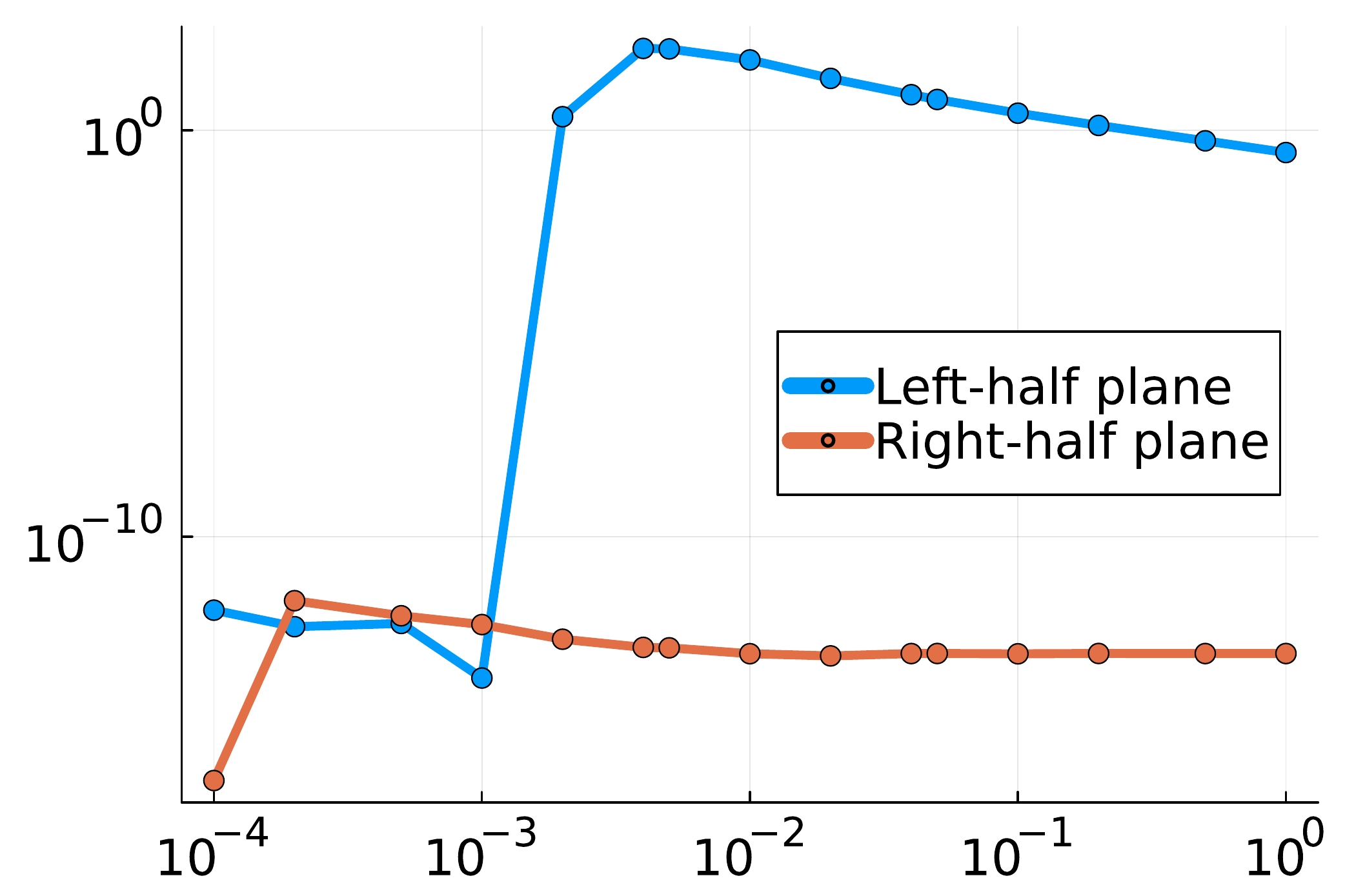} }}%
 \put(-107,-8){\color{black}\small $|\Delta x|$}
 \put(-225,43){\rotatebox{90}{\small $\langle\mu(x,\Delta x)\rangle$}}
 \caption{The non-monotonic convergence for BDF6 along with the mean fraction $\langle \delta\left(x,\Delta x\right)\rangle$ of the unstable eigenvalues and the mean value $\langle \mu(x,\Delta x)\rangle$. The detailed computing process for each plot is the same as for BDF3, see the caption of Figure~\ref{p:bdf3_non}.}%
\end{figure}
\begin{figure}[!ht]\vspace{-2mm}
 \centering
 \subfloat[\vspace*{-5.5mm}\centering Zoomed-out view] {{\includegraphics[width=0.36\linewidth]{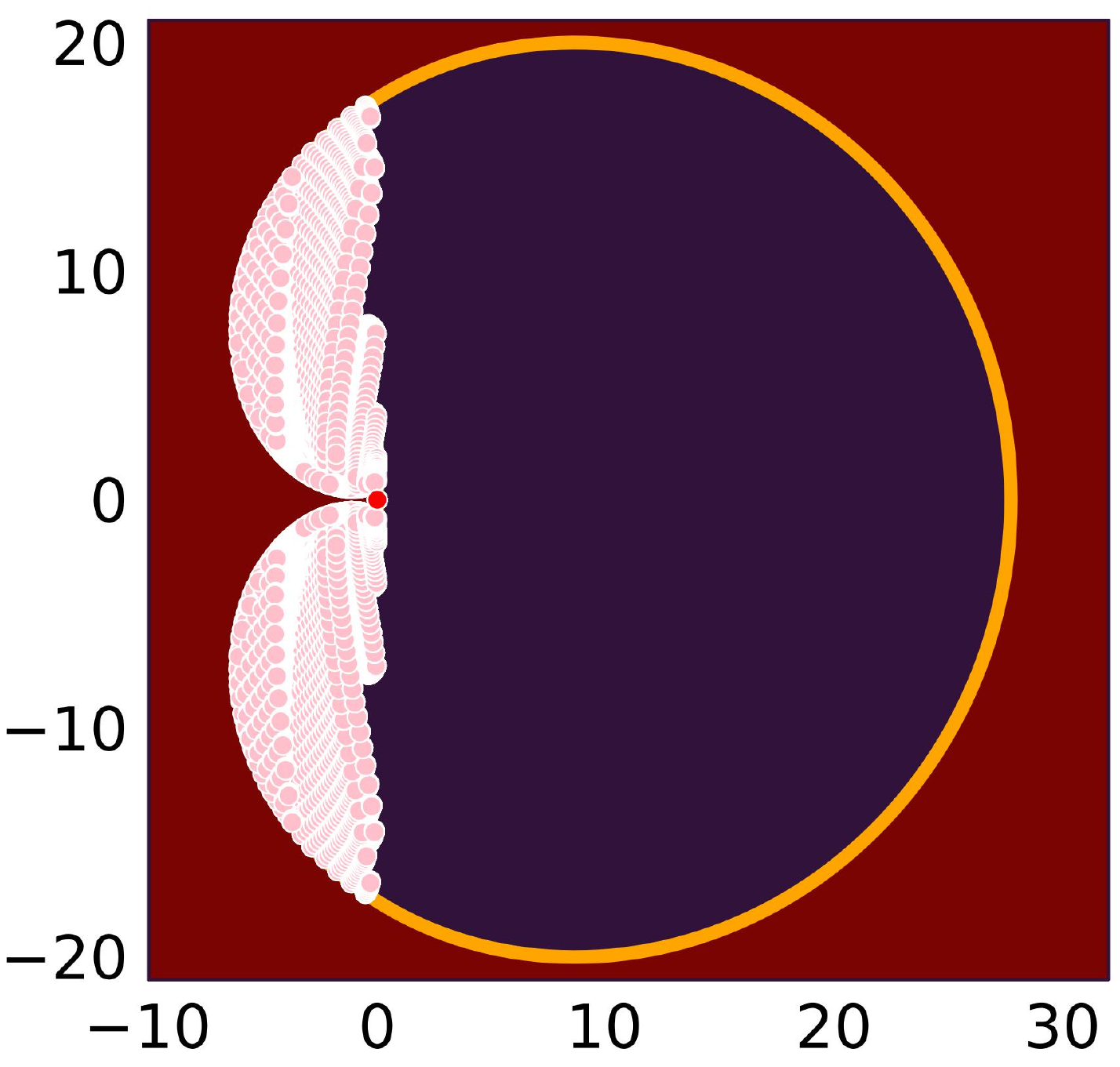} }}%
 \put(-183,67){\rotatebox{90}{\color{black}\small $\operatorname{Im}(z)$}}
 \put(-88,-7){\color{black}\small $\operatorname{Re}(z)$}
 \qquad\qquad
 \vspace*{5mm}
 \subfloat[\vspace*{-5.5mm}\centering Zoomed-in view]{{\includegraphics[width=0.364\linewidth]{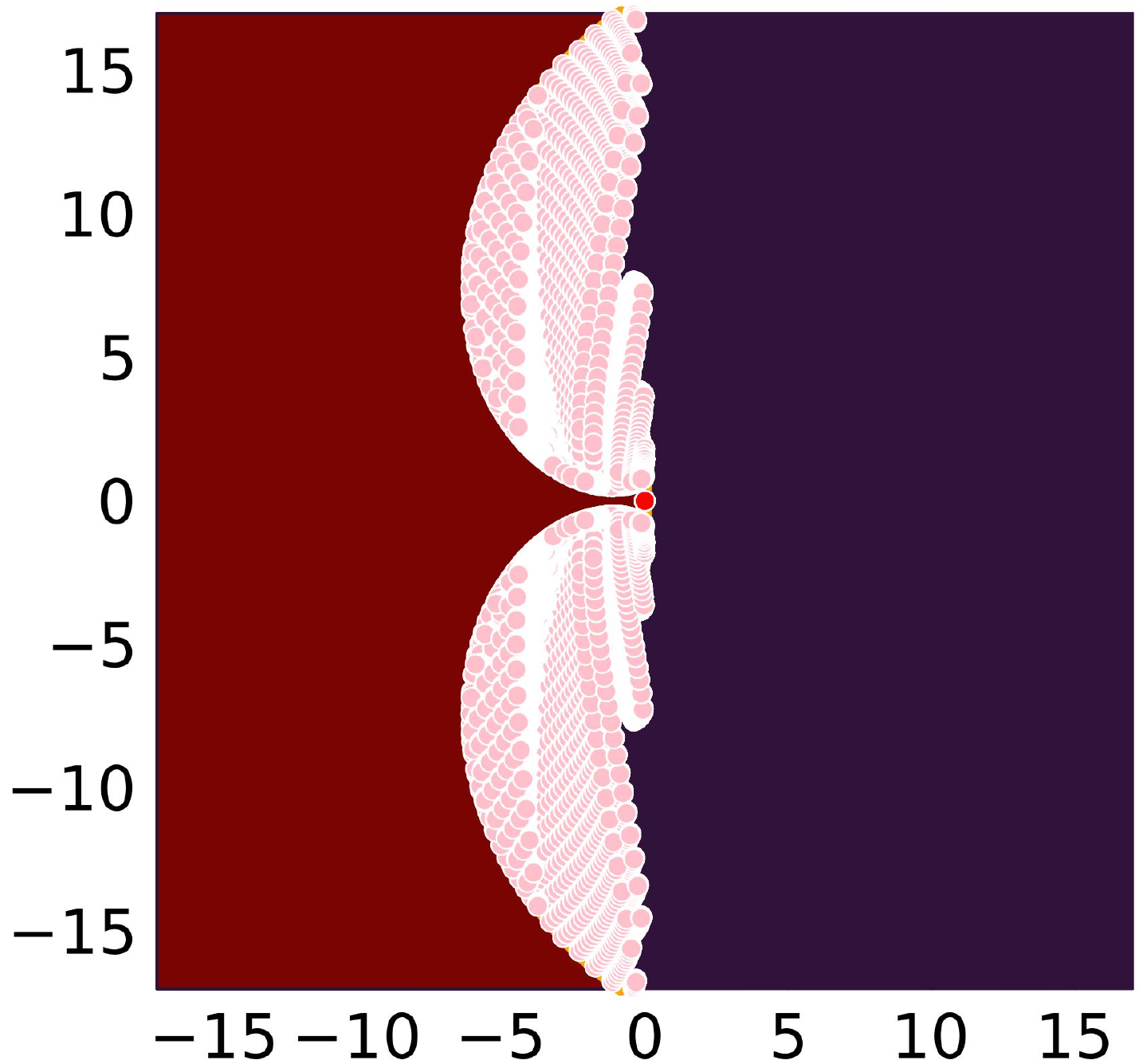} }}%
 \put(-183,67){\rotatebox{90}{\color{black}\small $\operatorname{Im}(z)$}}
 \put(-88,-7){\color{black}\small $\operatorname{Re}(z)$}

 \vspace{-1mm}

 \caption{The absolute stability region of BDF6 (the maroon-shaded region along with the orange boundary curve) and the eigenvalues $z$ of $\Delta x(A+xB)$ (the red dots with white boundary correspond to $\operatorname{Re}(z)>0$, and the pink dots with white boundary correspond to $\operatorname{Re}(z)<0$) for $\beta=2$, ${x=x_{N},-9,\dots,x_{0}}$, $\Delta x=-0.1,-0.05,-0.04,-0.02,-0.01,-0.005,-0.004$, and $M=8\times 10^3$.}%
 \label{p:bdf6_z}%
\end{figure}

When applying BDF methods to the spectral discretization, we find that each BDF method has a corresponding range for $\Delta x$ that causes instability. Moreover, once the value of $\Delta x$ exits this range, convergence appears to occur at the expected rate. As a result, when an error plot is generated with respect to the value of $\left|\Delta x\right|$, we will observe a non-monotonic pattern of convergence. Applying a $r$-step linear method with the form~\cite[Section~7.3]{10.5555/1355322}
\[
 \sum_{j=0}^{r}\alpha_{j}U^{n+j}=k\sum_{j=0}^{r}\beta_{j}f\big(U^{n+j},t_{n+j}\big)
\]
to $u'=\lambda u$ gives
\[
 \sum_{j=0}^{r}\alpha_{j}U^{n+j}=k\sum_{j=0}^{r}\beta_{j}\lambda U^{n+j}\ \Longrightarrow \  \sum_{j=0}^{r}(\alpha_{j}-z\beta_{j})U^{n+j}=0,
\]
where $z\equiv k\lambda$. We call $\sum_{j=0}^{r}(\alpha_{j}-z\beta_{j})\xi^{j}$ the stability polynomial of this method and denote it by $\pi\left(\xi;z\right)$~\cite[Section 7.3]{10.5555/1355322}. The absolute stability region of a general $r$-step method consists of the values of $z$ such that the roots $\xi_{i}(z)$, $i=1,2,\dots,r$ of $\pi(\xi;z)$ satisfy the following two conditions:
\begin{enumerate}\itemsep=0pt
 \item[(1)] $ |\xi_{i} |\leq 1$ for $i=1,2,\dots,r$,

 \item[(2)] If $\xi_{i}$ is a repeated root, then $|\xi_{i}|<1$.
\end{enumerate}
See, for example,~\cite[Definition~6.2]{10.5555/1355322} for the definition. To analyze the roots of $\pi(\xi;z)$ for each BDF method as a function of $x$ and $\Delta x$, suppose the eigenvalues of $(A+xB)$ are given by $\lambda_{1},\lambda_{2},\dots,\lambda_{2M+1}$ and define
\begin{gather*}
 \mu_{l}(x,\Delta x):=\max_j\left|\frac {\max_{i}|\xi_{i}(\Delta x\lambda_{j})|-1}{\Delta x}\right| \qquad \text{for} \ \operatorname{Re}(\Delta x \lambda_{j})<0,
\\
 \mu_{r}(x,\Delta x):=\max_j\left|\frac {\max_{i}|\xi_{i}(\Delta x\lambda_{j})|-1}{\Delta x}\right|\qquad \text{for} \ \operatorname{Re}(\Delta x \lambda_{j})>0.
\end{gather*}
Similarly, define
\begin{gather*}
 \delta_{l}(x,\Delta x):=\frac {1}{2M+1}\#\big\{j\colon\max_{i}|\xi_{i}(\Delta x \lambda_{j})|>1,\,\operatorname{Re}(\Delta x \lambda_{j})<0\big\},
\\
 \delta_{r}(x,\Delta x):=\frac {1}{2M+1}\#\big\{j\colon \max_{i}|\xi_{i}(\Delta x \lambda_{j})|>1,\,\operatorname{Re}(\Delta x \lambda_{j})>0\big\},
\end{gather*}
where $A+xB$ is the coefficient matrix of the spectral discretization and $\#$ gives the cardinality of the set. The function $\mu_{l}(x,\Delta x)\left(\mu_{r}(x,\Delta x)\right)$ helps capture the maximum growth rate instabilities in the numerical method caused by the eigenvalues of $\Delta x(A+xB)$ in the left(right)-half plane and $\delta_{l}(x,\Delta x)\left(\delta_{r}(x,\Delta x)\right)$ gives the fraction of eigenvalues of $\Delta x(A+xB)$ in the left(right)-half plane that give rise to a positive growth rate.

We then compute the mean value $\langle \mu_{l}(x,\Delta x)\rangle$, $\langle \mu_{r}(x,\Delta x)\rangle$, $\langle \delta_{l}(x,\Delta x)\rangle$, and $\langle \delta_{r}(x,\Delta x)\rangle$ respectively over $x=-10,-9,\dots,\lfloor 13/\sqrt{\beta}\rfloor$:
\begin{gather}
 \langle \mu_{l} (x,\Delta x )\rangle:=\frac {1}{10+\lfloor13/\sqrt{\beta}\rfloor}\sum_{x=-10}^{\lfloor13/\sqrt{\beta}\rfloor}\mu_{l} (x,\Delta x ),
 \label{eqn:mul}
\\
 \langle \mu_{r} (x,\Delta x )\rangle:=\frac {1}{10+\lfloor13/\sqrt{\beta}\rfloor}\sum_{x=-10}^{\lfloor13/\sqrt{\beta}\rfloor}\mu_{r} (x,\Delta x ),
 \label{eqn:mur}
\\
 \langle \delta_{l} (x,\Delta x )\rangle:=\frac {1}{10+\lfloor13/\sqrt{\beta}\rfloor}\sum_{x=-10}^{\lfloor13/\sqrt{\beta}\rfloor}\delta_{l} (x,\Delta x ),
 \label{eqn:deltal}
\\
\langle \delta_{r}\left(x,\Delta x\right)\rangle:=\frac {1}{10+\lfloor13/\sqrt{\beta}\rfloor}\sum_{x=-10}^{\lfloor13/\sqrt{\beta}\rfloor}\delta_{r} (x,\Delta x).
 \label{eqn:deltar}
\end{gather}
Figures~\ref{p:bdf3_non}--\ref{p:bdf6_z} show the non-monotonic convergence for each BDF method along with the mean fraction $\langle r\left(x,\Delta x\right)\rangle$ of the unstable eigenvalues of $\Delta x(A+xB)$, the value of $\langle \mu(x,\Delta x)\rangle$, and the eigenvalues of $\Delta x(A+xB)$ within the unstable range for $\Delta x$. On one hand, it is not the unstable eigenvalues in the right-half plane causing the non-monotonic convergence. On the other hand, the range for $\Delta x$ that causes instability coincides with the range for $\Delta x$ whose values of $\langle\mu(x,\Delta x)\rangle$ in the left-half plane are larger than $\exp(1)$ and have the largest magnitudes. Moreover, it also coincides with the range for $\Delta x$ that contains the largest mean fraction $\langle r(x,\Delta x)\rangle$ of the unstable eigenvalues from the left-half plane. Note that for BDF6, though the error at $x=-0.1$ does not blow up, it still suggests that $x=-0.1$ causes instability. For BDF3, the range for $\Delta x$ that causes instability is approximately $[-0.004,-0.002]$. For BDF4, the range is approximately $[-0.01,-0.002]$. For BDF5, the range is approximately $[-0.02,-0.004]$. For BDF6, the range is approximately $[-0.1,-0.004]$.

\section[The case when beta=infty]{The case when $\boldsymbol{\beta=\infty}$}\label{a:1}

We consider the limiting behavior of $F_{\beta}$ as $\beta\to \infty$. If we let $\beta=\infty$, the original boundary value problem becomes
\begin{equation}
\frac {\partial F}{\partial x}+\big(x-\omega^2\big)\frac {\partial F}{\partial \omega}=0\qquad \text{for}\ (x,\omega)\in \mathbb {R}^{2},
\label{eqn:inf}
\end{equation}
with the following boundary conditions:
\begin{gather}
F(x,\omega)\to 1\qquad \text{as} \ x,\omega\to \infty \ \text{together,}\label{bcs1}\\
F(x,\omega)\to 0\qquad \text{as} \ \omega\to -\infty \ \text{with}\ x \ \text{bounded above.}\label{bcs2}
\end{gather}
Using the method of characteristics~\cite{Kevorkian1990}, rewrite~\eqref{eqn:inf} as
\[
 \frac {{\rm d} F}{{\rm d} x}=0,
\]
where
\[
 \frac {{\rm d} F}{{\rm d} x}=\frac {\partial F}{\partial x}+\frac {{\rm d} \omega}{{\rm d} x}\frac {\partial F}{\partial \omega}
\]
along curves for which
\begin{equation}
 \frac {{\rm d} \omega}{{\rm d} x}=x-\omega^2.
 \label{eqn:air}
\end{equation}
Substituting $\omega=u'/u$ in~\eqref{eqn:air} yields the Airy equation
\[
 u''-xu=0,
\]
whose solution is given by
\[
u=c_{1}\operatorname{Ai}(x)+c_{2}\operatorname{Bi}(x),
\]
a linear combination of $\operatorname{Ai}(x)$, the Airy function of the first kind, and $\operatorname{Bi}(x)$, the Airy function of the second kind~\cite[Chapter~9]{NIST:DLMF}. Thus, the solution of~\eqref{eqn:air} is given by
\begin{equation}
 \omega=\frac {u'}{u}=\frac {c_{1}\operatorname{Ai}'(x)+c_{2}\operatorname{Bi}'(x)}{c_{1}\operatorname{Ai}(x)+c_{2}\operatorname{Bi}(x)},
 \label{eqn:characteris}
\end{equation}
which are the characteristic curves of $F(x,\omega)$.
\begin{Claim}
 There exists a unique characteristic curve for $F(x,\omega)$ along which~$\omega\to -\infty$ as ${x\to \infty}$.
\end{Claim}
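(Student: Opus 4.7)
\emph{Proof plan.}
My plan is to analyze the one-parameter family of characteristic curves~\eqref{eqn:characteris} directly via the large-$x$ asymptotics of the Airy functions. Since multiplying $(c_1,c_2)$ by a nonzero constant leaves $\omega$ unchanged, the characteristics are naturally parametrized by $[c_1:c_2]\in\mathbb{RP}^1$, and the argument splits cleanly according to whether $c_2=0$ or $c_2\neq 0$. The essential input is that, as $x\to\infty$, $\operatorname{Bi}(x)$ and $\operatorname{Bi}'(x)$ grow like $e^{\frac{2}{3}x^{3/2}}$ while $\operatorname{Ai}(x)$ and $\operatorname{Ai}'(x)$ decay like $e^{-\frac{2}{3}x^{3/2}}$, so the $\operatorname{Bi}$ contribution dominates numerator and denominator in~\eqref{eqn:characteris} by a factor $e^{\frac{4}{3}x^{3/2}}$ whenever it is present.

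First I would record the standard asymptotics from~\cite[Chapter~9]{NIST:DLMF}, in particular the ratios
\[
\frac{\operatorname{Ai}'(x)}{\operatorname{Ai}(x)}\sim -\sqrt{x},\qquad \frac{\operatorname{Bi}'(x)}{\operatorname{Bi}(x)}\sim +\sqrt{x}\quad\text{as}\ x\to\infty.
\]
Then I would treat the two cases. If $c_2\neq 0$, I may normalize to $c_2=1$, and the exponential dominance of $\operatorname{Bi}$ gives
\[
\omega(x)=\frac{c_1\operatorname{Ai}'(x)+\operatorname{Bi}'(x)}{c_1\operatorname{Ai}(x)+\operatorname{Bi}(x)}=\frac{\operatorname{Bi}'(x)}{\operatorname{Bi}(x)}\bigl(1+o(1)\bigr)\to +\infty,
\]
so in particular $\omega\not\to -\infty$. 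If $c_2=0$, the characteristic reduces to $\omega(x)=\operatorname{Ai}'(x)/\operatorname{Ai}(x)$, which by the asymptotics above tends to $-\infty$. Combining these two cases gives both existence and uniqueness, and identifies the distinguished characteristic as $\omega(x)=\operatorname{Ai}'(x)/\operatorname{Ai}(x)$.

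The one minor obstacle I would address is that~\eqref{eqn:characteris} is a ratio and so can blow up at zeros of the denominator $u=c_1\operatorname{Ai}+c_2\operatorname{Bi}$; strictly speaking I need the characteristic to be defined on a half-line $(x_\star,\infty)$ for the limit $x\to\infty$ to be meaningful. This is handled in both cases by the same asymptotics: when $c_2\neq 0$, the term $c_2\operatorname{Bi}(x)$ eventually dominates and is bounded away from zero for large $x$; when $c_2=0$, $\operatorname{Ai}$ has its largest real zero at $a_1\approx -2.338$ and is strictly positive on $(a_1,\infty)$. Thus in both cases $\omega$ is smooth for all sufficiently large $x$, the asymptotic analysis above applies, and the claim follows. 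I do not anticipate any serious difficulty beyond careful bookkeeping of the Airy asymptotics.
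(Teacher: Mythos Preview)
Your proposal is correct and follows essentially the same approach as the paper: both arguments split according to whether $c_2=0$ or $c_2\neq 0$, invoke the large-$x$ Airy asymptotics (the paper simply cites \cite[Section~9.7(ii)]{NIST:DLMF}) to obtain $\operatorname{Ai}'/\operatorname{Ai}\to -\infty$ and, when $c_2\neq 0$, $\omega\to +\infty$, and then read off existence and uniqueness. Your version is somewhat more detailed---you make the $\mathbb{RP}^1$ parametrization explicit and address well-definedness of the ratio for large $x$---but the underlying argument is the same.
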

\begin{proof}
 For existence, consider the characteristic curves
 \begin{equation}
 \tilde{\omega}(x)=\frac {\operatorname{Ai}'(x)}{\operatorname{Ai}(x)},
 \label{eqn:cch}
 \end{equation}
 which is obtained from~\eqref{eqn:characteris} by setting $c_{2}=0$ and $c_{1}\neq 0$. By~\cite[Section 9.7\,(ii)]{NIST:DLMF},
 \begin{equation}
 \lim_{x\to \infty}\tilde{\omega}(x)=\lim_{x\to \infty}\frac {\operatorname{Ai}'(x)}{\operatorname{Ai}(x)}=-\infty.
 \label{eqn:limbehavior2}
 \end{equation}
 Therefore, among the characteristic curves represented by~\eqref{eqn:cch}, there exists one along which $\omega\to -\infty$ as $x\to \infty$.

 For uniqueness, note that for fixed values of $c_{1}$ and $c_{2}\neq 0$, as a well-defined function of $x$, $\omega\to \infty$ as $x\to \infty$. Thus, combined with~\eqref{eqn:limbehavior2}, we conclude that only one such characteristic curve exists.
\end{proof}
After the change of variable $\omega=-\cot \theta$, we obtain
\begin{equation}
 \cot \theta=-\frac {c_{1}\operatorname{Ai}'(x)+c_{2}\operatorname{Bi}'(x)}{c_{1}\operatorname{Ai}(x)+c_{2}\operatorname{Bi}(x)},
 \label{eqn:xi}
\end{equation}
and the boundary conditions become
\begin{align*}
&F(x,\theta)\to 1\qquad \text{as} \ x\to \infty \ \text{and} \ \theta\to \pi,\\
&F(x,\theta)\to 0\qquad \text{as} \ \theta\to 0 \ \text{with} \ x \ \text{bounded above.}
\end{align*}

\begin{figure}[t]
 \centering
 \subfloat {{\includegraphics[width=0.45\linewidth]{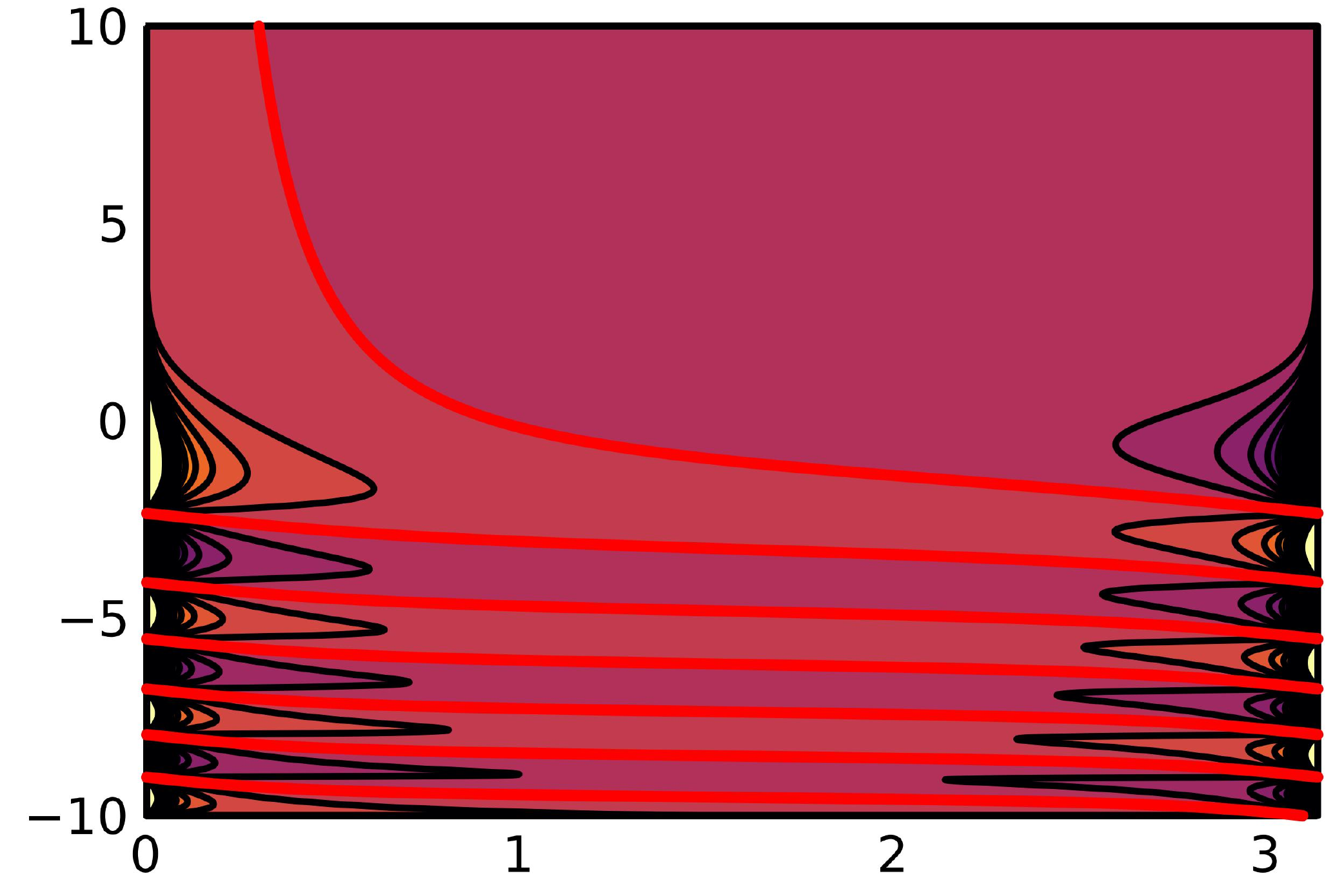} }}%
 \put(-100,-8){\color{black} $\theta$}
 \put(-210,70){\rotatebox{90}{\color{black} $x$}}
 \caption{Contour plot of~\eqref{eqn:xi} with respect to $x$ and $\theta$ for $x\in [-10,10]$ and $\theta\in [0,\pi]$. The red curves correspond to the level zero. The values of $x$ where the red curves cross $\theta = \pi$ coincide with the zeros of $\mathrm{Ai}(x)$.}
 \label{p:pin}
\end{figure}

Figure~\ref{p:pin} shows the contour plot of~\eqref{eqn:xi} with respect to $x$ and $\theta$ for $x\in[-10,10]$ and ${\theta\in[0,\pi]}$. The red curves correspond to the level zero, which occurs when $c_{1}\neq 0$ and $c_{2}=0.$ We can see that there is one and only one red curve along which $\theta\to 0$ as $x\to \infty$. Denote the first real zero (closest to $x=0$) of Ai$(x)$ by $x_{1}\approx -2.33811$. It can be verified from~\eqref{eqn:xi} by setting $c_{2}=0$ and $c_{1}\neq 0$ that $x\to x^{+}_{1}$ as $\theta\to \pi^{-}$. Set $F(x,\omega)$ in the following way:
\begin{equation}
 F(x,\omega)=\begin{dcases}
0, & \omega<\dfrac {\operatorname{Ai}'(x)}{\operatorname{Ai}(x)},\\
1, & \omega>\dfrac {\operatorname{Ai}'(x)}{\operatorname{Ai}(x)}.
 \end{dcases}
 \label{eqn:sol}
\end{equation}
Then~\eqref{eqn:sol} is the solution of~\eqref{eqn:inf} satisfying both boundary conditions~\eqref{bcs1} and~\eqref{bcs2}.
\label{apx:1}

\subsection*{Acknowledgements}

This work is partially supported by NSFDMS-1945652. The authors would like to thank the anonymous referees for their helpful comments and suggestions, which have significantly contributed to the clarity of this paper.

\pdfbookmark[1]{References}{ref}
\LastPageEnding

\end{document}